\title{Some Definability Results in Abstract Kummer Theory}
\theoremstyle{plain}
\newtheorem{theorem}{Theorem}[section]
\newtheorem{lemma}[theorem]{Lemma}
\newtheorem{proposition}[theorem]{Proposition}
\newtheorem{fact}[theorem]{Fact}
\newtheorem*{fact*}{Fact}
\newtheorem{corollary}[theorem]{Corollary}
\newtheorem{claim}{Claim}
\newtheorem*{claim*}{Claim}
\theoremstyle{definition}
\newtheorem{definition}[theorem]{Definition}
\newtheorem*{definition*}{Definition}
\newtheorem*{notation*}{Notation}
\theoremstyle{remark}
\newtheorem{remark}[theorem]{Remark}
\newtheorem*{remark*}{Remark}
\newtheorem{examples}[theorem]{Examples}
\newtheorem*{examples*}{Examples}
\newtheorem{question}[theorem]{Question}
\newtheorem*{question*}{Question}
\newtheorem*{note*}{Note}
\newtheorem{example}{Example}
\newtheorem*{example*}{Example}
\renewcommand{\phi}{\varphi}
\newcommand{\elres}{\succcurlyeq}
\newcommand{\isom}{\cong}
\newcommand{\theorystyle}[1]{\operatorname{#1}}
\newcommand{\ACF}{\theorystyle{ACF}}
\newcommand{\ACFA}{\theorystyle{ACFA}}
\newcommand{\DCF}{\theorystyle{DCF}}
\newcommand{\Th}{\operatorname{Th}}
\newcommand{\pr}{\operatorname{pr}}
\newcommand{\tp}{\operatorname{tp}}
\newcommand{\stp}{\operatorname{stp}}
\newcommand{\RM}{\operatorname{RM}}
\newcommand{\DM}{\operatorname{DM}}
\newcommand{\U}{\operatorname{U}}
\newcommand{\RDM}{\operatorname{RDM}}
\newcommand{\acl}{\operatorname{acl}}
\newcommand{\id}{\operatorname{id}}
\newcommand{\dcl}{\operatorname{dcl}}
\newcommand{\acleq}{\operatorname{acl}^{\operatorname{eq}}}
\newcommand{\N}{\mathbb{N}}
\newcommand{\Z}{\mathbb{Z}}
\newcommand{\Q}{\mathbb{Q}}
\newcommand{\tuple}[1]{\overline{#1}}
\newcommand{\btup}{\tuple{b}}
\newcommand{\dtup}{\tuple{\delta}}
\newcommand{\atup}{\tuple{a}}
\newcommand{\xtup}{\tuple{x}}
\newcommand{\Gal}{\operatorname{Gal}}
\newcommand{\locus}{\operatorname{locus}}
\newcommand{\defnstyle}[1]{\emph{#1}}
\newcommand{\maps}{\rightarrow}
\newcommand{\invlim}{\mathop{\varprojlim}\limits}
\newcommand{\Zar}{{\mathrm{Zar}}}
\newcommand{\G}{\mathbb{G}}
\newcommand{\Gt}{\tilde{G}}
\newcommand{\thetat}{\tilde{\theta}}
\renewcommand{\L}{\mathcal{L}}
\newcommand{\res}{\upharpoonright}
\newcommand{\COMMENTSAI}[1]{}  
\author{Martin Bays\thanks{The first author was supported, and the third author
was partially funded, by MODIG (Projet ANR-09-BLAN-0047)}, Misha
Gavrilovich\thanks{The second author was supported by the FWF through project
AM1202.} and Martin Hils}
\begin{document}

\maketitle

\begin{abstract}
  Let $S$ be a semiabelian variety over an algebraically closed field, and let 
  $X$ be an irreducible subvariety not contained in a translate of a proper 
  algebraic subgroup of $S$. We show that the number of irreducible 
  components of $[n]^{-1}(X)$ is bounded uniformly in $n$, and moreover 
  that the bound is uniform in families $X_t$.

  We prove this by Galois-theoretic methods. This proof allows for a purely
  model theoretic formulation, and applies in the more general context of
  divisible abelian groups of finite Morley rank. In this latter context, we
  deduce a definability result under the assumption of the Definable
  Multiplicity Property (DMP). We give sufficient conditions for finite Morley
  rank groups to have the DMP, and hence give examples where our definability
  result holds.
\end{abstract}

\bibliographystyle{alpha}

\section{Introduction}
Let $S$ be a semiabelian variety over an algebraically closed field of
characteristic $p\geq 0$, and let $X$ be an irreducible subvariety. We say that
$X$ is \defnstyle{Kummer-generic} (in $S$) if $[n]^{-1}(X)$ is irreducible for
all $n$. Here, $[n]:S\rightarrow S$ denotes the multiplication-by-$n$ map.

It is easy to see that, since $S$ has Zariski-dense torsion, a necessary
requirement for $X$ to be Kummer-generic is that it is not contained in any
translate of a proper algebraic subgroup of $S$ --- we call such $X$
\defnstyle{free}.

If $X$ is free, it does not follow that $X$ itself is Kummer-generic. For
example, $X := \{(x,y) \;|\; y = (1+x)^2 \} \subseteq \G_m^2$ is irreducible
and free, but $[2]^{-1}(X) = \{(x,y) \;|\; y^2 = (1+x^2)^2\}$ is not
irreducible, since $(y+1+x^2)(y-(1+x^2)) = y^2 - (1+x^2)^2$.

However, we prove:

\begin{theorem}\label{thm:ag1}
  Suppose $X\subseteq S$ is free. Then for some $n$, any irreducible component
  of $[n]^{-1}(X)$ is Kummer-generic in $S$.
\end{theorem}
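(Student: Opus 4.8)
The plan is to isolate a single Galois-theoretic input — a uniform bound on the number of components of $[n]^{-1}(X)$ — and then deduce the theorem from it by a soft monotonicity argument. Write $f(n)$ for the number of irreducible components of $[n]^{-1}(X)$. Since $[nm]^{-1}(X) = [m]^{-1}([n]^{-1}(X))$, and since $[n]$ is finite and surjective (so all these preimages are equidimensional of dimension $\dim X$), any component $Z$ of $[nm]^{-1}(X)$ has $\overline{[m](Z)}$ irreducible of dimension $\dim X$ inside $[n]^{-1}(X)$, hence equal to a unique component $Y$ of $[n]^{-1}(X)$, with $Z \subseteq [m]^{-1}(Y)$. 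Thus the components of $[nm]^{-1}(X)$ are precisely the components of the various $[m]^{-1}(Y)$, and these are disjoint in $Y$. This yields the additivity $f(nm) = \sum_Y \#\{\text{components of } [m]^{-1}(Y)\}$, the sum over components $Y$ of $[n]^{-1}(X)$, and in particular $f$ is non-decreasing along divisibility: $n \mid N \Rightarrow f(n) \le f(N)$.

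The crux is then the claim that there is a constant $C$ with $f(n) \le C$ for all $n$. Granting it, the theorem follows at once: as $f$ is bounded and non-decreasing along divisibility, the supremum $M = \sup_n f(n) \le C$ is attained at some $n_0$, and for every $m$ we have $n_0 \mid n_0 m$, so $M = f(n_0) \le f(n_0 m) \le M$, whence $f(n_0 m) = f(n_0)$. Plugging this into the additivity formula, $\sum_Y \#\{\text{components of } [m]^{-1}(Y)\} = f(n_0) = \#\{Y\}$, so every summand equals $1$; that is, $[m]^{-1}(Y)$ is irreducible for each component $Y$ of $[n_0]^{-1}(X)$ and each $m$. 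Hence every component of $[n_0]^{-1}(X)$ is Kummer-generic, as required.

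It remains to bound $f$, which is where freeness enters and which I would handle Galois-theoretically. Let $F = k(X)$ be the function field and $a \in S(F)$ the generic point of $X$. Over $\overline{F}$ the fibre $[n]^{-1}(a)$ is a torsor under $S[n]$, and (in characteristic $0$, or for the prime-to-$p$ part in characteristic $p$, where $[n]$ is étale) the components of $[n]^{-1}(X)$ correspond bijectively to the orbits of $\Gal(\overline{F}/F)$ on $[n]^{-1}(a)$. Fixing a compatible system $b_n$ with $[n]b_n = a$, an element $\sigma$ acts on the torsor by the affine map $s \mapsto \rho_n(\sigma)(s) + \chi_n(\sigma)$, where $\rho_n \colon \Gal(\overline{F}/F) \to \Aut(S[n])$ is the representation on torsion and $\chi_n$ is the Kummer cocycle. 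Over $F(S[n])$, where $\rho_n$ is trivial, $\sigma \mapsto \chi_n(\sigma)$ is an honest homomorphism into $S[n]$ with image $V_n$; the translation action of $V_n$ alone has exactly $[S[n]:V_n]$ orbits, and enlarging the acting group only merges orbits, so $f(n) \le [S[n]:V_n]$.

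The main obstacle is therefore the uniform Kummer bound $[S[n]:V_n] \le C$: the division fields $F(S[n], b_n)$ must be maximal up to bounded index. This is exactly where freeness of $X$ is decisive — it says $a$ lies in no proper-algebraic-subgroup coset, so the subgroup it generates is Zariski-dense in $S$, ruling out the systematic index growth behind the example $y=(1+x)^2$. I expect this to be the part requiring genuine work, combining big-monodromy input for $\rho_n$ (here geometric monodromy, since $k$ is algebraically closed) with an independence statement separating the Kummer direction $V_n$ from the torsion field; the reduction and orbit-counting above are comparatively formal. In positive characteristic the inseparable $p$-part of $[n]$ would need separate treatment.
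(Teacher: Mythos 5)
Your formal reduction is correct: the additivity $f(nm)=\sum_Y \#\{\text{components of }[m]^{-1}(Y)\}$, the monotonicity of $f$ along divisibility, and the deduction of the theorem from a uniform bound $f(n)\le C$ are all sound, and this bookkeeping is essentially what the paper does via Lemma~\ref{lem:KgZ} and its derivation of Theorem~\ref{thm:ag1} from Proposition~\ref{prp:main}. One simplification you overlooked: the base field $k$ is algebraically closed, so all torsion of $S$ is $k$-rational; hence $\rho_n$ is trivial, $F(S[n])=F$, the Galois action on the fibre $[n]^{-1}(a)$ is purely by translations through the Kummer homomorphism $\chi_n$, and in fact $f(n)=[S[n]:V_n]$ exactly. (For the same reason your worry about the inseparable part of $[n]$ in characteristic $p$ is moot: one works throughout with the group $S[n]$ of $k$-points and field-theoretic Galois theory, and \'etaleness of $[n]$ is never needed.)

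The genuine gap is that your ``crux claim'' --- the uniform bound $[S[n]:V_n]\le C$ --- is the entire mathematical content of the theorem: it is exactly the non-uniform part of Proposition~\ref{prp:main}, namely that the Kummer image $Z$ (the inverse limit of the $V_n$) has finite index in the Tate module $T$, and you do not prove it; everything you do prove is formal and never uses freeness. Moreover, the strategy you gesture at cannot work as stated: there is no ``big monodromy input for $\rho_n$'' to be had, since $\rho_n$ is trivial, and no torsion field from which to separate the Kummer direction. The missing idea (Claims~\ref{claim:dom} and \ref{clm:maingal} of the paper) is to route the bound through the summation map. Freeness implies $\Sigma\colon X^d\maps S$ (with $d=\dim S$) is dominant, since otherwise the Zariski closures of the iterated sumsets $X+\cdots+X$ would stabilise and produce a proper algebraic subgroup whose coset contains $X$. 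Then, for $a$ generic in $S$ and $V$ an irreducible component of $\Sigma^{-1}(a)$, any $\sigma\in\Gal(k(a)^{alg}/k(a))$ stabilising $V$ has Kummer image $\tilde{\theta}(\sigma)$ equal to a sum of $d$ Kummer images of the coordinates of a generic point of $V$, hence lying in $Z+\cdots+Z=Z$; and since $S$ is trivially Kummer-generic in itself, $\tilde{\theta}$ maps onto $T$, so $[T:Z]$ divides the number of components of $\Sigma^{-1}(a)$, which is finite by dominance. Without this step, or some substitute that genuinely engages freeness, your proposal establishes only the reduction, not the theorem; your own example $y=(1+x)^2$ shows the bound fails without freeness, and nothing in your sketch uses freeness beyond restating its definition.
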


\begin{theorem}\label{thm:ag2}
  Suppose $S\maps T$ is a parametrised family of semiabelian varieties and
  $X\subseteq S$ is a family of subvarieties. Then
  $\{t \;|\; \textrm{$X_t$ is Kummer-generic in $S_t$}\}$ is a constructible
  set.
\end{theorem}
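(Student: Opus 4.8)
The plan is to reduce Kummer-genericity, which quantifies over all $n$, to a \emph{finite} list of irreducibility conditions, using the uniform bound on component numbers as the essential input, and then to observe that each of these conditions, together with freeness, cuts out a constructible subset of $T$.

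First I would set up the standard Kummer-theoretic description of the component count. For a free $X_t$ with generic point $a$ over the base field $k(t)$, the fibre $[n]^{-1}(a)$ is a torsor under the $n$-torsion $S_t[n]$, and $\Gal(\acl(k(t)(a))/k(t)(a))$ acts on it through a homomorphism into $S_t[n]$; the number $c(n,t)$ of irreducible components of $[n]^{-1}(X_t)$ equals the index $[S_t[n] : H_{n,t}]$ of the image $H_{n,t}$. For $m\mid n$ the reduction $S_t[n]\twoheadrightarrow S_t[m]$ carries $H_{n,t}$ onto $H_{m,t}$, so $c(\cdot,t)$ is non-decreasing along divisibility and the $H_{n,t}$ assemble into a closed subgroup $H_t\leq T(S_t):=\invlim_n S_t[n]$ with $[T(S_t):H_t]=\sup_n c(n,t)$. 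The uniform bound on the number of irreducible components (the paper's main Galois-theoretic input) gives an $N$, independent of $t$ on the free locus and of $n$, with $c(n,t)\leq N$; hence $H_t$ is open of index at most $N$.

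Now I would extract a fixed finite set of moduli. Writing $T(S_t)=\prod_\ell T_\ell(S_t)$, for any prime $\ell>N$ the projection $\pi_\ell(H_t)$ is all of $T_\ell(S_t)$, since a proper open subgroup of a free $\Z_\ell$-module has index divisible by $\ell>N$; so no obstruction arises at primes above $N$. For a prime $\ell\leq N$ the index $[T_\ell(S_t):\pi_\ell(H_t)]=\ell^{s}$ satisfies $\ell^{s}\leq N$, whence $s\leq\log_2 N$, and killing $T_\ell(S_t)/\pi_\ell(H_t)$ by $\ell^{s}$ gives $\ell^{s}T_\ell(S_t)\subseteq\pi_\ell(H_t)$, so $c(\ell^{k},t)=\ell^{s}$ for all $k\geq s$; in particular $c(\ell^{k_0},t)=1$ with $k_0:=\lfloor\log_2 N\rfloor$ already forces $c(\ell^{k},t)=1$ for every $k$. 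Finally, since torsion of coprime order contributes independent factors (Goursat for groups of coprime order), $c(n,t)=1$ for all $n$ if and only if $c(\ell^{k},t)=1$ for every prime $\ell$ and every $k$. Combining, a free $X_t$ is Kummer-generic if and only if $[\ell^{k_0}]^{-1}(X_t)$ is irreducible for each of the finitely many primes $\ell\leq N$ --- a finite list $F$ of values of $n$, the same for all $t$.

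It then remains to check two constructibility statements and intersect. First, the free locus is constructible: the subgroup $G_t\leq S_t$ generated by $X_t-X_t$ is obtained after a bounded number ($\leq 2\dim S_t+1$) of sums and differences, so $(G_t)_t$ is a uniformly constructible family and $\dim G_t$ is a constructible function of $t$; freeness is the condition $\dim G_t=\dim S_t$. Second, for each fixed $n\in F$ the locus where the fibre $[n]^{-1}(X_t)$ of the family $[n]^{-1}(X)\to T$ is geometrically irreducible is constructible, by the standard constructibility of the irreducible-fibre locus (EGA IV, 9.7.7). Since Kummer-genericity implies freeness, the Kummer-generic locus equals the free locus intersected with the finitely many loci coming from $F$, hence is constructible. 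The main obstacle is the reduction of the preceding paragraph: making the uniform bound do its work uniformly in $t$, and in particular justifying the Kummer-theoretic description and the pro-$\ell$ stabilisation across all fibres, including the inseparable phenomena arising when $\operatorname{char}k=p$ and $p\mid n$, which must be absorbed into the same uniform bound.
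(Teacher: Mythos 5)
Your proposal is correct and takes essentially the same route as the paper: it uses the uniform bound on the number of irreducible components of $[n]^{-1}(X_t)$ (Proposition~\ref{prp:main}) to reduce Kummer-genericity of a free $X_t$ to finitely many fixed irreducibility conditions, and then combines this with constructibility of irreducibility and of freeness. The only differences are cosmetic: the paper invokes the Nakayama-type Lemma~\ref{lem:Kgprimes} to reduce to the conditions ``$[\ell]^{-1}(X_t)$ irreducible'' for primes $\ell\leq N$, where you instead check $[\ell^{k_0}]^{-1}(X_t)$ via a pro-$\ell$ stabilisation argument, and the paper tests freeness by surjectivity of $\Sigma\colon X_t^{2d}\to S_t$ rather than by the dimension of the group generated by $X_t-X_t$.
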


Here, a family $S\maps T$ of semiabelian varieties is assumed to have uniform
group structure, i.e. a morphism $+ : S\times S \maps S$ restricting to the
group morphisms $+_t : S_t\times S_t \maps S_t$ of the semiabelian varieties
$S_t$.

In fact, we prove Theorem~\ref{thm:ag2} by proving a uniform version of
Theorem~\ref{thm:ag1}, Proposition~\ref{prp:main} below.

Moreover, we prove generalisations of Theorems \ref{thm:ag1} and \ref{thm:ag2}
(Theorems \ref{thm:fmr1} and \ref{thm:fmr2} respectively) in the context of
divisible abelian groups of finite Morley rank.
Theorems \ref{thm:ag1} and \ref{thm:ag2} may be easily derived from these more
abstract counterparts, in fact with the slightly weaker assumption that the
algebraic group be commutative and divisible. Nonetheless, we think it worth
keeping the (purely algebraic) proofs of Theorems \ref{thm:ag1} and
\ref{thm:ag2}.

Versions of Theorem~\ref{thm:ag1} have appeared previously in work on the
model theory of universal covers of commutative algebraic groups, under the
guise of ``the $n=1$ case of the Thumbtack Lemma''. Zilber (\cite{ZCovers})
proves Theorem~\ref{thm:ag1} in the case that $S=\G_m^n$ is an algebraic torus
in characteristic 0, by consideration of divisors on a projective normal model
of the function field of $X$; in \cite{BZCovers} it was noted that this
argument goes through in positive characteristic. The second author
(\cite{GavThesis,GavCoversAZ}) proved Theorem~\ref{thm:ag1} for Abelian
varieties in characteristic 0 by complex analytic and homotopic methods -- in
fact, the proof there proceeds by considering the fundamental group action on
the universal covering space, and does not explicitly use the group structure
on the abelian variety. The first author (\cite{BaysThesis}) gave an
alternative proof appealing to Lang-N\'{e}ron's function field Mordell-Weil
theorem.

Meanwhile, versions of Theorem~\ref{thm:ag2} have arisen in the study of
``green fields'', fields expanded by a predicate for a generic multiplicative
subgroup. In considering the theory of generic automorphisms of green fields,
the third author (\cite{HilsGenAutGreen}) found that he needed
Theorem~\ref{thm:ag2} for tori, and proved it by a finer consideration of
divisors in the style of Zilber's proof of Theorem~\ref{thm:ag1} for tori.
Subsequently, it was observed by Roche that Theorem~\ref{thm:ag2} for tori is
a necessary part of the ``collapse" of the green fields onto so-called
\emph{bad fields }by Baudisch, Martin Pizarro, Wagner and the third author
(\cite{BaHiMaWa09}).

The present work has corresponding applications. Indeed, in his thesis
\cite{RocheThesis}, Roche considers so-called \emph{octarine fields}, certain
expansions of abelian varieties by a predicate for a non-algebraic subgroup, a
context which is similar to bad fields. In order to to be able to perform the
``collapse" in this context, definability of Kummer-genericity (for abelian
varieties in characteristic 0) is needed.

A case of Theorem~\ref{thm:ag1}, for hypersurfaces in tori in characteristic
0, appears in work of Ritt \cite[Section 8]{RittFact} in the context of
factorising exponential sums.

\smallskip

The key idea of our proof is due to Ofer Gabber. In fact, he provided a proof
(sketch) of Theorems~\ref{thm:ag1} and \ref{thm:ag2}, going via (\'{e}tale)
fundamental groups. We worked out the details and simplified his proof. In
particular, we were able to extract the `Galois theoretic' essence of the
arguments and give proofs that do not use algebraic geometry in an essential
way, replacing the use of the \'etale fundamental group of the variety by the
absolute Galois group of the function field of the variety. Galois theory is
available in a first-order structure (\cite{Poi83}), and the
statements and proofs transfer to this abstract model theoretic setting.
Analysing the proofs, one sees that, in model theoretic terms, we use only
that the complete theories $\ACF_p$ of algebraically closed fields have finite
Morley rank and the Definable Multiplicity Property (DMP). Analysing this, we
find that we can prove analogous statements to Theorems~\ref{thm:ag1} and
\ref{thm:ag2} under more general conditions (partially even for type-definable
groups of finite relative Morley rank); we do this in Section \ref{S:fMR},
transferring the relevant portions of Kummer theory to the more abstract
setting.

First, in Section \ref{S:TA-DMP}, we establish a criterion for a group $G$ of
finite Morley rank (maybe with additional structure) to have the DMP which
turns out to be very useful in practice. We show that $G$ has the DMP if and
only if the generic automorphism is axiomatisable in $G$. Indeed, this
equivalence holds for non-multidimensional theories with all dimensions
strongly minimal. This generalises the case of strongly minimal theories which
was proved by Hasson and Hrushovski \cite{HaHr07}.

These results apply to any group of finite Morley rank definable in the theory
$\DCF_0$ of differentially closed fields of characteristic 0
(such groups are (\cite[Proposition~2.4]{KowalskiPillayDGrps}) definably isomorphic
to groups of the form $(G,s)^{\#} = \{ x\in G \;|\; s(x)=\delta x\}$ for
$(G,s)$ an algebraic D-group), as well as to groups definable in compact
complex manifolds (which are known (\cite{PillayScanlonMeroGrps},
\cite{ScanlonNSMeroGrps}) to be definable extensions of definably compact
groups by linear algebraic groups), and we thus obtain Theorems~\ref{thm:ag1}
and \ref{thm:ag2} in these contexts. In the differential case, we use that the
generic automorphism is axiomatisable in $\DCF_0$ (a result of Hrushovski, see
\cite{Bus07}); in the compact complex analytic case, we obtain the
axiomatisability of the generic automorphism using a result of Radin
\cite{Rad04} which asserts that irreducibility is definable in families.

Finally, we would like to mention an instance of the type-definable case. Let
$A$ be an abelian variety which is defined over a non-perfect separably closed
field $K$ of characteristic $p>0$ (of finite degree of imperfection), and let
$A^{\#}$ be the biggest $p$-divisible subgroup of $A(K)$ (equivalently,
$A^{\#}$ may be defined as the smallest type-definable subgroup which is
Zariski dense in $A$). Then Theorem~\ref{thm:ag1} holds in $A^{\#}$ (for
relatively definable subsets), since $A^{\#}$ has relative finite Morley rank
(\cite{BeBoPiSSharp}).

We are naturally very grateful to Ofer Gabber for sharing his insights with
us. This interaction, and collaboration between the authors, occurred during a
visit by the second author to the IHES, and we would like to thank them for
their hospitality and providing an environment conducive to such interaction.

We would also like to thank Zo\'{e} Chatzidakis, Philipp Habegger, Rahim Moosa,
and Giuseppina Terzo for helpful remarks.

\section{Kummer theory}\label{S:Kummer}

We first reformulate Kummer genericity as a condition on the image of the
absolute Galois group of a function field on the product of the Tate modules.

Let $S$ be a semiabelian variety over an algebraically closed field. By
standard results, $S$ is divisible.

Let $l\in\N$. The l-torsion $S[l]$ of $S$ is finite by l-divisibility of $S$
and dimension concerns. So $S[l]$ is a finite abelian group of exponent $l$,
hence isomorphic to some $(\Z/l\Z)^{k_l}$ if $l$ is a prime number.

Let $T = T_S$ be the inverse limit of the torsion
    \[T = \invlim_n S[n]\]
with respect to the multiplication-by-m maps
    \[[m] : S[mn] \maps S[n] .\]
Splitting into primary components, we have an isomorphism of profinite groups:
    \[T \isom \Pi_l \Z_l^{k_l}\]

(Although we will not need this, in fact $k_l$ depends only on whether $l=p$:
if $0\maps T \maps S \maps A \maps 0$ is exact with $A$ an abelian variety and
$T$ a torus, then $k_l = 2\dim(A) + \dim(T)$ if $l\neq p$, and $0\leq k_p \leq
\dim(A)$. It is worth noting that we need no assumption on $A$ being ordinary
in order to prove our results.)

Let $X\subseteq S$ be an irreducible subvariety, let $K$ be algebraically
closed such that $X$ and $S$ are defined over $K$, and let $b\in X$ be a
generic point over $K$.

Let $G := \Gal(K(b)^{alg}/K(b))$.

The Kummer pairing provides a continuous homomorphism
\begin{align*}
    \theta :\,\,& G \maps T \\
	  & \sigma \mapsto ( \sigma(b_n) - b_n )_n
\end{align*}
where the $b_n$ are arbitrary such that  $m b_{nm} = b_n$  and  $b_1=b$; this
is a well-defined homomorphism, since the torsion of $S$ is algebraic and hence
contained in $S(K)$.

Let $Z = Z_X \leq T$ be the image of $\theta$. This does not depend on the
choices of $K$ and $b$. It follows from continuity of $\theta$ that $Z$ is a
closed subgroup of $T$.

Say $X$ is \emph{$n$-Kummer-generic} in $S$ if and only if $[n]^{-1}(X)$ is
irreducible.

\begin{lemma}\label{lem:KgZ}
  $X$ is $n$-Kummer-generic in $S$ iff $Z+nT=T$; hence $X$ is
  Kummer-generic in $S$ iff $Z = T$.
\end{lemma}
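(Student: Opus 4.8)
The plan is to relate the irreducibility of $[n]^{-1}(X)$ to a Galois-theoretic/field-theoretic statement about the function field $K(b)$, and then translate that statement into the condition $Z + nT = T$ via the Kummer pairing $\theta$. The key observation is that the number of irreducible components of $[n]^{-1}(X)$, for $X$ defined over the algebraically closed field $K$ with generic point $b$, is governed by the degree of the field extension obtained by adjoining an $n$-th division point of $b$. More precisely, fix $b_n \in S$ with $[n]b_n = b$; then the components of $[n]^{-1}(X)$ over $K$ are in bijection with the orbits of $G = \Gal(K(b)^{alg}/K(b))$ acting on the (finite) set of preimages $[n]^{-1}(b)$, since $X$ is irreducible means $b$ is a generic point and the components of the preimage are the $K$-loci of the various $K(b)$-conjugates of the $b_n$. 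Thus $[n]^{-1}(X)$ is irreducible precisely when $G$ acts transitively on $[n]^{-1}(b) = b_n + S[n]$.

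Next I would express this transitivity in terms of $\theta$. The fibre $[n]^{-1}(b)$ is a torsor under $S[n]$, and $\sigma \in G$ sends $b_n \mapsto \sigma(b_n) = b_n + (\sigma(b_n) - b_n)$, where $\sigma(b_n) - b_n \in S[n]$. Since $S[n] = T/nT$ naturally (the quotient map $T \to S[n]$ being $(t_m)_m \mapsto t_n$, with kernel $nT$), the element $\sigma(b_n) - b_n$ is precisely the image in $T/nT$ of $\theta(\sigma) = (\sigma(b_m) - b_m)_m \in T$. Hence the $G$-orbit of $b_n$ corresponds to the image of $Z = \theta(G)$ under the projection $T \twoheadrightarrow T/nT \isom S[n]$. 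The action of $G$ on $[n]^{-1}(b)$ is transitive if and only if this image is all of $S[n]$, i.e. if and only if the composite $Z \hookrightarrow T \twoheadrightarrow T/nT$ is surjective, which is exactly the condition $Z + nT = T$. The final claim about full Kummer-genericity then follows by taking the intersection over all $n$: $Z = T$ iff $Z + nT = T$ for all $n$, using that $Z$ is closed and $\{nT\}_n$ form a basis of neighbourhoods of $0$ in the profinite group $T$.

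The step I expect to require the most care is the first one, namely justifying rigorously that irreducible components of $[n]^{-1}(X)$ correspond to $G$-orbits on the fibre $[n]^{-1}(b)$ over the generic point. This rests on the fact that $b$ is generic in $X$ over $K$ and that $[n]$ is a finite étale (or at least generically finite) cover away from torsion-related ramification; one must check that no collapsing or spreading of components occurs when passing from the generic fibre to the whole preimage, which is where genericity of $b$ and the field-theoretic interpretation of components (as corresponding to the factorisation of $K(b)$ in $K([n]^{-1}(X))$) do the real work. The remaining identification $T/nT \isom S[n]$ and the surjectivity reformulation are essentially formal consequences of the definitions of $T$ and $\theta$, so the substance of the proof is concentrated in connecting the geometry of the cover to the Galois action on the function field.
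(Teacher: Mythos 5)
Your strategy is in essence the paper's own: both proofs reduce irreducibility of $[n]^{-1}(X)$ to transitivity of the Galois action on the $n$-division points of the generic point $b$, and then translate this, via the Kummer map, into surjectivity of $Z \maps T/nT$, i.e.\ $Z+nT=T$. The formal parts of your plan are correct: the projection $(t_m)_m \mapsto t_n$ from $T$ onto $S[n]$ is surjective by divisibility and has kernel exactly $nT$ (given $t_n=0$, the element $s_m := t_{nm}$ defines a preimage under multiplication by $n$); the $G$-orbit of $b_n$ is $b_n$ plus the image of $Z$ in $S[n]$; and the ``hence'' clause does follow from closedness of $Z$ together with the fact that the subgroups $nT$ form a neighbourhood basis of $0$ in $T$.

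The step you defer, however --- that irreducible components of $[n]^{-1}(X)$ biject with $G$-orbits on the set-theoretic fibre $[n]^{-1}(b)$ --- is precisely where the paper's geometric work lies, and the justification you sketch is not adequate as stated. \'{E}taleness of $[n]$ fails in characteristic $p$ when $p \mid n$ (there $[n]$ is inseparable and the scheme-theoretic generic fibre is non-reduced), yet the lemma is needed in all characteristics; and mere generic finiteness of $[n]$, for a general morphism, does not exclude the failure mode that actually matters: an irreducible component of $[n]^{-1}(X)$ lying over a proper closed subvariety of $X$, hence missing the fibre over $b$ entirely. What must be proved is that every component dominates $X$; granted that, its generic point is $K$-conjugate to a point of $[n]^{-1}(b)$, and conjugacy over $K$ of two fibre points is automatically conjugacy over $K(b)$, since any $K$-automorphism matching them fixes $b=[n](b_n+\zeta)$. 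The paper secures domination in all characteristics by combining generic finiteness with the group structure: $[n]$ is finite over a dense open $U$, and covering $S$ by translates of $U$ shows $[n]$ is a closed map; hence some component $Y$ has $[n]Y = X$, and then any other component, having its generic point inside some $Y+\zeta$ with $\zeta\in S[n]$, equals $Y+\zeta$. This one argument yields both the correspondence you want and the explicit decomposition of $[n]^{-1}(X)$ into torsion translates of $Y$. (Flatness of $[n]$ would serve equally well.) So your plan is sound and parallel to the paper's proof, but this input --- not \'{e}taleness, and not generic finiteness alone --- is the genuinely missing piece.
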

\begin{proof}
  The map $[n]$ is closed - indeed, since $[n]$ generically has finite fibres,
  there exists an open $U\subseteq S$ such that $[n]\restriction_{[n]^{-1}(U)}$
  is finite in the sense of algebraic geometry, and hence closed; covering $S$
  with translates of $U$, we see that $[n]$ is closed.

  It follows that some irreducible component $Y \subseteq [n]^{-1}X$ is such
  that $[n]Y = X$. If $Y'$ is another irreducible component, by considering
  its generic we see that $Y' \subseteq Y+\zeta\subseteq [n]^{-1}X$ for some
  $\zeta\in S[n]$; hence $Y'=Y+\zeta$. So $\{ Y + \zeta | \zeta \in S[n] \}$
  is the irreducible decomposition of $[n]^{-1}X$.

  Let $b$ be generic in $X$ over $K$ and let $b_n\in Y$ be such that $nb_n=b$,
  so $b_n$ is generic in $Y$ over $K$. Then
  \begin{align*}
    &\,\,\,\, Z+nT=T\\
    \Leftrightarrow &\,\,\,\, b_n \text{ is conjugate over }K
    \text{ to }b_n+\zeta\text{ for all }\zeta\in S[n]\\
    \Leftrightarrow &\,\,\,\, Y = Y+\zeta\text{ for all }\zeta\in S[n]\\
    \Leftrightarrow &\,\,\,\, [n]^{-1}X\text{ is irreducible.}
  \end{align*}

  The last equivalence is by our description of the decomposition.\end{proof}

\begin{lemma}\label{lem:Kgprimes}
  $X$ is Kummer-generic in $S$ iff $X$ is $l$-Kummer-generic in $S$
  for all primes $l$.
\end{lemma}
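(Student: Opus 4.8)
By Lemma~\ref{lem:KgZ}, the statement is equivalent to the following purely group-theoretic assertion about the closed subgroup $Z\leq T$: we have $Z=T$ if and only if $Z+lT=T$ for every prime $l$. The forward implication is immediate, since $Z=T$ trivially yields $Z+lT=T$. So the entire content lies in the reverse implication, and the plan is to exploit that $T$ is a profinite abelian group and $Z$ is closed (as noted after the definition of $\theta$), by passing to the quotient.

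Assume $Z+lT=T$ for all primes $l$, and set $Q:=T/Z$. Since $Z$ is a closed (hence normal, as $T$ is abelian) subgroup of the profinite group $T$, the quotient $Q$ is again a profinite abelian group. Applying the projection $\pi\colon T\maps Q$ to the hypothesis $Z+lT=T$, the image of $Z$ is trivial while $\pi(lT)=lQ$, so that $\pi(Z+lT)=lQ$; comparing with $\pi(T)=Q$ gives $lQ=Q$ for every prime $l$. As $n$-divisibility for $n=\prod_i l_i$ follows by composing the $l_i$-divisibilities (because $lmQ=l(mQ)=lQ=Q$), the group $Q$ is divisible.

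It then remains to observe that a divisible profinite abelian group is trivial. Indeed, any continuous surjection of $Q$ onto a finite group $F$ exhibits $F$ as divisible, and a finite divisible abelian group is trivial; since a profinite group all of whose finite continuous quotients are trivial is itself trivial, we conclude $Q=0$, that is, $Z=T$. I expect the only point requiring any care to be the bookkeeping in the second step --- checking that the hypothesis over primes alone already forces full divisibility of the quotient, after which the triviality of $Q$ is a standard profinite fact. Note that this argument is structure-agnostic: it uses only that $T$ is profinite abelian and $Z$ closed, making no appeal to the explicit decomposition $T\isom\Pi_l\Z_l^{k_l}$.
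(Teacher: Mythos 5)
Your proof is correct, and it takes a genuinely different route from the paper's. After the common reduction via Lemma~\ref{lem:KgZ} to the purely group-theoretic statement that $Z=T$ iff $Z+lT=T$ for all primes $l$, the paper splits into $l$-primary components $Z_l\leq T_l\isom\Z_l^{k_l}$ and runs a determinant (Nakayama-type) argument: $Z_l+lT_l=T_l$ forces $Z_l$ to contain the columns of a matrix congruent to the identity modulo $l$, whose determinant is then a unit in $\Z_l$, so $Z_l=T_l$ for each $l$, whence $Z=T$. You instead pass to the quotient $Q=T/Z$, which is profinite because $Z$ is closed, observe that pushing the hypothesis through the projection $T\maps Q$ gives $lQ=Q$ for every prime $l$ and hence $nQ=Q$ for all $n$ by iteration, and then invoke the fact that a divisible profinite abelian group is trivial. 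Each of your steps is sound: a finite continuous quotient of a divisible group is divisible, a finite divisible abelian group $F$ is trivial (since $|F|\cdot F=0$), and a profinite group all of whose finite continuous quotients are trivial is itself trivial, its open normal subgroups intersecting in the identity. What your approach buys is generality and economy: it uses only that $T$ is profinite abelian and $Z$ closed, avoiding the isomorphism $T\isom\Pi_l\Z_l^{k_l}$, the fact (used implicitly by the paper when it ``takes primary components of $Z$'') that a closed subgroup of a profinite abelian group decomposes into its primary parts, and any finite-generation hypothesis on the $T_l$. What the paper's computation buys is the finer prime-local equivalence $Z+lT=T\Leftrightarrow Z_l=T_l$, though that extra precision is not needed for the lemma as stated.
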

\begin{proof}
  By Lemma~\ref{lem:KgZ}, $X$ is $l$-Kummer-generic iff $Z+lT=T$;
  taking $l$-primary components $Z_l$ of $Z$ and $T_l$ of $T$, this is
  equivalent to $Z_l+lT_l=T_l$. But $T_l \isom \Z_l^{k_l}$, and $Z_l$ is a
  closed subgroup and hence a $\Z_l$-submodule, so $Z_l+lT_l=T_l$ iff
  $Z_l=T_l$ (to see this: note $Z_l+lT_l=T_l$ implies that, with respect to
  the isomorphism $T_l \isom \Z_l^{k_l}$, the submodule $Z_l$ contains the
  columns of a matrix equal modulo $l\Z_l$ to the identity matrix; but this
  matrix has determinant in $\Z_l^* = \Z_l\setminus l\Z_l$, hence is
  invertible in $\operatorname{Mat}_{k_l}(\Z_l)$). The lemma follows easily.
\end{proof}

In the next section, we prove:
\begin{proposition}\label{prp:main}
  Let $X_t \subseteq S_t$ be a family of free irreducible subvarieties of a
  family of semiabelian varieties. Then $Z_{X_t}$ is of finite index in
  $T_{S_t}$, and moreover this index is bounded.
\end{proposition}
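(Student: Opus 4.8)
The plan is to reduce the Proposition to a uniform bound on the number of irreducible components of $[n]^{-1}(X)$, and then to establish that bound one prime at a time. Since $T$ is profinite and $Z$ is closed, and since $\{nT\}_n$ is a cofinal system of open neighbourhoods of $0$, we have $Z=\bigcap_n(Z+nT)$, whence $[T:Z]=\sup_n[T:Z+nT]$, the supremum being taken along divisibility. By the decomposition established in the proof of Lemma~\ref{lem:KgZ}, the stabiliser of the component $Y$ is exactly $\bar Z_n:=(Z+nT)/nT\leq S[n]$, so $[T:Z+nT]=[S[n]:\bar Z_n]$ is precisely the number of irreducible components of $[n]^{-1}(X)$. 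Splitting into primary parts via $T\isom\prod_l\Z_l^{k_l}$ gives $[T:Z]=\prod_l[T_l:Z_l]$, so it suffices to prove, uniformly in $t$: (a) that $[T_l:Z_l]$ is finite for every prime $l$; and (b) that $Z_l=T_l$, i.e.\ that $X$ is $l$-Kummer-generic, for all but finitely many $l$.

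The engine for both is Kummer duality. Fix $l\neq p$, so that $[l]$ is \'etale. By Lemma~\ref{lem:KgZ}, $X$ fails to be $l$-Kummer-generic precisely when $\bar Z_l$ is a proper subgroup of $S[l]$, and by Pontryagin duality on the finite group $S[l]$ this happens exactly when some nontrivial character of $S[l]$ is trivial on $\bar Z_l$. Such characters are identified, via the Weil pairing, with the $l$-torsion of the dual semiabelian variety $\hat S$, and ``$\chi$ trivial on $\bar Z_l$'' becomes ``$\chi\res_X$ becomes trivial after pullback along $X\hookrightarrow S$'', i.e.\ $\chi\res_X$ is an $l$-th power in $K(X)^*$ (for the multiplicative part, $\chi$ is a genuine character $\prod x_i^{a_i}$ and this is the literal statement; in general one works in $\operatorname{Pic}$). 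The freeness hypothesis is exactly the assertion that no nontrivial $\chi$ has constant, equivalently trivial-divisor, pullback to $X$, since otherwise $X$ would lie in a coset of $\ker\chi$.

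The core is then a finiteness statement about this family of characters. For the multiplicative part, the map $\chi\mapsto\operatorname{div}(\chi\res_X)$ sends the finitely generated lattice $\operatorname{Hom}(S,\G_m)$ into $\operatorname{Div}(X)$ on a normal projective model, and freeness makes this a homomorphism into a free abelian group with trivial kernel. Since an injection of a finitely generated group into a free abelian group has only finitely many elementary divisors $d_i$, for every $l\nmid\prod_i d_i$ the condition $\operatorname{div}(\chi\res_X)\in l\cdot\operatorname{Div}(X)$ forces $\chi\in l\cdot\operatorname{Hom}(S,\G_m)$; this yields (b) for such $l$, and for the finitely many remaining primes the same elementary divisors bound $[T_l:Z_l]$, giving (a). The contribution of the abelian-variety part is controlled dually: freeness makes the induced map $\operatorname{Alb}(\tilde X)\maps A$ surjective, so the restriction $\hat A=\operatorname{Pic}^0(A)\maps\operatorname{Pic}^0(\tilde X)$ has finite kernel, whence $\hat A[l]\maps\operatorname{Pic}^0(\tilde X)[l]$ is injective for all $l$ outside the finite set of primes dividing the order of that kernel and the relevant degrees. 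The inseparable prime $l=p$ is handled separately, using $0\leq k_p\leq\dim A$.

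Finally, uniformity in $t$ is obtained by spreading out over the base $T$. The degree of $X_t$ in a fixed projective embedding, the rank of $\operatorname{Hom}(S_t,\G_m)$, the elementary divisors of the divisor map, and the order of the Albanese kernel are all constructible in $t$, so after stratifying $T$ into finitely many constructible pieces the exceptional set of primes and all the indices above become uniformly bounded. I expect the main obstacle to be precisely this uniform control of the abelian-variety characters -- keeping the duality-and-degree argument, and the resulting bounds, uniform as $\hat A_t[l]$ both varies with $t$ and grows with $l$ -- rather than the essentially lattice-theoretic multiplicative case, which is transparent.
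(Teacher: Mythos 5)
Your route is genuinely different from the paper's --- it is the divisor-theoretic method of Zilber and Hils for tori, extended by Albanese--Picard duality on the abelian part --- and its two pillars are individually sound for primes $l$ different from the characteristic: the reduction of $[T:Z]$ to counting components of $[n]^{-1}(X)$ prime by prime, the elementary-divisor argument for $\operatorname{Hom}(S,\G_m)\maps\operatorname{Div}(\tilde{X})$, and the finiteness of $\ker(\hat{A}\maps\operatorname{Pic}^0(\tilde{X}))$ dual to surjectivity of $\operatorname{Alb}(\tilde{X})\maps A$. The gap is in how you split the character group of $S[l]$ into a ``multiplicative part'' ($\operatorname{Hom}(S,\G_m)$ mod $l$) and an ``abelian part'' ($\hat{A}[l]$); this d\'evissage is wrong for non-split $S$, and the Proposition concerns arbitrary families of semiabelian varieties. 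Writing $0\maps\G_m^t\maps S\maps A\maps 0$ with extension class $c:\Z^t\maps\hat{A}$ (via $\operatorname{Ext}^1(A,\G_m)\isom\hat{A}$), a character $v\in\Z^t$ of the toric part extends to a homomorphism $S\maps\G_m$ iff $c(v)=0$, so $\operatorname{Hom}(S,\G_m)\isom\ker(c)$, which is trivial when $c$ is injective; yet $\operatorname{Hom}(S[l],\mu_l)$ still surjects onto $\operatorname{Hom}(\mu_l^t,\mu_l)\isom(\Z/l\Z)^t$. The characters you are missing correspond to pairs $(v,y)$ with $y\in\hat{A}$ and $ly=c(v)$, i.e.\ to $l$-division points of the extension class; they are neither restrictions of global characters of $S$ nor elements of $\hat{A}[l]$, so neither of your two arguments sees them. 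Moreover, even in the split case $S\isom\G_m^t\times A$ your two arguments only show that $\bar Z_l$ projects onto $\mu_l^t$ and onto $A[l]$: a character killing $\bar Z_l$ need not be pure, and a subgroup of $\mu_l^t\times A[l]$ (e.g.\ a graph) can have full projections without being everything, so the mixed characters $\chi_1\otimes\chi_2$ require a joint argument that you never give. Repairing all this means replacing $\operatorname{Alb}(\tilde{X})$ by Serre's generalized Albanese (itself a semiabelian variety) surjecting onto $S$ and dualizing in the category of 1-motives; that is a missing idea, not a detail.

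There are two further gaps. First, $l=p$: your engine is Kummer duality through $\mu_l$ and the Weil pairing, which is unavailable in characteristic $p$ (where $\mu_p$ has no nontrivial $K^{alg}$-points), and ``handled separately, using $0\leq k_p\leq\dim A$'' is not an argument --- knowing that $T_p\isom\Z_p^{k_p}$ with $k_p$ small does not make $[T_p:Z_p]$ finite, and the statement of the Proposition includes the $p$-primary part. Second, the uniformity in $t$ is asserted (constructibility of the elementary divisors and of the order of the Albanese kernel) rather than proved, and as you yourself note this is exactly where your approach is fragile. For contrast, the paper disposes of all three issues at once by a soft Galois argument: freeness makes the summation map $\Sigma:X^d\maps S$ dominant; for $a$ generic, the Kummer map on $\Gal(K(a)^{alg}/K(a))$ is onto $T$ (because $S$ is Kummer-generic in itself), while the stabiliser of any irreducible component of $\Sigma^{-1}(a)$ maps into $Z$; since the Galois action on these components is transitive, $[T:Z]$ divides their number, which is bounded uniformly in families. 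That argument is characteristic-free, never mentions the extension class, and yields the family bound for free.
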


\begin{proof}[Proof of Theorem~\ref{thm:ag1} from Proposition~\ref{prp:main}]
  By Proposition~\ref{prp:main} with a constant family, $Z$ is of finite index
  in $T$, say $m$. So $m$ annihilates $T/Z$, hence $mT \leq Z$. If $Y$ is an
  irreducible component of $[m]^{-1}X$, it follows that $Z_Y=T$, i.e. that $Y$
  is Kummer-generic in $S$.
\end{proof}

\begin{proof}[Proof of Theorem~\ref{thm:ag2} from Proposition~\ref{prp:main}]
  It is clear that Kummer-genericity implies irreducibility. Irreducibility is
  a constructible condition - the set of $t$ for which $X_t$ is irreducible
  is constructible - so we may assume that every $X_t$ in our family is
  irreducible.

  Since the torsion of $S$ is Zariski-dense, Kummer-genericity implies
  freeness. Freeness is also a constructible condition -- indeed, by Claim
  \ref{claim:dom} below, $X\subseteq S$ is free iff the summation map
  $\Sigma:X^{2d}\maps S$ is surjective, where $d=\dim(S)$. So we may assume
  that for all $t$, $X_t$ is free in $S_t$.

  Irreducibility is a constructible condition, hence so is
  $l$-Kummer-genericity. By Lemma~\ref{lem:Kgprimes} and
  Proposition~\ref{prp:main}, for Kummer-genericity we need check only
  finitely primes $l$; hence, Kummer-genericity is also a constructible
  condition.
\end{proof}

\section{Proof of Proposition~\ref{prp:main}}\label{S:Proof2.3}
We prove this for a fixed $X\subseteq S$, and then note that the proof gives
uniformity in families.

Let $d := \dim(S)$.

\begin{claim}   \label{claim:dom}
 The summation map
  \[\Sigma : X^d \maps S\]
  is dominant.
\end{claim}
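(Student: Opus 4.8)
The plan is to study the increasing chain of sumsets and invoke freeness through a stabiliser argument. For $k\geq 1$ let $Y_k := \overline{\Sigma(X^k)}$ denote the Zariski closure of the image of the $k$-fold summation map $X^k\maps S$; each $Y_k$ is irreducible, being the closure of the image of an irreducible variety under a morphism. Since addition is a morphism and translation by a point is an isomorphism, one checks that $Y_{k+1}=\overline{Y_k+X}$. The claim asserts precisely that $Y_d=S$, i.e. that $\dim Y_d=d$.

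First I would record that the dimensions $d_k:=\dim Y_k$ form a non-decreasing sequence bounded above by $d$: fixing any $x_0\in X$, the translate $Y_k+x_0$ is an irreducible subvariety of $Y_{k+1}$ of dimension $d_k$, so $d_{k+1}\geq d_k$.

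The heart of the argument is to show that this sequence strictly increases until it reaches $d$; equivalently, that $d_{k+1}=d_k$ forces $Y_k=S$. Suppose $d_{k+1}=d_k$. Fixing $x_0\in X$, the inclusion $Y_k+x_0\subseteq Y_{k+1}$ is an inclusion of irreducible varieties of equal dimension, hence an equality $Y_{k+1}=Y_k+x_0$. For an arbitrary $x\in X$ we likewise have $Y_k+x\subseteq Y_{k+1}=Y_k+x_0$, and comparing dimensions again yields $Y_k+x=Y_k+x_0$, so $x-x_0$ lies in the stabiliser $H:=\operatorname{Stab}(Y_k)=\{s\in S\mid s+Y_k=Y_k\}$. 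Thus $X\subseteq x_0+H$. Since $H$ is a closed algebraic subgroup of $S$, freeness of $X$ forces $H=S$, whence $Y_k=S$ and $d_k=d$.

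Finally I would assemble the dimension count. Freeness excludes $\dim X=0$ (when $\dim S\geq 1$ a point $a$ lies in the translate $a+\{0\}$ of the proper trivial subgroup, so is not free), giving $d_1=\dim X\geq 1$. Combined with the previous step — as long as $d_k<d$ we have $d_{k+1}\geq d_k+1$ — a straightforward induction yields $d_k\geq\min(d,k)$, and in particular $d_d=d$, so $\Sigma:X^d\maps S$ is dominant. The main obstacle is the stabiliser step: correctly deducing from the stalling of dimensions that $X$ sits inside a single coset of the closed subgroup $\operatorname{Stab}(Y_k)$, so that freeness can be applied. The identity $Y_{k+1}=\overline{Y_k+X}$ and the interplay between closures and images under the addition morphism require a little care but are routine.
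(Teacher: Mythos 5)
Your proof is correct, and at the level of strategy it matches the paper's: both arguments track the Zariski closures of the iterated sumsets and show, via freeness, that their dimension strictly increases until it reaches $d=\dim S$. The mechanism at the crucial step (a stall in dimension) is genuinely different, though. The paper first translates $X$ so that $0\in X$; the closures then form a nested chain, a stall forces the closure $X_k^{\Zar}$ to be closed under addition, and hence --- being irreducible, Zariski closed and containing $0$ --- an algebraic subgroup containing $X$, contradicting freeness outright. You do not translate; instead you observe that a stall forces all translates $Y_k+x$ ($x\in X$) to coincide, so that $X$ lies in the single coset $x_0+\operatorname{Stab}(Y_k)$, and freeness forces $\operatorname{Stab}(Y_k)=S$, whence $Y_k=S$. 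What your route buys is a more elementary key lemma: the stabiliser of a closed set is manifestly a closed algebraic subgroup, whereas the paper implicitly appeals to the (standard, but not completely trivial) fact that an irreducible closed subsemigroup of an algebraic group containing the identity is a subgroup. What the paper's normalisation buys is economy: with $0\in X$, monotonicity of the chain and the containment $X\subseteq X_k^{\Zar}$ are immediate. Note also that your explicit base case $\dim X\geq 1$ (derived from freeness) is genuinely needed --- the strict-increase step alone only yields that $\Sigma:X^{d+1}\maps S$ is dominant --- and the paper leaves this point implicit.
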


\begin{proof}
  It suffices to show that if $k\leq d$ and the image of the $k$-ary summation
  map $X_k := \Sigma_1^k(X^k)$ has dimension $\dim(X_k)<d$, then $\dim(X_k+X)
  > \dim(X_k)$. Translating, we may assume $0\in X$. By irreducibility it is
  then enough to see that, taking Zariski closures, $(X_k+X)^{\Zar} \neq
  X_k^{\Zar}$; but else, we would have $(X_k+X_k)^{\Zar} = X_k^{\Zar}$, whence
  $X_k^{\Zar}$ is a (proper) subgroup, contradicting freeness of $X$.
\end{proof}

Let $K$ be an algebraically closed field over which $X$ and $S$ are defined.

Let $a\in S$ be generic over $K$.

Let $\Gt := \Gal(K(a)^{alg}/K(a))$.

Let $P$ be the set of (absolutely) irreducible components of $\Sigma^{-1}(a)$.

$\Gt$ acts naturally on $P$. The action is transitive, since $X^d$ is
irreducible and, by smoothness of $S$ and the dimension theorem, all
irreducible components of $\Sigma^{-1}(a)$ have full dimension $\dim(X^d) -
\dim(S)$.

Let $Z=Z_X\leq T_S=T$, defined as above.

\providecommand{\Stab}{\operatorname{Stab}}
\begin{claim}\label{clm:maingal}
  Let $V\in P$. The map $\thetat$ : $\Gt \maps T$
  \[\thetat : \sigma \mapsto ( \sigma(a_n) - a_n )_n\]
  (where $(a_n)$ is such that $m a_{nm} = a_n$ and $a_1=a$) induces a
  group epimorphism
  \[\Gt \maps T/Z\]

  whose kernel contains $\Stab_{\Gt}(V)$; hence the index of $Z$ in $T$ divides
  $|\Gt/\Stab_{\Gt}(V)| = |P|$, and in particular is finite.
\end{claim}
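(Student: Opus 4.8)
The plan is to verify the two asserted properties of the composite homomorphism $q\circ\thetat:\Gt\maps T/Z$ (where $q:T\maps T/Z$ is the quotient map) separately --- that it is surjective, and that its kernel contains $\Stab_{\Gt}(V)$ --- and then read off the index statement by elementary group theory. For the final arithmetic: once $q\circ\thetat$ is a surjection with $\Stab_{\Gt}(V)\leq\ker(q\circ\thetat)$, the normal subgroup $\ker(q\circ\thetat)$ sits between $\Stab_{\Gt}(V)$ and $\Gt$, so $[\Gt:\ker(q\circ\thetat)]$ divides $[\Gt:\Stab_{\Gt}(V)]$; the former equals $|T/Z|=[T:Z]$ by surjectivity, and the latter equals $|P|$ by the orbit-stabiliser theorem applied to the transitive action of $\Gt$ on the finite set $P$. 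Since all components are $\Gt$-conjugate and $\ker(q\circ\thetat)$ is normal, it suffices to prove $\Stab_{\Gt}(V)\leq\ker(q\circ\thetat)$ for a single conveniently chosen $V$, which I take to be the component whose generic point is a tuple $\btup=(b^{(1)},\dots,b^{(d)})$ that is generic in $X^d$ over $K$ (so that $a=\Sigma(\btup)$ is generic in $S$ over $K$ and each $b^{(i)}$ is generic in $X$ over $K$).

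For surjectivity I would observe that, taking $X=S$ in the construction of Section~\ref{S:Kummer}, the map $\thetat$ is precisely the Kummer map $\theta$ attached to the generic point $a$ of $S$ regarded as a subvariety of itself. Since $[n]^{-1}(S)=S$ is irreducible for every $n$, the variety $S$ is trivially Kummer-generic in itself, so Lemma~\ref{lem:KgZ} gives $Z_S=T$; that is, $\thetat$ is already surjective onto $T$. Composing with $q$ then yields surjectivity of $q\circ\thetat$ onto $T/Z$.

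The substance is in showing $\Stab_{\Gt}(V)\leq\ker(q\circ\thetat)$, i.e. that $\thetat(\tau)\in Z$ for every $\tau$ fixing $V$. Writing $E:=K(\btup)$, we have $K(a)\subseteq E$, and the restriction map $\rho:\Gal(E^{alg}/E)\maps\Gt$ has image exactly $\Stab_{\Gt}(V)$: indeed $\btup$ is generic in $V$ over $K(a)^{alg}$ (by a transcendence-degree count), so the field of definition of $V$ is $E_0:=K(a)^{alg}\cap E$, whence $\Stab_{\Gt}(V)=\Gal(K(a)^{alg}/E_0)$, and the restriction $\Gal(E^{alg}/E)\maps\Gal(K(a)^{alg}/E_0)$ is surjective by standard Galois theory. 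So it is enough to show $\thetat(\rho(\sigma))\in Z$ for all $\sigma\in\Gal(E^{alg}/E)$. Here I would exploit that the value of a Kummer pairing is independent of the chosen compatible division points, since any two choices differ by torsion, which lies in $S(K)$ and is fixed by $\sigma$. Choosing, for each $i$, compatible division points $b^{(i)}_n$ of $b^{(i)}$ and setting $c_n:=\sum_i b^{(i)}_n$ (a compatible system of division points of $a$), this invariance gives
\[
  \thetat(\rho(\sigma)) = (\sigma(c_n)-c_n)_n = \sum_i\,(\sigma(b^{(i)}_n)-b^{(i)}_n)_n = \sum_i \theta_{b^{(i)}}(\sigma\res),
\]
where $\theta_{b^{(i)}}$ is the Kummer map attached to $b^{(i)}$ and $\sigma\res$ denotes the restriction of $\sigma$ to $K(b^{(i)})^{alg}$. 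Since each $b^{(i)}$ is a generic point of $X$ over $K$ and $Z=Z_X$ does not depend on the choice of such a point, every summand lies in $Z$, and hence so does $\thetat(\rho(\sigma))$, as $Z$ is a subgroup.

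I expect the main obstacle to be this third paragraph: correctly identifying $\Stab_{\Gt}(V)$ as the image of the restriction map from $\Gal(E^{alg}/E)$ --- so that the computation over $K(a)$ can be pulled back to the field $E$ where the summand decomposition lives --- and making the division-point bookkeeping precise, in particular checking that replacing the a-priori $K(a)^{alg}$-rational division points $a_n$ used to define $\thetat$ by the sum $c_n=\sum_i b^{(i)}_n$ does not change the pairing. The surjectivity step and the concluding index count are then routine.
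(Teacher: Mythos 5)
Your proposal is correct, and its skeleton coincides with the paper's: surjectivity of $\thetat$ comes from $S$ being Kummer-generic in itself via Lemma~\ref{lem:KgZ}; the kernel statement comes from lifting an element of $\operatorname{Stab}_{\Gt}(V)$ to an automorphism fixing $K(\btup)$ pointwise and then writing the Kummer cocycle of $a=\Sigma(\btup)$ coordinatewise as a sum of $d$ cocycles, each in $Z$ because each coordinate of $\btup$ is generic in $X$ over $K$; and the index count is orbit--stabiliser. The one point of genuine divergence is how the lifting is justified. The paper simply extends $\sigma\in\operatorname{Stab}_{\Gt}(V)$ to $\tau\in\Gal(K(a,\btup)^{alg}/K(a,\btup))$, the implicit reason being that $\sigma(V)=V$ and $\btup$ is generic in $V$ over $K(a)^{alg}$, so $\sigma\cup\{\btup\mapsto\btup\}$ preserves the generic type of $V$ and is therefore elementary. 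You instead realise $\operatorname{Stab}_{\Gt}(V)$ as the image of the restriction map $\Gal(E^{alg}/E)\maps\Gt$ for $E=K(\btup)$, using two ingredients: surjectivity of restriction onto $\Gal(K(a)^{alg}/E_0)$ for $E_0=K(a)^{alg}\cap E$ (this is fine: $E_0$ is relatively algebraically closed in $E$, hence $E$ and $E_0^{sep}$ are linearly disjoint over $E_0$, which is the standard fact you are invoking), and the identification $\operatorname{Stab}_{\Gt}(V)=\Gal(K(a)^{alg}/E_0)$. The latter is your thin spot: the transcendence-degree count only shows that $\btup$ is generic in $V$ over $K(a)^{alg}$, and the needed inclusion $\operatorname{Stab}_{\Gt}(V)\subseteq\Gal(K(a)^{alg}/E_0)$ still requires an argument --- for instance, any $e\in E_0$ is of the form $f(\btup)$ for a $K$-rational function $f$, so $f\equiv e$ on $V$, and $\sigma(V)=V$ then forces $\sigma(e)=e$; moreover in characteristic $p$ the smallest field of definition of $V$ may agree with $E_0$ only up to a purely inseparable extension, though this does not affect the Galois groups. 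Your preliminary reduction to a single ``conveniently chosen'' $V$ (via conjugacy of components and normality of the kernel) is valid but unnecessary: the generic point of \emph{any} component of $\Sigma^{-1}(a)$ is automatically generic in $X^d$ over $K$, since all components have dimension $\dim(X^d)-\dim(S)$ and $a$ is generic in $S$ --- this is exactly the paper's observation. In sum, your route repackages the paper's one-line extension step as a statement about images of restriction maps of absolute Galois groups; it buys a purely field-theoretic formulation at the cost of heavier (if standard) machinery on fields of definition and linear disjointness, and is otherwise the same proof.
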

\begin{proof}
  Obviously, $S$ is Kummer-generic in itself, as $[n]^{-1}(S) = S$ is
  irreducible for all $n$. Thus $Z_S = T$, showing that the map
  $\tilde{\theta}$ is surjective.

  Now let $\btup$ be generic in $V$ over $K(a)$. Note that $\btup$ is
  generic in $X^d$ over $K$.

  Let $\sigma \in \Stab_{\Gt}(V) \leq \Gt$. Then $\sigma$ extends to $\tau \in
  \Gal(K(a,\btup)^{alg}/K(a,\btup))$. Choose $(\btup_n)_{n\in\N}$ satisfying
  $m \btup_{nm} = \btup_n$ and $\btup_1=\btup$. We then have
  \begin{align*}
    \thetat(\sigma)  &= ( \sigma(\Sigma\btup_n) - \Sigma\btup_n )_n \\
    &= ( \Sigma (\tau\btup_n - \btup_n) )_n \\
    &= ( \tau(\btup_n)_1 - (\btup_n)_1 )_n + ... +
    ( \tau(\btup_n)_d - (\btup_n)_d )_n
  \end{align*}
  But this is an element from $Z+\ldots+Z=Z$, and so $\thetat$ induces a map
  as stated.
\end{proof}


This claim proves the non-uniform part of Proposition~\ref{prp:main}. Since
the number of irreducible components of the generic fibre of $\Sigma$ is
bounded uniformly in families, we obtain Proposition~\ref{prp:main}.

\smallskip

In our initial proof, we used the following remark to reduce the statement
to the case of curves. This is no longer needed, and so we include it without
proof. (One may use arguments similar to \cite[Proof of Lemme 3.1]{Poi01}.)

\begin{remark}
  Let $X\subseteq S$ be a Kummer-generic variety of dimension at least 2. Let
  $H\subseteq S$ be a generic hyperplane (with respect to some embedding of
  $S$ into some $\mathbb{P}^n$). Then $X\cap H$ is Kummer-generic.
\end{remark}

We would also like to remark that the proof of Proposition~\ref{prp:main}
applies generally to endomorphisms of abelian algebraic groups with finite
kernel, in particular to the Artin-Schreier map
$\wp:\G_a^n\maps \G_a^n;\; \xtup\mapsto (x_i^p-x_i)_i$ in characteristic
$p\neq 0$, and its compositional iterates $\wp^{(m)}$. We therefore obtain an
analogue of Theorem~\ref{thm:ag1}:

\begin{remark}
  Say $X\subseteq \G_a^n$ is \defnstyle{Artin-Schreier-generic} iff
  $(\wp^{(m)})^{-1}X$ is irreducible for all $m$. So if $X\subseteq
  \G_a^n$ is free in the sense that the summation map $\Sigma : X^n\maps
  \G_a^n$ is dominant, then for some $m$ each irreducible component of
  $(\wp^{(m)})^{-1}X$ is Artin-Schreier-generic.
\end{remark}
%

\section{DMP in groups of finite Morley rank} \label{S:TA-DMP}
The aim of the rest of this paper is to give model theoretic generalisations
of Theorems~\ref{thm:ag1} and \ref{thm:ag2}. We use standard model theoretic
terminology and concepts; see e.g.\ \cite{Pil96} for these.

We show here a number of
results which will be useful when generalising our results for semiabelian
varieties to groups of finite Morley rank.

In this and the following section, we will often write $x,y,\ldots$ for
(finite) tuples of variables; similarly, $a,b,\ldots$ may denote
tuples of elements. The Morley rank of a (partial) type $\pi$ will be denoted
$\RM(\pi)$, its Morley degree by $\DM(\pi)$. Moreover, we write $\RDM(\pi)$
for the pair $\left(\RM(\pi),\DM(\pi)\right)$.

\begin{definition}
  We say that the theory $T$ is \emph{almost $\aleph_1$-categorical}
  if $T$ is non-multidimensional, with all dimensions strongly
  minimal, i.e.\ there is a fixed set of strongly minimal sets $\{D_i\mid\,
  i\in I\}$ (in $T^{eq}$) such that every non-algebraic type is non-orthogonal
  to one of the $D_i$'s.
\end{definition}

Note that in the previous definition we do not assume that the language is
countable.

Observe that $T$ is almost $\aleph_1$-categorical iff $T^{eq}$ is. Moreover,
in an almost $\aleph_1$-categorical theory, there are only finitely many 
non-orthogonality classes of strongly minimal types, so we may assume that
$I$ is finite. For background on almost $\aleph_1$-categorical theories, in
particular a proof of the following fact, we refer to \cite{PiPo02}.

\begin{fact}\label{fact:PillayPong}
  Let $T$ be almost $\aleph_1$-categorical.

  Then, in $T^{eq}$, Morley rank is finite, definable and equal to Lascar
  $\U$-rank.
\end{fact}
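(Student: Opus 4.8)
The plan is to show that every type of a finite tuple is coordinatised by finitely many elements lying in the strongly minimal sets, and then to read off all three assertions from this coordinatisation. First I would make some harmless reductions. Since $T$ is almost $\aleph_1$-categorical, there are, up to non-orthogonality, only finitely many strongly minimal types (as noted in the excerpt), so I fix representatives given by strongly minimal sets $D_1,\dots,D_n$ in $T^{eq}$, each defined over finitely many parameters. Working in $T^{eq}$ over a small model $M$ containing these parameters, it suffices to prove the three statements for types of finite tuples over such $M$, since ranks over arbitrary small sets are controlled by ranks over models. The engine of the argument is the claim: for every finite tuple $a$ there are $e_1,\dots,e_k$, each lying in some $D_i$, with $a\in\acl(M,e_1,\dots,e_k)$ and $\U(a/M)=k$.

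Next I would establish this coordinatisation together with finiteness of rank. The theory is totally transcendental: strongly minimal sets are totally transcendental, and since every non-algebraic type over any set is non-orthogonal to some $D_i$, a counting argument bounds the number of types over a parameter set by the number of choices of finitely many coordinates in the $D_i$, so $T$ is totally transcendental and in particular superstable. Being superstable, every finite tuple has finite weight, and as every regular type is non-orthogonal to some $D_i$ it has $\U$-rank $1$ (non-orthogonal regular types being domination-equivalent); hence $\U(a/M)$ is finite, bounded by the weight. The coordinates are then produced by induction on $\U(a/M)$: if $\tp(a/M)$ is non-algebraic it is non-orthogonal to some $D_i$, so after a non-forking realisation I obtain $e_1\in D_i$ forking with $a$ over $M$; the Lascar inequality gives $\U(a/Me_1)+1\le\U(a/M)$, and iterating terminates after $k=\U(a/M)$ steps with $a\in\acl(M,e_1,\dots,e_k)$.

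Finally I would deduce the three conclusions. Finiteness of Morley rank and the equality $\RM=\U$ follow from the coordinatisation: one always has $\U\le\RM$, while the Lascar inequalities for Morley rank show that adjoining a strongly minimal coordinate raises $\RM$ by at most $1$, so $\RM(a/M)\le k=\U(a/M)$; hence both ranks equal $k$ and are finite. For definability I would transport definability of dimension in the strongly minimal sets through the coordinatisation: strong minimality supplies, uniformly in parameters, the number of coordinates needed, so that in a definable family $\phi(x,y)$ the value of $\RM(\phi(x,b))$ is cut out by finitely many definable conditions on $b$. This last step --- making the coordinatisation uniform in families so as to obtain genuine definability of Morley rank, rather than mere finiteness --- is where the real work lies, and is the step I expect to be the main obstacle; finiteness and the identity with $\U$-rank are comparatively formal consequences of superstability once the coordinatisation is in place.
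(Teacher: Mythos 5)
First, a point of comparison: the paper does not prove this statement at all --- it is quoted as a Fact whose proof is deferred to Pillay and Pong \cite{PiPo02}, with finiteness of Morley rank credited to Erimbetov \cite{ErimbetovACT}. So your proposal can only be judged on its own terms, and as it stands it has two genuine gaps. The first is a circularity sitting exactly where the difficulty is concentrated. You derive total transcendence (hence superstability) from ``a counting argument [which] bounds the number of types over a parameter set by the number of choices of finitely many coordinates in the $D_i$''; but that count presupposes precisely the coordinatisation claim, which you only construct afterwards by an induction on $\U(a/M)$ --- an induction that needs $\U$-rank to be ordinal-valued (i.e.\ superstability) and, in your version, even finite, in order to terminate. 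Without superstability the extraction of coordinates $e_1,e_2,\ldots\in\acl(Ma)$ need never stop; it merely produces an infinite $M$-independent family inside $\acl(Ma)$, which is perfectly consistent with stability. Getting from the definition (a stable non-multidimensional theory whose dimensions are strongly minimal) to superstability or total transcendence is the non-trivial core of the Fact --- compare Hrushovski's theorem that unidimensional stable theories are superstable --- and cannot be obtained by counting. Moreover, two auxiliary justifications are unsound. Non-orthogonal regular types are indeed domination-equivalent, but domination equivalence does \emph{not} preserve $\U$-rank, so ``regular and non-orthogonal to some $D_i$, hence of $\U$-rank $1$'' does not follow: for a disintegrated fibration $\pi\colon Q\to P$ with $P$ and all fibres strongly minimal and no interaction among fibres, the generic type of $Q$ is regular of $\U$-rank $2$ yet domination-equivalent to the generic type of $P$. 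And Morley rank does not satisfy ``the Lascar inequalities'' in general; what your final estimate actually needs are the different (true, standard) facts that finite-to-one definable maps preserve $\RM$ and that $\RM$ of a finite product of strongly minimal sets equals the number of factors.

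The second gap you concede yourself: definability of Morley rank is ``where the real work lies'', and you do not prove it. This is not a dispensable refinement of the statement. Finiteness of $\RM$ does not imply its definability: the theory of an equivalence relation $E$ having infinitely many infinite classes together with finite classes of unbounded size is $\omega$-stable of finite Morley rank, yet the set of parameters $b$ with $\RM(E(x,b))=1$ is not definable. So definability must come from a genuinely uniform version of the coordinatisation --- uniform bounds, across a definable family, on the number of coordinates in the $D_i$ and on the degrees of algebraicity --- and that uniformity is the substance of the result as the paper uses it (definability of $\RM$ is what drives Lemma~\ref{lem:bddDegree} and the arguments of Section~\ref{S:fMR}). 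In sum: the coordinatisation strategy is the right skeleton, and with superstability granted it does yield $\RM=\U$ and finiteness over suitable models; but superstability is assumed circularly, two of the supporting lemmas are false as general statements, and the definability clause --- one third of the Fact, and the third the paper leans on most --- is left unproved.
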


Let $T$ be a theory of finite Morley rank. Recall the following definitions:
\begin{itemize}
  \item Morley rank is \emph{definable} in $T$ if for every formula
    $\phi(x,y)$ and every $r\in\N$ there is a formula $\theta(y)$ such that
    $\RM(\phi(x,a))=r$ iff $\models\theta(a)$.
  \item $T$ has the \emph{definable multiplicity property} (DMP) if for every formula $\phi(x,y)$ and every $(r,d)\in\N^2$ there is a formula
    $\theta(y)$ such that $\RDM(\phi(x,a))=(r,d)$ iff $\models\theta(a)$.
\end{itemize}

To establish the DMP, by compactness, it is enough to show that whenever
$\RDM(\phi(x,a))=(r,d)$, there is $\theta(y)\in\tp(a)$ such that
$\RDM(\phi(x,a'))=(r,d)$ whenever $\models\theta(a')$.

\begin{remark}
  Almost $\aleph_1$-categorical theories were introduced in \cite{ Belegradek};
  though stated in terms of 
  extensions of models,
  his definition is quite close to the one we use. These theories were
  further studied by \cite{ErimbetovACT} who in particular proved the
  finiteness of Morley rank.
\end{remark}

\begin{fact}[Lascar \cite{Las85}]\label{fact:Lascar}
  Let $G$ be a group of finite Morley rank and $T=\mathrm{Th}(G)$. Then $T$ is
  almost $\aleph_1$-categorical. In particular, Morley rank is finite,
  definable and equal to Lascar $\U$-rank.
\end{fact}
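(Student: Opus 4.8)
The plan is to prove almost $\aleph_1$-categoricity of $T = \Th(G)$ directly; the ``in particular'' clause then follows at once from Fact~\ref{fact:PillayPong}. Since $G$ has finite Morley rank, $T$ is totally transcendental, hence superstable of finite $\U$-rank. In a totally transcendental theory strongly minimal sets are abundant --- any definable set of Morley rank $\geq 1$ contains, over suitable parameters, a strongly minimal subset --- and every non-algebraic type is non-orthogonal to a regular type. Thus it suffices to establish the \emph{coordinatisation statement}: every regular type of $T$ is non-orthogonal to a strongly minimal type. Given this, the family $\{D_i\}$ in the definition may be taken to consist of all strongly minimal sets, and the finiteness of $I$ noted after the definition then follows from finiteness of $\U$-rank, since there can be only boundedly many pairwise orthogonal dimensions.

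The coordinatisation statement is where the group structure is essential: it can fail for general theories of finite Morley rank, and so must be extracted from the group itself. I would pass to the connected component $G^0$ (of finite index, so that the dimensions are unchanged) and invoke the two group-theoretic inputs available in finite Morley rank: the descending chain condition on definable subgroups, and Lascar's additivity of rank \cite{Las85}, by which $\U$-rank and Morley rank coincide and are additive along definable sections $G/H$. Observe first that, since non-orthogonal regular types are domination-equivalent and hence of equal $\U$-rank, and since --- using $\U = \RM$ --- a regular type of $\U$-rank $1$ is the generic of a strongly minimal set, the coordinatisation statement is \emph{equivalent} to the assertion that every regular type has $\U$-rank $1$. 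This is the form I would prove: given a regular type $p$ of $\U$-rank $n$, I would use the translation action of $G$ on the realisations of $p$ together with additivity of rank to produce a rank-one type non-orthogonal to $p$, regularity forcing the geometry of $p$ to be one-dimensional over its reductions and additive rank then forcing $n = 1$. Zilber's indecomposability theorem supplies the definable connected subgroups needed to keep this analysis inside definable subquotients of powers of $G$.

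The main obstacle is precisely this last point: showing that no regular type of a finite Morley rank group can have $\U$-rank exceeding $1$, equivalently be orthogonal to all strongly minimal sets. The engine is the additivity of $\U$-rank in superstable groups, which is exactly what distinguishes the group case from arbitrary theories of finite Morley rank, where Morley rank may strictly exceed $\U$-rank and regular types of higher rank orthogonal to every strongly minimal set can occur. Controlling the reduction via stabilisers and the generic types of definable sections --- in particular handling sections with no proper infinite definable subgroup, where one cannot simply descend a chain of subgroups --- is the delicate part of the argument.
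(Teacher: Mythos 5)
Your proposal has a genuine gap, and it sits at the very first step, the reduction. (For the record, the paper gives no proof of this statement at all: it is quoted as a Fact from Lascar's paper \cite{Las85}, so your attempt has to be measured against the theorem it cites.) You reduce everything to the ``coordinatisation statement'' and then assert that the family $\{D_i\}$ may be taken to consist of all strongly minimal sets, finiteness of $I$ following from finiteness of $\U$-rank ``since there can be only boundedly many pairwise orthogonal dimensions.'' That inference is false, and the definition of almost $\aleph_1$-categoricity is designed precisely to exclude it: one needs a \emph{fixed} set of strongly minimal sets to which every non-algebraic type is non-orthogonal. Consider the theory of a single equivalence relation $E$ with infinitely many classes, all infinite. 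It has finite Morley rank (the home sort has rank $2$, equal to $\U$-rank), and every non-algebraic type is non-orthogonal to a strongly minimal set: each class is strongly minimal, as is the quotient sort in $T^{eq}$, and the rank-$2$ generic is non-orthogonal to the quotient sort. In particular every regular type is non-orthogonal to a strongly minimal type, so your coordinatisation statement holds. Yet the generic types of distinct classes are pairwise orthogonal, and elementary extensions produce ever more classes, so this theory is multidimensional and \emph{not} almost $\aleph_1$-categorical. Thus even a complete proof of your coordinatisation statement would not yield the Fact. The real content of Lascar's theorem is non-multidimensionality --- that in a group of finite Morley rank the dimensions are captured by a bounded (indeed finite) family of strongly minimal sets --- and this is exactly where the group must be used: for instance, the generic types of the various cosets of a fixed definable subgroup are pairwise non-orthogonal, because translation by an element of the model is a definable bijection between cosets; this is precisely what fails for the classes of $E$. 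Your proposal concentrates all group-theoretic input on showing that regular types have $\U$-rank $1$, which at best gives ``all dimensions strongly minimal,'' and never addresses non-multidimensionality.

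Two further problems. The bridge ``non-orthogonal regular types are domination-equivalent and hence of equal $\U$-rank'' is false as a general principle (non-orthogonal regular types of different finite $\U$-rank exist, e.g.\ in the free pseudoplane), so the equivalence between your coordinatisation statement and ``every regular type has $\U$-rank $1$'' itself needs an argument; it can be salvaged when $\RM=\U$ is finite and definable, but that is exactly what you import by invoking rank equality and additivity from \cite{Las85} --- one of the main theorems of the very paper cited for this Fact, and by Fact~\ref{fact:PillayPong} also a consequence of its conclusion --- so the proposal is circular in spirit: if \cite{Las85} may be cited for rank equality and additivity, it may equally be cited for non-multidimensionality, which is the heart of the matter. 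Finally, the passage where the group is supposed to do the work (``the translation action \ldots regularity forcing the geometry of $p$ to be one-dimensional \ldots additive rank then forcing $n=1$'') is a sketch that cannot be checked, and your closing paragraph concedes the delicate point rather than proving it. As it stands, the proposal neither proves the statement nor reduces it to anything weaker than the cited theorem.
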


\begin{definition}
  Let $p(x)$ be a stationary type. Then $p$ is said to be \emph{uniformly of Morley
  degree 1} if there is a formula $\phi(x,a)\in p$ and $\theta(y)\in\tp(a)$
  such that $p$ is the unique generic type in $\phi(x,a)$ and
  $\DM(\phi(x,a'))=1$ whenever $\models\theta(a')$.
\end{definition}

Note that if Morley rank is definable in $T$ and $p$ is uniformly of Morley
degree 1, we may witness this by formulas $\phi$ and $\theta$ such that
$\RM(\phi(x,a'))=\RM(p)$ whenever $\models\theta(a')$.

\smallskip

For $p,q\in S(B)$, we say that $p$ is a \emph{finite cover }of $q$ if there
are $a\models p$ and $b\models q$ such that $b\in\dcl(Ba)$ and $a\in\acl(Bb)$.
The proof of the following lemma is easy and left to the reader.

\begin{lemma}\label{lem:uniformMD1}
  Assume that Morley rank is finite and definable in $T$, and let $p$ be
  uniformly of Morley degree 1.
  \begin{enumerate}
    \item If $q$ is parallel to $p$, then $q$ is uniformly of Morley degree 1.
    \item If $p$ is a finite cover of $q$, then $q$ is uniformly of Morley
      degree 1.
    \item $T$ has the DMP iff every stationary type is uniformly of
      Morley degree 1.\qed
  \end{enumerate}
\end{lemma}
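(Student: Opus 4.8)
The plan is to prove Lemma~\ref{lem:uniformMD1} by handling each of the three parts in turn, relying throughout on the assumption that Morley rank is finite and definable.

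For part (1), I would start with a witness for $p$ being uniformly of Morley degree 1: a formula $\phi(x,a)\in p$ together with $\theta(y)\in\tp(a)$ such that $p$ is the unique generic in $\phi(x,a)$ and $\DM(\phi(x,a'))=1$ whenever $\models\theta(a')$; using definability of Morley rank, I would also arrange $\RM(\phi(x,a'))=\RM(p)$ throughout. If $q$ is parallel to $p$, then $q$ and $p$ have a common nonforking extension, so they have the same Morley rank and degree, and a formula in $q$ of the same rank and degree $1$ can be found. The point is that parallelism is witnessed by a single formula and definability lets the multiplicity-1 condition spread definably to nearby parameters $a'$; I would transport the witnessing $\theta$ through the (definable) correspondence between $p$ and $q$.

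For part (2), suppose $p$ is a finite cover of $q$, witnessed by $a\models p$, $b\models q$ with $b\in\dcl(Ba)$ and $a\in\acl(Bb)$. The idea is that uniform Morley degree 1 passes downward along finite-to-one definable maps. I would take the definable function $f$ with $f(a)=b$ (coming from $b\in\dcl(Ba)$) and use it to push the formula $\phi(x,a)$ witnessing $p$ forward to a formula in $q$; since $a\in\acl(Bb)$, the fibres of $f$ over generics of $q$ are finite, so $\RM$ is preserved and the image formula again has a unique generic. The technical content is checking that the image of a rank-preserving, generically-unique-type, degree-1 formula under such an $f$ is again degree 1 and that this degree-1-ness can be made definable in the parameter; here definability of Morley rank (and hence of the fibre-size of $f$) is exactly what makes the witnessing $\theta$ transport.

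For part (3), the equivalence with the DMP, the backward direction is the substantive one and I expect it to be the main obstacle. For the forward direction, if $T$ has the DMP then for any stationary $p$, pick $\phi(x,a)\in p$ isolating $p$ as the unique generic with $\DM(\phi(x,a))=1$ (possible since $p$ is stationary, shrinking $\phi$ to have degree equal to that of $p$); the DMP then directly supplies the $\theta(y)$ making $\RDM(\phi(x,a'))$ constant, so $p$ is uniformly of Morley degree 1. For the converse, given $\phi(x,c)$ with $\RDM(\phi(x,c))=(r,d)$, I would decompose $\phi(x,c)$ into its $d$ generic types $p_1,\dots,p_d$ (each of rank $r$ and degree 1 after localising); each $p_i$ is stationary over a finite algebraic extension of $c$, hence by hypothesis uniformly of Morley degree 1, and parts (1) and (2) let me replace the possibly-larger base by $c$ itself. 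The crux is assembling the finitely many per-component witnesses $\theta_i$ into a single formula over $c$ that controls both $\RM$ and $\DM$ of $\phi(x,c')$ simultaneously for $c'$ near $c$: I would use definability of Morley rank to pin the rank, and intersect the $\theta_i$ (together with a formula asserting the $d$ components remain distinct and generic) to force the degree to stay exactly $d$. Handling the descent of the base from $\acl^{eq}(c)$ back to $c$, and ensuring no extra generic components appear for nearby $c'$, is where the argument requires the most care.
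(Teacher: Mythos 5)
The paper contains no proof of Lemma~\ref{lem:uniformMD1} to compare against: it is explicitly ``easy and left to the reader''. Judged on its own terms, your plan has the right architecture. Part (3) as you describe it is the expected argument: the forward direction applies the DMP to a formula $\phi(x,a)\in p$ with $\RDM(\phi(x,a))=(\RM(p),1)$; for the converse one takes the $d$ generic types $p_1,\dots,p_d$ of $\phi(x,c)$ (each stationary over a finite tuple $a_i$ from $\acleq(c)$), uniform witnesses $\phi_i(x,a_i)$, $\theta_i$ for each, and lets $\theta(y)\in\tp(c)$ existentially quantify tuples $y_1,\dots,y_d$ subject to conditions which are first-order by definability of $\RM$: namely $\theta_i(y_i)$, $\phi_i(x,y_i)\vdash\phi(x,y)$, $\RM(\phi(x,y))=r$, $\RM\bigl(\phi_i(x,y_i)\wedge\phi_j(x,y_j)\bigr)<r$ for $i\neq j$, and $\RM\bigl(\phi(x,y)\wedge\neg\bigvee_i\phi_i(x,y_i)\bigr)<r$; together these force $\RDM(\phi(x,c'))=(r,d)$. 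Note that your appeal to parts (1) and (2) to ``replace the base by $c$'' is unnecessary and slightly off the mark ($p_i$ is not stationary over $c$, and need not become so): the auxiliary parameters are simply quantified out, as above.

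Two of your intermediate claims are false as literally stated, although in both cases the conclusion is correct and the repair is routine. In (1), parallel types are not linked by any definable correspondence; what exists is a common nonforking extension, say the global type $r$. The transport runs as follows: pick $\psi(x,b)\in q$ with $\RDM(\psi(x,b))=(n,1)$, where $n=\RM(p)=\RM(q)$; since $r$ is the unique generic of both $\phi(x,a)$ and $\psi(x,b)$, the formula $\theta'(z):=\exists y\,\bigl[\theta(y)\wedge\RM(\psi(x,z))=n\wedge\RM(\psi(x,z)\wedge\phi(x,y))=n\wedge\RM(\psi(x,z)\wedge\neg\phi(x,y))<n\bigr]$ (first-order by definability of $\RM$) lies in $\tp(b)$, and for any $b'\models\theta'$ every rank-$n$ type containing $\psi(x,b')$ must contain a rank-$n$, degree-$1$ instance of $\phi$ (recall $\theta$ may be taken to fix the rank as well), so $\DM(\psi(x,b'))=1$. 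In (2), $a\in\acl(Bb)$ does not make the fibre $f^{-1}(b)$ finite --- it only puts the $Bb$-conjugates of $a$ among its possibly infinitely many points --- and the Lascar-inequality computation that would bound generic fibres is unavailable here, since this lemma does not assume $\RM=\U$. Either first shrink $\phi$ by $\chi(x,f(x))$, where $\chi(x,z)$ witnesses $a\in\acl(Bb)$ and has uniformly bounded fibres, after which your pushforward argument runs; or dispense with fibres entirely: $\RM(q)=\RM(p)=n$ holds because $b\in\dcl(Ba)$ and $a\in\acl(Bb)$, every type containing the image formula $\exists x\,(\phi(x,c)\wedge f(x)=z)$ is the pushforward of a type containing $\phi(x,c)$ and so has rank at most $n$, and hence adding to $\theta$ the definable condition that the image formula has rank $n$ already guarantees that it has a unique rank-$n$ type, i.e.\ degree $1$.
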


\begin{lemma}\label{lem:charDMPsm}
  Assume that Morley rank is finite, definable and equal to $\U$-rank in
  $T=T^{eq}$. Then $T$ has the DMP iff every strongly minimal type
  is uniformly of Morley degree 1.
\end{lemma}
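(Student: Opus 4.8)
The plan is to reduce the statement to part (3) of Lemma~\ref{lem:uniformMD1}, which already says that $T$ has the DMP iff every stationary type is uniformly of Morley degree $1$; so it suffices to prove, under the standing hypotheses on $T=T^{eq}$, that every stationary type is uniformly of Morley degree $1$ provided every strongly minimal type is. One direction is immediate: if $T$ has the DMP then every stationary type, in particular every strongly minimal one, is uniformly of Morley degree $1$ by Lemma~\ref{lem:uniformMD1}(3). For the converse I would first note that, since $\RM=\U$ is finite, a stationary type has Morley rank $1$ iff it is strongly minimal (a stationary rank-$1$ type contains a formula with $\RDM=(1,1)$, which is strongly minimal), so the hypothesis says exactly that all stationary rank-$1$ types are uniformly of Morley degree $1$. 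I then argue by induction on $n=\RM(p)$ that every stationary type $p$ is uniformly of Morley degree $1$.

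For the base cases: if $n=0$ then $p$ is algebraic of degree $1$, hence has a unique realisation isolated by some $\phi(x,c)\in p$ with exactly one solution, and ``$\phi(x,y)$ has a unique solution'' is the required $\theta(y)$; the case $n=1$ is the hypothesis. For the inductive step, fix $p=\tp(a/B)$ stationary with $\RM(p)=n\geq 2$, and work over $B=\acleq(B)$. Using finiteness of Morley rank I would find a strongly minimal $b\in\acleq(Ba)\setminus\acleq(B)$, so that $q:=\tp(b/B)$ is strongly minimal and hence uniformly of Morley degree $1$ by hypothesis. Since $b\in\acleq(Ba)$ we have $\U(ab/B)=\U(a/B)$, and as $\U=\RM$ is finite the Lascar inequalities collapse to additivity, giving $\RM(\tp(a/\acleq(Bb)))=\U(a/Bb)=n-1$. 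The type $r:=\tp(a/\acleq(Bb))$ is stationary of rank $n-1$, so by induction it is uniformly of Morley degree $1$.

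It remains to assemble $q$ and $r$ into a uniform witness for $p$, and this is the main obstacle. I would first pass to the finite cover $p^\ast:=\tp((a,b)/B)$ of $p$; here $b\in\dcleq(B(a,b))$, so $p^\ast$ is genuinely fibred over $q$ via the projection $(a,b)\mapsto b$, and by Lemma~\ref{lem:uniformMD1}(2) it is enough to show that $p^\ast$ is uniformly of Morley degree $1$. Viewing $p^\ast$ as the generic of a definable fibration whose base is the locus of $q$ and whose generic fibre is the locus of $r$, I would choose witnessing formulas $\phi_q(y,\bar m)\in q$ and $\psi(x,y,\bar e)\in r$ together with their accompanying $\theta$-formulas, and use definability of Morley rank (Fact~\ref{fact:PillayPong}) to carve out, by a single formula $\theta_{p^\ast}(\bar c)\in\tp(\bar c)$ in the joint parameters $\bar c$, the locus on which the base has $\RDM=(1,1)$, the generic fibre has $\RDM=(n-1,1)$, and the total Morley rank is $n$.

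The crux is then the degree-$1$ conclusion on this locus. Since Morley degree is not multiplicative in general, it cannot be read off a naive product formula; instead, on that locus additivity of rank forces a total generic of rank $n$ to project to the unique generic of the base and to restrict to the unique generic fibre over it, so the total generic is unique, whence the combined formula has $\RDM=(n,1)$ throughout the family. This witnesses that $p^\ast$, and therefore $p$, is uniformly of Morley degree $1$. I expect the real work to lie precisely in turning this pointwise uniqueness of the generic into a genuinely definable family of good parameters; it is definability of Morley rank that makes this possible and so upgrades the pointwise statement to the required uniform one. Along the way I would record the minor bookkeeping that $b$ lies in $\acleq(Ba)$ rather than $\dcleq(Ba)$ and that $r$ is taken over $\acleq(Bb)$, handled respectively by the finite-cover and parallelism clauses of Lemma~\ref{lem:uniformMD1}.
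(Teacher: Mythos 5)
Your proposal follows the same skeleton as the paper's proof --- reduction via Lemma~\ref{lem:uniformMD1}(3), induction on Morley rank, a decomposition into a strongly minimal ``base'' and a rank-$(n-1)$ ``fibre'', and assembly of witnessing formulas via definability of Morley rank --- but it has a genuine gap at the decisive step. You assert that, ``using finiteness of Morley rank'', there is a strongly minimal $b\in\acleq(Ba)\setminus\acleq(B)$ for an arbitrary algebraically closed base $B$. Finiteness of rank gives no such element, and the assertion is in fact \emph{false} under the hypotheses of the lemma. Counterexample: let $k$ be an algebraically closed field, $V$ a two-dimensional $k$-vector space, $A$ an affine space (regular torsor) under $V$, and let $T$ be the theory of the three-sorted structure $(k,V,A)$ with no point of $A$ named, passed to $T^{eq}$. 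Every non-algebraic type is non-orthogonal to the strongly minimal set $k$, so $T$ is almost $\aleph_1$-categorical and satisfies all hypotheses by Fact~\ref{fact:PillayPong}. The generic type $p=\tp(a/\acleq(\emptyset))$ of $A$ is stationary of rank $2$, yet $\acleq(a)$ contains no element of rank $1$ over $\acleq(\emptyset)$: if $e$ were one, then by additivity of $\U=\RM$ the set of realisations of $\tp(a/e)$ would be a rank-$1$ subset of $A$ through $a$, stabilised setwise by a finite-index subgroup of the group of automorphisms fixing $a$ and the field pointwise; identifying $A$ with $V$ via $a$, that group is $\mathrm{GL}_2$ of the field, which has no proper finite-index subgroups and whose only invariant subsets of $V$ are $\emptyset$, $\{0\}$, $V\setminus\{0\}$ and $V$ --- none of rank $1$. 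So over this $B$ your inductive step cannot even begin.

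This is exactly why the paper does not work over an arbitrary base. It invokes \cite[Lemma 2.5.1]{Pil96} to get a \emph{minimal} type $p_1$ with $p\not\perp p_1$ --- non-orthogonality is all that finite $\U$-rank buys you; it does not place a realisation of $p_1$ inside $\acleq(Ba)$ --- and then uses Lemma~\ref{lem:uniformMD1}(1) to replace $p$ and $p_1$ by non-forking extensions over a suitable model $M$ where $p\not\perp^{a}p_1$, so that some $a_1\models p_1$ lies in $\acl(Ma)$; strong minimality of $p_1$ then comes from $\RM=\U$. (In the counterexample this is precisely what happens: once a point $a_0\in A(M)$ is available, $a-a_0\in V$ and its image in the projectivisation of $V$ give rank-$1$ elements of $\acleq(Ma)$.) Your proof can be repaired by making these two moves --- cite the non-orthogonality fact instead of ``finiteness of Morley rank'', and pass to a model via parallelism before extracting $b$ --- after which your remaining steps (additivity of finite $\U$-rank giving the fibre rank $n-1$, the stationarity bookkeeping, and the definability-of-rank assembly with the degree-$1$ conclusion) agree with the paper's conditions (i)--(iv) and go through.
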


\begin{proof}
  For $T$ strongly minimal, this is shown in \cite{Hru92}.

  The condition is clearly necessary. In order to show that it is sufficient,
  it is enough to show that any stationary type $p$ is uniformly of Morley
  degree 1, by Lemma \ref{lem:uniformMD1}(3). We prove this by induction on
  $\RM(p)=r$, the case $r=1$ being true by assumption (and $r=0$ being
  trivial).

  Assume the result is true for types of Morley rank $\leq r$ and let $p(x)\in
  S(B)$ be stationary, $\RM(p)=r+1$.

  Since $0<\U(p)<\omega$, there is a minimal type $p_1(x_1)$ (i.e.
  $\U(p_1)=1$) such that $p\not\perp p_1$ (see \cite[Lemma 2.5.1]{Pil96}).
  By Lemma \ref{lem:uniformMD1}(1), we may replace $p$ and $p_1$ by
  non-forking extensions and thus assume that $p,p_1\in S(M)$ for some model
  $M$ and $p\not\perp^{a} p_1$, whereby there are $a\models p$ and $a_1\models
  p_1$ such that $a_1\in\acl(Ma)$. By Lemma \ref{lem:uniformMD1}(2), we may
  replace $p$ by the finite cover $\tp(aa_1/M)$ and thus assume that $p_1$ is
  given by the restriction of $p(x)$ to a subtuple $x_1$ of the variables. Let
  $a_2$ be such that $a=a_1a_2$. By assumption, $\U=\RM$, so $p_1$ is strongly
  minimal. Let $q(x_2)=\tp(a_2/Ma_1)$. By $\omega$-stability,
  $\tp(a_2/M\tilde{a}_1)$ is stationary for some finite $\tilde{a}_1$ with
  $a_1\subseteq\tilde{a}_1\subseteq\acl(Ma_1)$. Enlarging $a_1$ if necessary,
  we may thus assume that $q$ is stationary.

  Moreover, the Lascar inequalities and $\RM=\U$ imply that $\RM(q)=r$.

  Using definability of Morley rank and the induction hypothesis, we may find
  an $\L$-formula $\phi(x_2,x_1,z)$, $b\in M$ and $\theta(z)\in\tp(b)$ such
  that

  \begin{itemize}
    \item[(i)] $p$ is the unique generic type in $\phi(x_2,x_1,b)$ over $M$;
    \item[(ii)] $\RM(\phi(x_2,x_1,b'))=r+1$ whenever $\models\theta(b')$;
    \item[(iii)] $\psi(x_1,b')=\exists x_2\phi(x_2,x_1,b')$ is strongly
      minimal whenever $\models\theta(b')$ (in particular, $p_1$ is the unique
      generic type in $\psi(x_1,b)$ over $M$);
    \item[(iv)] whenever $\phi(x_2,a_1',b')$ is consistent,
      $\RDM(\phi(x_2,a_1',b'))=(r,1)$.
  \end{itemize}

  It is routine to check that $\phi(x_2,x_1,b)$ and $\theta(z)$ witness that
  $p$ is uniformly of Morley degree 1.
\end{proof}

It follows in particular from the previous lemma that in any strongly minimal theory without the DMP, there 
is a strongly minimal type $p$ which is not uniformly of Morley degree 1. To illustrate this, let us 
recall the following example due to Hrushovski \cite{Hru92}.

\begin{example}
Let $V$ be a non trivial vector space over $\Q$ and $0\neq v_0\in V$. Let $D = V \times \{0, 1\}$  
equipped with the projection $\pi: D \rightarrow V$ and the function $f : D \rightarrow D$, $f(v,i)=(v+v_0,i)$. 
Then $T=\mathrm{Th}(D)$ is strongly minimal without the DMP. For any $b\in D$, the formula 
$\phi(x,y,b)$ given by $\pi(x)=\pi(y)+\pi(b)$ is of Morley rank 1, and it is strongly minimal iff $\pi(b)\not\in\Z\cdot v_0$. (For $b\in\Z\cdot v_0$, one has $\DM(\phi(x,y,b))=2$.) 

Thus, for generic $b$ the (unique) generic type $p(x,y)$ of $\phi(x,y,b)$ is strongly minimal and not uniformly 
of Morley degree 1. In particular, if $q(x)$ denotes the generic 1-type in $T$, $p$ and $q$ are non-orthogonal strongly minimal 
types, $q$ is uniformly of Morley degree 1 and $p$ is not. 
\end{example}

\begin{lemma}\label{lem:bddDegree}
  Let $T$ be almost $\aleph_1$-categorical, and let $\phi(x,z)$ be a formula.
  There is $N\in\N$ such that $\DM(\phi(x,b))\leq N$ for all $b$.
\end{lemma}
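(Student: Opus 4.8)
The plan is to induct on $r$, the maximal Morley rank of an instance $\phi(x,b)$, which is finite by Fact~\ref{fact:PillayPong}. Since Morley rank is definable, for each $s\le r$ the set $\{z\mid\RM(\phi(x,z))=s\}$ is definable; as there are only finitely many values of $s$, it suffices to produce, for each fixed $s$, a bound $N_s$ on $\DM(\phi(x,b))$ valid for all $b$ with $\RM(\phi(x,b))=s$, and then take $N=\max_s N_s$. I therefore fix the rank of the instances to be exactly $r$ and bound their Morley degree, assuming the statement for all formulas whose instances have rank $<r$.

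For the base case $r=0$ every instance $\phi(x,b)$ is finite, and its Morley degree is just its cardinality; a bound on this cardinality uniform in $b$ is immediate by compactness, since otherwise one would find, in an elementary extension, a parameter $b^\ast$ with $\phi(x,b^\ast)$ infinite.

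For the inductive step I would peel off one strongly minimal coordinate. By almost $\aleph_1$-categoricity and $\RM=\U$, the (finitely many) generic types of a rank-$r$ instance $\phi(x,b)$ are each non-orthogonal to one of the fixed strongly minimal sets $D_i$; using $\RM=\U$ and finiteness of rank this yields a map $f_b:\phi(x,b)\maps D_i$, definable over $\acleq(b)$, whose image has rank $1$ (hence is cofinite in $D_i$ and so of Morley degree $1$) and whose generic fibre has rank $r-1$. The standard fibration computation of rank and degree along a definable map then gives $\DM(\phi(x,b))=\DM(\text{generic fibre of }f_b)$, and the generic fibre is an instance of a formula whose instances have rank $r-1$, so the induction hypothesis bounds its degree. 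Combining with the finitely many possible $D_i$ yields the bound $N_r$.

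The main obstacle is uniformity: a priori both the index $i$ and the coordinatizing map $f_b$ depend on $b$, and I must ensure they range over a bounded-complexity definable family, so that the generic fibres are instances of finitely many formulas to which the induction hypothesis applies with a single bound. Splitting the parameter space according to which $D_i$ works is harmless, as there are finitely many $D_i$ and, by definability of Morley rank, these splits are definable. To control the complexity of $f_b$ I would argue by compactness: for each fixed complexity the set of $b$ admitting a coordinatization of that complexity is definable (again by definability of rank), these sets cover all rank-$r$ instances, and hence finitely many complexities suffice. This reduces the inductive step to finitely many applications of the induction hypothesis together with the strongly minimal base case.
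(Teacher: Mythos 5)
Your strategy (induction on rank, coordinatisation via the fixed strongly minimal sets $D_i$, a fibration bound, compactness for uniformity) parallels the paper's, but the inductive step rests on an assertion that fails: non-orthogonality of a generic type of $\phi(x,b)$ to $D_i$ does \emph{not} yield a definable map $f_b:\phi(x,b)\maps D_i$. For minimal types, non-orthogonality gives (after moving to a model) realisations $a$ and $e\in D_i$ that are \emph{interalgebraic} over the base, i.e.\ a finite-to-finite correspondence, not a function. One can manufacture a function by sending $a$ to the code of the finite set of conjugates of $e$ over the base and $a$, but its values then lie in an imaginary sort (finite subsets of $D_i$), not in $D_i$, so your step ``the image is cofinite in $D_i$ and so of Morley degree $1$'' collapses --- and bounding the degree of a rank-$1$ set in that imaginary sort is essentially the problem being solved. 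Two further inaccuracies: the claimed equality $\DM(\phi(x,b))=\DM(\text{generic fibre of }f_b)$ is false in general --- in $\ACF$, project $\{(x,y,z)\mid y^2=x\}$ to the $x$-coordinate: the total space is irreducible but the generic fibre has degree $2$; only the inequality $\DM(\text{total space})\leq\DM(\text{base})\cdot\DM(\text{fibre bound})$ holds, which is what you need. Also, a single map to a single $D_i$ cannot coordinatise all of $\phi(x,b)$ when $\DM(\phi(x,b))>1$, since distinct generic types may be non-orthogonal to distinct $D_i$'s; the fibration argument can only treat one degree-one ``piece'' at a time.

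The paper's proof shows how to repair this, and the repairs are not cosmetic. It proves by induction on rank a claim about \emph{types}, not about instances of the given formula: every global type is a generic type of an instance of \emph{some} formula all of whose instances have uniformly bounded degree. In the rank-$1$ case the correspondence is used directly: if $\chi(x,y,c)$ is generically $m$-to-$n$ between $\phi(x,b)$ and $D_i$, one arranges that $\chi(x,y,b')$ is generically $m$-to-$n$ between $\phi(x,b')$ and $D_i$ whenever consistent, and strong minimality of $D_i$ then gives $\DM(\phi(x,b'))\leq m$ for all $b'$ --- no map is needed. In the inductive step one first replaces the type by a \emph{finite cover}, so that the rank-one coordinate becomes an actual subtuple $x_1$ of the variables and the ``map'' is a coordinate projection; since a finite-to-one surjection can only increase Morley degree, a bound for the cover bounds the original set, and the degrees multiply to give $N_1N_2$. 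The final assembly is close to your compactness paragraph: given $b$ with $\DM(\phi(x,b))=d$, the $d$ generic types are generic in good instances $\chi_i(x,b_i)$, and definability of Morley rank provides $\theta(y)\in\tp(b)$ such that for $b'\models\theta$ there are $b_i'$ with $\RM\bigl(\phi(x,b')\wedge\neg\bigvee_{i}\chi_i(x,b_i')\bigr)<\RM(\phi(x,b'))=\RM(\chi_i(x,b_i'))$, whence $\DM(\phi(x,b'))\leq\sum_i N_i$; compactness over all $b$ finishes. Without the passage to correspondences and finite covers, the coordinatising objects your compactness argument quantifies over simply do not exist.
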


\begin{proof}
  Adding parameters to the language if necessary, we may assume that there are
  strongly minimal sets $(D_i)_{1\leq i\leq k}$ defined over $\emptyset$ such
  that every non-algebraic type is non-orthogonal to one of the $D_i$'s.

  \begin{claim*}
    Every global type $p(x)$ is a generic type in some $\phi(x,b)$,
    where $\phi(x,z)$ is an $\L$-formula for which the statement of the lemma is
    true.
  \end{claim*}

  We prove the claim by induction on $r=\RM(p)$, the case $r=0$ being
  trivial.

  Let $p$ be strongly minimal. By assumption, $p\not\perp D_i$ for some $i$.
  Let $D_i$ be defined by $\psi_i(y)$. Since $p(x)$ is a global type, there is
  $\phi(x,b)\in p$ and a finite-to-finite correspondence $\chi(x,y,c)$ between
  $\phi(x,b)$ and $\psi_i(y)$. Let $m,n\in\N$ such that $\chi(x,y,c)$
  generically induces an $m$-to-$n$ correspondence (i.e.\ outside a finite
  set). We may assume that $b=c$ and that $\chi(x,y,b')$ generically induces
  an $m$-to-$n$ correspondence between $\phi(x,b')$ and $\psi_i(y)$ whenever
  $\phi(x,b')$ is consistent. Since $\psi_i(y)$ is strongly minimal,
  $\DM(\phi(x,b'))\leq m$ for all $b'$. This proves the case $r=1$.

  For the induction step, we argue as in the proof of Lemma
  \ref{lem:charDMPsm}. Let $\RM(p)=r+1$. Replacing $p$ by a finite cover, we
  may assume that $p(x)=p(x_1x_2)$ and, by induction, that there is an
  $\L$-formula $\phi(x_2,x_1,z)$ and $b\in M$ such that

  \begin{itemize}
    \item $p$ is the unique generic type in $\phi(x_2,x_1,b)$ over $M$;
    \item $\RM(\phi(x_2,x_1,b'))=r+1$ whenever it is consistent;
    \item $\psi(x_1,b')=\exists x_2\phi(x_2,x_1,b')$ is of Morley rank 1 and
      degree $\leq N_1$ whenever it is consistent;
    \item whenever $\phi(x_2,a_1',b')$ is consistent, it is of Morley rank $r$
      and degree $\leq N_2$.
  \end{itemize}

  Then $\phi(x_2,x_1,b')$ is of Morley degree $\leq N_1 N_2$ for every $b'$,
  so the claim is proved.

  \smallskip

  Now let $\phi(x,z)$ be given. For a parameter $b$ with $\DM(\phi(x,b))=d$,
  let $p_1,\ldots,p_d$ be the (global) generic types of $\phi(x,b)$. By the
  claim, for $1\leq i\leq d$, there exists a formula $\chi(x,z_i)$, a
  parameter $b_i$ and $N_i\in\N$ such that $p_i$ is generic in $\chi_i(x,b_i)$
  and $\DM(\phi(x,b_i'))\leq N_i$ for every $b_i'$. Since Morley rank is
  definable, there is $\theta(y)\in\tp(b)$ such that whenever
  $\models\theta(b')$, there are $b_1',\ldots,b_d'$ with
  $\RM\left(\phi(x,b')\wedge\neg\bigvee_{i=1}^d\chi_i(x,b'_i)\right)<
  \RM\left(\phi(x,b')\right)=\RM\left(\chi_i(x,b'_i)\right)$
  for $i=1,\ldots,d$. So $\DM(\phi(x,b'))\leq\sum_{i=1}^d N_i$ if
  $\models\theta(b')$.

  Since such a formula exists for every $b$, we are done by compactness.
\end{proof}

Now let $T$ be a theory which is complete and has quantifier elimination in
some language $\cal L$, and let $\sigma\not\in\cal L$ be a new unary function
symbol. Consider $T_{\sigma}:=T\cup\{\sigma\text{ is an $\cal
L$-automorphism}\}$, a theory in the language $\cal L\cup\{\sigma\}$. Denote
by $TA$ the model companion of $T_{\sigma}$ if it exists.

In the proof of the following result, we proceed as in \cite{HaHr07}, where
Corollary \ref{cor:HaHiHr} is shown for $T$ strongly minimal.

\begin{proposition}\label{prop:finitecover}
  Let $T$ be stable, and assume $TA$ exists. Then, for strongly minimal types
  in $T$, being uniformly of Morley degree 1 is invariant under
  non-orthogonality.
\end{proposition}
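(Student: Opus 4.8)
The plan is to reduce to a lifting problem along a finite algebraic cover and then solve that problem by means of the generic automorphism, following \cite{HaHr07}. So suppose $p,q$ are strongly minimal types with $p\not\perp q$ and $p$ uniformly of Morley degree $1$. By Lemma~\ref{lem:uniformMD1}(1) we may replace $p$ and $q$ by non-forking extensions without affecting either the hypothesis or the conclusion, so we may assume $p=\tp(a/M)$ and $q=\tp(c/M)$ over a common model $M$. Non-orthogonality of strongly minimal types over a model yields, after such a base change, interalgebraic realisations: we may take $a\models p$ and $c\models q$ with $a\in\acl(Mc)$ and $c\in\acl(Ma)$. Form the common finite cover $r:=\tp(ac/M)$, which is again strongly minimal. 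Since $r$ is a finite cover of $q$, Lemma~\ref{lem:uniformMD1}(2) reduces the conclusion to showing that $r$ itself is uniformly of Morley degree $1$; that is, the whole problem becomes one of lifting uniform Morley degree $1$ from the base $p$ up the finite cover $r\to p$.

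This lifting is the heart of the matter, and it is exactly where the hypothesis that $TA$ exists is used: without it the statement is false, as Hrushovski's example above shows, where the generic $1$-type is uniformly of Morley degree $1$ while the non-orthogonal strongly minimal type given by the generic type of $\phi(x,y,b)$ (for generic $b$) is not. Thus any argument must genuinely see the automorphism. The mechanism, as in \cite{HaHr07} and \cite{Hru92}, is that the generic automorphism detects Morley degree definably. Concretely I would work in a model $(\mathcal{U},\sigma)\models TA$ with $M$ fixed pointwise by $\sigma$, take $a\models p$ generic and $\sigma$ generic over $Ma$, and use uniform Morley degree $1$ of $p$ to pin down, uniformly in the defining parameter, the $\sigma$-configuration of $a$ over $M$. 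The Morley degree of $r$ is the number of strongly minimal components of the locus of $c$ over $Ma$ inside $\acl(Ma)$, and this count can be read off from how $\sigma$ links these components; precisely because $TA$ is axiomatisable, the relevant condition is expressed by first-order formulas in the parameter defining the cover.

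Carrying this out, I would produce a formula $\phi(x,x',b)\in r$ together with $\theta(z)\in\tp(b)$ such that $\RDM(\phi(x,x',b'))=(1,1)$ whenever $\models\theta(b')$: the $x$-coordinate is governed by the witness for uniform Morley degree $1$ of $p$, while the generic automorphism, via the $TA$-axioms, supplies the definability forcing the $x'$-fibre to contribute a single generic component. This shows $r$ is uniformly of Morley degree $1$, and then Lemma~\ref{lem:uniformMD1}(2) gives the same for $q$. The main obstacle is precisely this last definability input: one must extract from the axiomatisation of $TA$ a uniform first-order description of when the $\sigma$-twisted locus controlling the fibre has a generic realisation, and check that it is invariant along the interalgebraicity between $a$ and $c$ — which it should be, since $\sigma$ commutes with $\acl$. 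This is the step that reproduces, in the present relative setting, the argument of \cite{HaHr07} for strongly minimal $T$.
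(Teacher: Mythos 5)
Your opening reduction is exactly the paper's: pass to a common model, use non-orthogonality of strongly minimal types to obtain interalgebraic realisations, and use Lemma~\ref{lem:uniformMD1} to trade the statement for ``the finite cover $r=\tp(ac/M)$ is uniformly of Morley degree 1''. But from that point on your text describes the problem rather than solving it. The sentence that the generic automorphism ``supplies the definability forcing the $x'$-fibre to contribute a single generic component'' is the conclusion of the proposition, not an argument for it, and you say yourself that extracting this from the axiomatisation of $TA$ is ``the main obstacle''. Nothing in $TA$ directly ``detects Morley degree definably'': Hrushovski's example (recalled before the proposition) shows that without $TA$ no formula in the parameter can pin down the number of generic components, so a concrete mechanism is required, and your sketch does not supply one --- no formula $\theta$ is actually produced, nor is any property of models of $TA$ actually invoked beyond its existence.

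The paper's proof supplies the mechanism in three steps, all absent from your proposal. First, a symmetrisation step: replace $a$ by the tuple $\tuple{\beta}_1$ of \emph{all} $Mc$-conjugates of $a$, so that the cover becomes Galois --- there is a fixed finite group $G\leq S_n$ acting by permuting coordinates, and uniform Morley degree 1 of the base (witnessed by $\phi(x,y)$, $\theta(y)$ and a definable projection $\pi$) yields a uniform family $\pi:\tilde{D}_{a'}\twoheadrightarrow D_{a'}$ with $D_{a'}$ strongly minimal and every fibre a regular $G$-orbit. Second, the only use of $TA$: for each $\tau\in G$, since $\xtup$ and $\tau\cdot\xtup$ are both generic in $\tilde{D}_{a'}$ over a model and hence have the same type there, existential closedness of models of $TA$ produces a point of the cover on which $\sigma$ acts as $\tau$ while fixing its image under $\pi$; by compactness this is captured by a formula $\theta_\tau(y)\in\tp(a)$. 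Third, a group-theoretic argument: for any $a'\models\bigwedge_{\tau\in G}\theta_\tau$, the set $H$ of $\tau\in G$ such that $\id\res_{\acl(a')}\cup\{\dtup\mapsto\tau\cdot\dtup\}$ is elementary (for $\dtup$ a generic fibre point) is a subgroup of $G$; transporting the $\sigma$-twisted points given by the $\theta_\tau$ along an elementary map over $\acl(a')$ shows that $H$ meets every conjugacy class of $G$, whence $H=G$, because a finite group is never the union of the conjugates of a proper subgroup. It is this, and not any direct definable count of components, that forces all generics of $\tilde{D}_{a'}$ to be conjugate over $\acl(a')$, i.e.\ $\DM(\tilde{\phi}(\xtup,a'))=1$ uniformly. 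Without the Galois closure, the compactness claim, and the conjugacy-class argument, there is no route from ``$TA$ exists'' to the uniform degree-1 conclusion, so the gap in your proposal is essential, even though your intended strategy (following \cite{HaHr07}) is the right one.
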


\begin{proof}
  Let $p,q$ be strongly minimal types such that $p\not\perp q$ and $q$ is
  uniformly of Morley degree 1. We have to show that $p$ is uniformly of
  Morley degree 1. As in the proof of Lemma \ref{lem:charDMPsm}, using Lemma
  \ref{lem:uniformMD1}, we may assume that $p=\tp(\beta /M)$ is a finite cover
  of $q=\tp(b/M)$, and even that $b$ is a subtuple of $\beta$.

  Let $\beta_1=\beta,\beta_2,\ldots,\beta_n$ be the $Mb$-conjugates of
  $\beta$, and put $\tuple{\beta}_1=(\beta_1,\ldots,\beta_n)$. We may assume
  that $p=\tp(\tuple{\beta}_1/M)$. Then, if
  $\tuple{\beta}_1,\ldots,\tuple{\beta}_r$ are the $Mb$-conjugates of
  $\tuple{\beta}_1$, any $\tuple{\beta}_i$ is of the form
  $$\tuple{\beta}_i=(\beta_{\tau_i(1)},\ldots\beta_{\tau_i(n)})$$ for some
  $\tau_i\in S_n$. Moreover, by construction, $$G:=\{\tau_i\,\mid\, 1\leq
  i\leq r\}\leq S_n$$ is a subgroup of $S_n$.

  Since $q$ is uniformly of Morley degree 1 by assumption, there are
  $\L$-formulas $\tilde{\phi}(\xtup,y)$, $\phi(x,y)$, a parameter $a\in M$, a
  formula $\theta(y)\in r(y)=\tp(a)$, and a definable function $\pi$ from the
  $\xtup$-sort to the $x$-sort satisfying the following properties:
  \begin{itemize}
    \item[(i)] $\tilde{\phi}(\xtup,a)$ and $\phi(x,a)$ are strongly minimal,
      with generic types $p$ and $q$, respectively;
    \item[(ii)] whenever $\models\theta(a')$, $\phi(x,a')$ is strongly
      minimal, and $\pi$ defines a surjection from $\tilde{\phi}(\xtup,a')$
      onto $\phi(x,a')$;
    \item[(iii)] whenever $\models\theta(a')$, denoting $\tilde{D}_{a'}$ and
      $D_{a'}$ the sets defined by $\tilde{\phi}(\xtup,a')$ and $\phi(x,a')$,
      resp., all fibres of the map $\pi:\tilde{D}_{a'}\twoheadrightarrow D_{a'}$
      are regular $G$-orbits for the definable action of $G$ (by permutation)
      on the $\xtup$-sort, i.e.\ if $b'\in D_{a'}$, then
      $\mid\!\!\pi^{-1}(b')\!\!\mid=|G|$ and if
      $\pi^{-1}(b')=\{\tuple{\beta}'_1,\ldots,\tuple{\beta}'_s \}$, letting
      $\tuple{\beta}'_1=(\beta'_1,\ldots,\beta'_n)$, any $\tuple{\beta}'_i$ is
      of the form $(\beta'_{\tau(1)},\ldots,\beta'_{\tau(n)})$ for some
      $\tau\in G$.
  \end{itemize}

  \begin{claim*}
    For any $\tau\in G$, there is an $\L$-formula $\theta_{\tau}(y)\in r(y)$
    implying $\theta(y)$ such that, in $TA$, the following implication holds:
    $$\theta_{\tau}(y)\wedge\sigma(y)=y\vdash\exists
    x\,\exists\xtup\left[\pi(\xtup)=x\wedge\sigma(x)=
    x\wedge\tilde{\phi}(\xtup,y)\wedge\bigwedge_{i=1}^n\sigma(x_i)=
    x_{\tau(i)}\right]$$
  \end{claim*}

  \begin{proof}[Proof of the claim]
    By compactness, it is enough to show that
    $$r(y)\cup\{\sigma(y)=y\}\vdash\exists
    x\,\exists\xtup\left[\pi(\xtup)=x\wedge\sigma(x)=x
    \wedge\tilde{\phi}(\xtup,y)
    \wedge\bigwedge_{i=1}^n\sigma(x_i)=x_{\tau(i)}\right].$$
    Let $(M',\sigma)\models TA$ and $a'\in M'$ such that $\sigma(a')=a'$ and
    $\models r(a')$. Then, $\tilde{D}_{a'}$ is strongly minimal (in $T$). Let
    $\tuple{\beta}'=(\beta'_1,\ldots,\beta'_n)$ be generic in $\tilde{D}_{a'}$
    over $M'$. Then
    $\tau\cdot\tuple{\beta}':=(\beta_{\tau(1)},\ldots,\beta_{\tau(n)})$ is
    also generic in $\tilde{D}_{a'}$ over $M'$. Thus,
    $\tp_{\L}(\tuple{\beta}'/M')=\tp_{\L}(\tau\cdot\tuple{\beta}'/M')$. Since
    $\sigma(a')=a'$, it follows that
    $\sigma\upharpoonright_{\acl(a')}\cup\{\beta'_i\mapsto\beta'_{\tau(i)}\,
    \mid\, 1\leq i\leq s\}$ is an elementary map. So, in some elementary
    extension of $(M',\sigma)$, there is such a tuple $\tuple{\beta}'$ such
    that $\sigma(\beta'_i)=\beta'_{\tau(i)}$ for all $i$. Moreover, by
    construction, $\pi(\tuple{\beta}')=\pi(\sigma(\tuple{\beta}'))$, so
    $\sigma(\pi(\tuple{\beta}'))=\pi(\tuple{\beta}')$. This proves the claim.
  \end{proof}

  Let $\tilde{\theta}(y)=\bigwedge_{\tau\in G}\theta_{\tau}(y)$. We will show
  that $\tilde{\phi}(\xtup,a')$ is strongly minimal whenever
  $\models\tilde{\theta}(a')$. This will finish the proof.

  From now on, we may proceed exactly as in \cite{HaHr07}.

  By (ii), $\RM(\tilde{D}_{a'})=1$. Let $\dtup$ and $\dtup'$ be generic in
  $\tilde{D}_{a'}$ over $a'$. We have to show that
  $\stp(\dtup/a')=\stp(\dtup'/a')$. Since $D_{a'}$ is strongly minimal by (i),
  $\stp(\pi(\dtup)/a')=\stp(\pi(\dtup')/a')$, so we may assume that
  $\pi(\dtup)=\pi(\dtup')=d$.

  By (iii), $G$ acts regularly on $\pi^{-1}(d)$, via
  $$\tau\cdot\dtup_\rho=\tau\cdot(\delta_{\rho(1)},\ldots,\delta_{\rho(n)})=\dtup_{\tau\rho} \,\,\,\,\text{Ê(for $\rho\in G$)}.$$
  In particular, there is $\rho'\in G$ such that $\rho'\cdot\dtup=\dtup_{\rho'}=\dtup'$.
  Consider $$H:=\left\{\tau\in
  G\,\mid\,\id\res_{\acl(a')}\cup\{\dtup\mapsto\tau\cdot\dtup\}\text{ is an
  elementary map}\right\}.$$ Note that $\tau$ is in $H$ iff there is an element $\rho\in G$ such that $\id\res_{\acl(a')}\cup\{\rho\cdot\dtup\mapsto(\tau\rho)\cdot\dtup\}$ is an elementary map  iff 
  $\id\res_{\acl(a')}\cup\{\rho\cdot\dtup\mapsto(\tau\rho)\cdot\dtup\,\mid\,
  \rho\in G\}$ is an elementary map. In particular, $H\leq G$ is a subgroup.

  Let $\id\neq\tau\in G$.  We may embed $(\acl(a'),\id)$ into
  some model $(M',\sigma)\models TA$. By the claim, there is $\tuple{\beta}^{\tau}\in
  \tilde{D}_{a'}$ such that
  $\sigma(\tuple{\beta}^{\tau})=\tau\cdot\tuple{\beta}^{\tau}=(\beta^\tau_{\tau(1)},\ldots,\beta^\tau_{\tau(n)})$. As
  $\tau\neq\id$, $\tuple{\beta}^{\tau}\not\in\acl(a')$, so
  $b^{\tau}=\pi(\tuple{\beta}^{\tau})$ is generic in $D_{a'}$ over $a'$. Let
  $\alpha:\acl(a'd)\cong\acl(a'b^{\tau})$ be an elementary map extending
  $\id\res_{\acl(a')}\cup\{d\mapsto b^{\tau}\}$. Clearly,
  $\sigma'=\alpha^{-1}\circ\sigma\circ\alpha$ is an elementary permutation of
  $\acl(a'd)$ fixing $\acl(a')\cup\{d\}$. Moreover, if
  $\alpha^{-1}(\tuple{\beta}^{\tau})=\tau'\cdot\dtup$, then $\alpha^{-1}(\rho\cdot\tuple{\beta}^{\tau})=(\tau'\rho)\cdot\dtup$ for every $\rho\in G$, and an easy
  calculation shows that $\tau'\tau\tau'^{-1}\in H$; indeed,
  $$(\alpha^{-1}\circ\sigma\circ\alpha)(\dtup_{\tau'})=
  \alpha^{-1}(\sigma(\tuple{\beta}^{\tau}))=
  \alpha^{-1}(\tau\cdot\tuple{\beta}^{\tau})=(\tau'\tau)\cdot\dtup=\left( (\tau'\tau\tau'^{-1})\tau'\right)\cdot\dtup=\tau'\tau\tau'^{-1}\cdot\dtup_{\tau'}.$$

  Thus, all conjugacy classes in $G$ are represented in $H$, showing that
  $H=G$ (see \cite{HaHr07}). This completes the proof.
\end{proof}

\begin{corollary}\label{cor:TADMP}
  Let $T$ be stable, and assume $TA$ exists. Let $X$ be definable and
  $T'=\mathrm{Th}(X_{ind})$ be the theory of the induced structure on $X$.
  Assume $T'$ is almost $\aleph_1$-categorical.

  Then $T'$ has the DMP. In particular, any group of finite Morley rank
  interpretable in $T$ (considered with the full induced structure) has the
  DMP.
\end{corollary}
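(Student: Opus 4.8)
The plan is to reduce the DMP for $T'$ to the assertion that every strongly minimal type of $(T')^{eq}$ is uniformly of Morley degree $1$, and then to obtain that assertion by transporting uniformity of Morley degree $1$ across non-orthogonality classes via Proposition~\ref{prop:finitecover}, starting from the (trivial) fact that the generics of the finitely many witnessing strongly minimal sets are uniformly of Morley degree $1$.

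First I would set up the reduction. Since $T'$ is almost $\aleph_1$-categorical iff $(T')^{eq}$ is, and since DMP for $(T')^{eq}$ yields DMP for $T'$ (a defining formula $\theta(y)$ for $\{a : \RDM(\phi(x,a))=(r,d)\}$ obtained in $(T')^{eq}$ for a real $\L$-formula $\phi$ cuts out an $\Aut$-invariant subset of the home sort, hence is equivalent to a real formula), it is enough to establish the DMP for $(T')^{eq}$. By Fact~\ref{fact:PillayPong}, Morley rank in $(T')^{eq}$ is finite, definable and equal to $\U$-rank, so Lemma~\ref{lem:charDMPsm} applies and reduces the DMP for $(T')^{eq}$ to the statement that every strongly minimal type is uniformly of Morley degree $1$.

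Next, using almost $\aleph_1$-categoricity I would name finitely many parameters and fix strongly minimal sets $D_1,\dots,D_k$, defined by formulas $\psi_i$ over $\emptyset$, such that every non-algebraic type is non-orthogonal to some $D_i$. The generic type of each $D_i$ is uniformly of Morley degree $1$ in the cheapest possible way: the constant family $\psi_i$ witnesses this, since $\psi_i$ is strongly minimal and hence of Morley degree $1$. Now let $p$ be an arbitrary strongly minimal type of $(T')^{eq}$. Then $p\not\perp D_i$ for some $i$, so Proposition~\ref{prop:finitecover} transfers uniformity of Morley degree $1$ from the generic of $D_i$ to $p$. Thus every strongly minimal type is uniformly of Morley degree $1$, giving the DMP for $(T')^{eq}$ and hence for $T'$.

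The main obstacle is to check that Proposition~\ref{prop:finitecover} is genuinely applicable to $T'$, which requires $T'$ to be stable and $(T')A$ to exist. Stability is immediate, as $X_{ind}$ is the induced structure on a definable set in the stable theory $T$. The existence of $(T')A$ is the delicate point, and this is exactly where the hypothesis that $TA$ exists is used: I would fix a monster $(\mathfrak{C},\sigma)\models TA$, reduce to the case that $X$ is $\emptyset$-definable (naming the defining parameters and arranging them to lie in the fixed structure, which is the routine part of the setup), note that $\sigma$ then preserves $X$ and restricts to an $\L'$-automorphism $\sigma_X$ of $X_{ind}$, and show that $(X_{ind},\sigma_X)$ is an existentially closed model of $(T')_\sigma$, so that its theory is the model companion $(T')A$. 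The verification of existential closedness is where the work lies: one uses that $(\mathfrak{C},\sigma)$ is existentially closed in $T_\sigma$ together with the stable embeddedness of $X$ in $T$ to solve inside $X_{ind}$ any $(\L'\cup\{\sigma\})$-system solvable in an extension. Finally, the ``in particular'' clause is immediate: for a group $G$ of finite Morley rank interpretable in $T$, the theory $\Th(G_{ind})$ is almost $\aleph_1$-categorical by Fact~\ref{fact:Lascar}, so the main statement applies and $G_{ind}$ has the DMP.
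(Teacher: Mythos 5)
Your overall skeleton --- reducing the DMP for $T'$ via Fact~\ref{fact:PillayPong} and Lemma~\ref{lem:charDMPsm} to showing that every strongly minimal type is uniformly of Morley degree 1, seeding with the generics of the witnessing sets $D_i$ over $\emptyset$, and transporting across non-orthogonality via Proposition~\ref{prop:finitecover} --- is exactly the paper's. The divergence, and the gap, is \emph{where} you apply Proposition~\ref{prop:finitecover}: you apply it inside $T'$, which requires the model companion $T'A$ to exist, whereas the hypothesis only gives $TA$. Your argument that $T'A$ exists is broken at its key inference: from ``$(X_{ind},\sigma_X)$ is an existentially closed model of $T'_{\sigma}$'' you conclude ``so its theory is the model companion $T'A$''. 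That does not follow. The model companion exists if and only if the class of \emph{all} existentially closed models of $T'_{\sigma}$ is elementary, and it is then the theory of that whole class; exhibiting a single e.c.\ model says nothing about axiomatisability of the class (note also that the complete theory of one structure could hardly be the model companion in general --- model companions in this setting, like $\ACFA$, are typically incomplete). Moreover, even the e.c.-ness of $(X(\mathfrak{C}),\sigma_X)$ is not routine: it requires amalgamating an arbitrary $T'_{\sigma}$-extension (a mere substructure extension, not an elementary one) with $(\mathfrak{C},\sigma)$ over $(X(\mathfrak{C}),\sigma_X)$, and stable embeddedness alone does not obviously provide this.

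This gap is not a missing detail but the whole content of the statement: since $T'$ is almost $\aleph_1$-categorical, Corollary~\ref{cor:HaHiHr} says that $T'A$ exists \emph{if and only if} $T'$ has the DMP, so your ``delicate point'' is literally equivalent to the conclusion you are trying to prove --- nothing has been reduced. The paper's closing section (Ramsey's example) shows in addition that the analogous transfer ``$TA$ exists $\Rightarrow$ $\mathrm{Th}(X_{ind})A$ exists'' genuinely fails for type-definable $X$, so one should not expect a soft argument. The paper sidesteps all of this: because $X$ is definable and stably embedded with its full induced structure, the definable subsets of powers of $X$ are the same whether computed in $T$ or in $T'$, so strongly minimal types of $T'$ are strongly minimal types of $T$ concentrated on $X$, with the same non-orthogonality relation and the same notion of being uniformly of Morley degree 1. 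Proposition~\ref{prop:finitecover} is then applied in the ambient theory $T$, where $TA$ exists by hypothesis, and the conclusion is transferred back to $T'$ and fed into Lemma~\ref{lem:charDMPsm}. If you reroute your transfer step this way and delete the attempted construction of $T'A$, your proof becomes the paper's.
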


\begin{proof}
  It is easy to see that if $T'_B$ has the DMP for some parameter set $B$,
  then $T'$ has the DMP (see \cite{Hru92}). We may thus assume that every
  strongly minimal type $p'$ in $T'$ is non-orthogonal to some strongly
  minimal type $q'$ over $\emptyset$. Trivially, such a $q'$ is uniformly of
  Morley degree 1, and so $p'$ is uniformly of Morley degree 1 by Proposition
  \ref{prop:finitecover}. This shows that $T'$ has the DMP, by Lemma
  \ref{lem:charDMPsm}.
\end{proof}

\begin{corollary}\label{cor:HaHiHr}
  Let $T$ be almost $\aleph_1$-categorical, e.g. $T=\mathrm{Th}(G)$ for $G$ a
  group of finite Morley rank. Then $TA$ exists if and only if $T$ has the
  DMP.
\end{corollary}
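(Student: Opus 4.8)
The plan is to prove the two implications separately. For the direction ``$TA$ exists $\Rightarrow$ $T$ has the DMP'', I would simply invoke Corollary~\ref{cor:TADMP}: an almost $\aleph_1$-categorical theory has finite Morley rank (Fact~\ref{fact:PillayPong}) and is therefore stable, so taking $X$ to be the home sort—whence $T'=T$ with its full induced structure—the hypotheses of Corollary~\ref{cor:TADMP} are met, and we conclude that $T$ has the DMP. This disposes of the easy direction.

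The substance is the converse, ``DMP $\Rightarrow$ $TA$ exists'', following the strategy of \cite{HaHr07} for strongly minimal $T$. After Morleyising we may assume $T$ has quantifier elimination, which affects neither the DMP nor the existence of $TA$. To show $TA$ exists it suffices to exhibit a first-order axiomatisation, in the language $\mathcal{L}\cup\{\sigma\}$, of the existentially closed models of $T_\sigma$. Following the standard geometric approach (as in \cite{HaHr07}), I would write down an axiom scheme indexed by $\mathcal{L}$-formulas $\phi(x,y)$, $\psi(x,x',y,y')$ together with rank--degree data: the corresponding axiom asserts that for every parameter $a$, if $\psi(x,x',a,\sigma(a))$ defines a \emph{generic correspondence} between $\phi(x,a)$ and its $\sigma$-twist $\phi(x',\sigma(a))$—that is, a subset of $\phi(x,a)\times\phi(x',\sigma(a))$ projecting onto each factor with full Morley rank and Morley degree $1$—then there is $c$ with $\psi(c,\sigma(c),a,\sigma(a))$.

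Two ingredients make this scheme genuinely first-order. Definability of Morley rank (Fact~\ref{fact:PillayPong}) lets us express ``the projections have full rank'' uniformly in $a$, while the DMP, through Lemma~\ref{lem:charDMPsm} and the bound of Lemma~\ref{lem:bddDegree}, lets us express the degree conditions uniformly in $a$. With the scheme in hand, soundness (every existentially closed model satisfies the axioms) is routine: a generic correspondence over parameters $a,\sigma(a)\in M$ describes a difference structure extending $(M,\sigma)$, so the required point exists in some extension and hence, by existential closedness, in $M$. For completeness one takes $(M,\sigma)\models T_\sigma$ satisfying the scheme and a quantifier-free $\mathcal{L}_\sigma$-type over $M$ realised in some extension; using quantifier elimination one reduces, by the standard reparametrisation replacing a variable $x$ by the tuple $(x,\sigma x,\dots,\sigma^{n-1}x)$, to realising a single generic correspondence, and the matching axiom produces a point $c\in M$. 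The degree-$1$ clause, available precisely because of the DMP, guarantees that the generic types involved are stationary, so that declaring $\sigma(c)$ to be the second coordinate is consistent and $c$ genuinely realises the prescribed type over $M$.

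The main obstacle is this last realisation step, specifically checking that the degree-$1$ (DMP) control, exploited in \cite{HaHr07} in the strongly minimal case, still suffices once $T$ is merely almost $\aleph_1$-categorical. The difficulty is that non-algebraic types may now spread across several of the strongly minimal dimensions $D_i$, so the stationarity and type-matching arguments must be run through the decomposition into dimensions rather than over a single strongly minimal set. Here I would lean on Fact~\ref{fact:PillayPong} (finite, definable Morley rank equal to $\U$-rank) to reduce, by the finite-cover manipulations already used in Lemma~\ref{lem:charDMPsm} and Proposition~\ref{prop:finitecover}, to correspondences between uniformly-degree-$1$ pieces, where the Hasson--Hrushovski argument applies essentially verbatim.
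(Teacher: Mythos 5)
Your proposal is correct and is essentially the paper's own proof: the direction ($TA$ exists $\Rightarrow$ DMP) is, exactly as you say, Corollary~\ref{cor:TADMP} applied to the home sort, while the direction (DMP $\Rightarrow$ $TA$ exists) is the geometric-axioms result of Chatzidakis and Pillay, which the paper handles by citing \cite{ChPi98} together with Fact~\ref{fact:PillayPong} and which your axiom scheme simply reproves. The only comment worth making is that the ``main obstacle'' of your last paragraph is not an obstacle: the \cite{ChPi98} argument needs no strong minimality and no decomposition into dimensions---finite, definable Morley rank (Fact~\ref{fact:PillayPong}) plus the DMP make the scheme first-order and carry the soundness and completeness arguments verbatim, whereas the finite-cover and Hasson--Hrushovski machinery you appeal to belongs to the converse direction (Proposition~\ref{prop:finitecover}), which you have already disposed of via Corollary~\ref{cor:TADMP}.
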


\begin{proof}
  Using Fact \ref{fact:PillayPong}, it is easy to see that $TA$ exists if $T$
  has the DMP (see \cite{ChPi98}). The other direction is the previous
  corollary.
\end{proof}

\section{Generic automorphisms of compact complex manifolds}
In this section, we apply the results of Section~\ref{S:TA-DMP} to compact
complex manifolds, deducing in particular that definable groups have
the DMP. We use the result of Radin \cite{Rad04}
that topological irreducibility is definable in families. Unlike in algebraic
geometry, it is in general not true in compact complex manifolds that
irreducibility implies Morley degree 1, so DMP does not follow immediately.
Rather, we note that definability of irreducibility in the theory of a
Noetherian topological structure (defined below) suffices, by a
straightforward generalisation of the well-known existence of geometric axioms
for $\ACFA$, to prove axiomatisability of generic automorphisms; the results
of Section~\ref{S:TA-DMP} then apply.

We take the following definitions from \cite{ZilberZariski}.

\begin{definition}
  \begin{itemize}
    \item A \defnstyle{topological structure} consists of a set $S$ and a
      topology on each $S^n$ such that
      \begin{enumerate}[(i)]
	\item The co-ordinate projection maps $\pr : S^n \maps S^m$ are
	  continuous;
	\item The inclusion maps
	  \[ \iota : S^m \maps S^n; (x_1,\ldots,x_m) \mapsto
	  (x_1,\ldots,x_m,c_{m+1},\ldots,c_n) \]
	  are continuous.
	\item The diagonal $\Delta\subseteq S^2$ is closed.
	\item Singletons $\{\atup\}\subseteq S^n$ are closed.
      \end{enumerate}
    \item A topological structure is \defnstyle{Noetherian} iff the closed
      sets satisfy the descending chain condition.

    \item A topological structure is \defnstyle{$\omega_1$-compact} iff
      whenever $(C_i)_{i\in I}$ is a countable set of closed sets with the
      property that $\bigcap_{i\in I_0} C_i \neq \emptyset$ for any finite
      $I_0\subseteq I$, then $\bigcap_{i\in I} C_i \neq \emptyset$.

    \item We consider a topological structure $S$ as a first-order structure
      in the language having a predicate for each closed set. Since singletons
      are closed, models $S'$ of $\Th(S)$ are precisely elementary extensions
      of $S$. We consider such $S'$ as topological structures, with closed
      sets the fibres with respect to co-ordinate projections of the
      interpretations in $S'$ of the closed sets of $S$. It is easy to see
      that if $S$ is $\omega_1$-compact and Noetherian, then $S'$ is also
      Noetherian.

    \item A \defnstyle{constructible} set is a finite boolean combination of
      closed sets, so $\Th(S)$ has quantifier elimination iff every definable
      set is constructible. A constructible set is \defnstyle{irreducible} iff
      it is not the union of two relatively closed proper subsets.

    \item Suppose now that $T$ is the theory of an $\omega_1$-compact
      Noetherian topological structure $S$ with quantifier elimination.

      If $X$ is an irreducible constructible set defined over a model
      $S'\elres S$, the \defnstyle{generic type} over $S'$ of $X$ is the
      complete type
      \[ p_X^{S'}(x) = \{ x\in O \;|\; O \; \textrm{a relatively open subset of
      $X$ defined over $S'$} \} .\]

      Conversely, if $S'' \elres S'$ and $a\in S{''}^n$, the \defnstyle{locus}
      of $a$ over $S'$ ($\locus(a/S')$) is the smallest closed set defined
      over $S'$ containing $a$.

      We say that \defnstyle{irreducibility is definable} in $T$ iff for any
      $S'\models T$ and any constructible $C(x;y) \subseteq S{'}^{n+m}$, the
      set
      \[ \{ y \;|\; \textrm{$C(S',y)$ is irreducible} \} \subseteq S{'}^m \]
      is definable over $S'$.
  \end{itemize}
\end{definition}

A compact complex manifold $X$ can be considered as an $\omega_1$-compact
Noetherian topological structure, where the closed sets are the complex
analytic subsets.

It is a result of Zilber \cite[Theorem 3.4.3]{ZilberZariski} that the
structure has quantifier elimination and finite Morley rank.

\begin{proposition}\label{prop:TANoeth}
  Let $T$ be the theory of an $\omega_1$-compact Noetherian topological
  structure with quantifier elimination, in which irreducibility is definable.
  Then $TA$ exists.
\end{proposition}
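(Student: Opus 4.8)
The plan is to prove that $TA$ (the model companion of $T_\sigma$) exists by giving an explicit geometric axiomatisation, exactly parallel to the well-known geometric axioms for $\ACFA$. The key point is that the standard $\ACFA$ axioms express: (1) that $(M,\sigma)$ is a model of $T_\sigma$ (here $M \models T$, and $\sigma$ is an automorphism), and (2) a ``genericity'' condition stating that for every pair of irreducible closed sets $U \supseteq V$ with $V$ projecting onto $U$ appropriately, there is a point realising the configuration together with its $\sigma$-image. In the $\ACFA$ setting, the work in phrasing this as a first-order schema comes from the fact that irreducibility and dimension are definable in families over $\ACF$; I would mimic this, replacing ``irreducible variety'' by ``irreducible constructible set'' and using the hypothesis that irreducibility is definable in $T$ to make the relevant conditions into first-order schemas.

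First I would set up the axiom schema. For each pair of constructible sets $U(x,c)$ and $V(x,y,c)$ (in variables $x$ and $x,y$ respectively, with parameters), I would write down an axiom asserting: if $U(x,c)$ is irreducible, $V(x,y,c)$ is irreducible, the projection $V \maps U$ (to the $x$-coordinates) is dominant (its image is dense in $U$), and $\sigma(U) = U^\sigma$ is the corresponding $\sigma$-transform, then there exists $a$ with $(a,\sigma(a)) \in V(x,y,c)$ and $a$ generic in $U(x,c)$. The hypotheses ``$U$ irreducible'', ``$V$ irreducible'', and ``the projection is dominant'' must each be expressed by first-order formulas in the parameters $c$; definability of irreducibility gives the first two directly, and dominance of the projection can be handled by a dimension/closure computation that is likewise first-order using definability of Morley rank (available since the structure has finite Morley rank by Zilber's theorem). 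I would then let $TA$ be $T_\sigma$ together with this schema.

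The main work is to verify that this schema axiomatises the model companion, i.e.\ that (a) every model of $TA$ is existentially closed in $T_\sigma$, and (b) every model of $T_\sigma$ embeds into a model of $TA$. For (b) I would use a standard chain argument: given $(M,\sigma) \models T_\sigma$ and a ``geometric'' extension problem (a consistent system prescribing a point and its $\sigma$-image over $M$), I would realise it in an elementary extension as a $T$-structure, extend $\sigma$ to this new point using the genericity/irreducibility data, and iterate $\omega$ times, taking the union; $\omega_1$-compactness and the Noetherian property ensure the limit is again a model of $T$ with a total automorphism. For (a), I would show that any existential formula over $(M,\sigma)$ consistent with $T_\sigma$ is already realised in $M$, by reducing (via quantifier elimination in $T$) an arbitrary quantifier-free condition on $(a, \sigma(a), \ldots, \sigma^k(a))$ to the geometric configuration handled by the schema.

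The hard part will be the bookkeeping in translating ``the projection $V \maps U$ is dominant and generically the transform is compatible with $\sigma$'' into a first-order statement that correctly captures existential closedness, and in particular ensuring that the genericity clause produces a point whose type is \emph{generic} in $U$ rather than merely some point of $V$. This is precisely where definability of irreducibility (Radin's theorem, in the compact complex case) is essential: it guarantees that the irreducibility hypotheses in the schema define first-order conditions on the parameters, so that the schema is genuinely a first-order theory. Once the schema is first-order and the back-and-forth/chain construction goes through, existence of the model companion follows by the standard criterion that $TA$ is consistent, model-complete, and every model of $T_\sigma$ extends to a model of $TA$.
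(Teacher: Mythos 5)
Your overall strategy---ACFA-style geometric axioms, made first-order using definability of irreducibility, then verifying that these axioms pick out the existentially closed models of $T_\sigma$---is the same as the paper's. However, there are two genuine gaps in how you set up the axiom schema, and they are exactly the points where the work lies.

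First, the conclusion of your axiom, ``there exists $a$ with $(a,\sigma(a))\in V$ and $a$ \emph{generic} in $U$,'' is not a first-order condition, and no appeal to definability of irreducibility can make it one: that hypothesis only makes the \emph{hypotheses} of the schema (irreducibility of $U$ and $V$) first-order, which you correctly note, but it says nothing about the conclusion. Worse, since singletons are closed in a topological structure, no point of a model is generic in $U$ over that model unless $U$ is a singleton, so your axioms read literally are unsatisfiable for any non-trivial $U$. The fix is to weaken the conclusion: the paper's axiom (ii) quantifies additionally over proper closed subsets $W\subsetneq V$ and asserts only that there is a point $(a,\sigma(a))\in V\setminus W$. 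This schema over $W$ is what an existentially closed model can actually satisfy (soundness, via uniqueness of generic types of irreducible closed sets), and it is exactly what the existential-closedness verification needs: given a point $(a,\sigma(a))\in V'(S')\setminus W'(S')$ in an extension, one takes $V:=\locus((a,\sigma(a))/S)$, $U:=\locus(a/S)$, $W:=W'\cap V$ and applies the axiom. Note also that $W$ must range over proper closed subsets of $V$, not of $U$: a point with $a$ merely avoiding proper closed subsets of $U$ could still land in a proper closed $W\subsetneq V$ that itself projects dominantly onto $U$, so an axiom phrased as avoidance in $U$ would be too weak to prove that models of $TA$ are existentially closed.

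Second, your method for expressing dominance of the projection invokes definability of Morley rank, justified by ``finite Morley rank by Zilber's theorem.'' That theorem is about compact complex manifolds; the proposition concerns arbitrary $\omega_1$-compact Noetherian topological structures with QE and definable irreducibility, where finite Morley rank is not among the hypotheses (and even finite Morley rank would not give definability of rank---the failure of such definability is the whole theme of the paper's DMP discussion). The dominance conditions must instead be handled topologically, as in the paper's formulation ``$U$ is the closure of $\pr_1(V)$ and $\sigma(U)$ is the closure of $\pr_2(V)$'': the containment $\pr_1(V)\subseteq U$ is directly first-order, and density can be folded into the schema because $\pr_1(V)$ is constructible (by QE), so the existence of a closed witness $F$ with $\emptyset\neq U\setminus F\subseteq\pr_1(V)$ is an existential hypothesis over a further family of closed sets, which pulls out in front of the implication as a universal quantifier. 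No rank machinery is available or needed.
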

\begin{proof}

  This is a straightforward generalisation of the case of algebraically closed
  fields.

  $TA$ is axiomatised by
  \begin{enumerate}[(i)]
    \item $(S,\sigma)\models T_\sigma$
    \item If $U$ and $V\subseteq U\times\sigma(U)$ are closed irreducible sets
      in $S$ such that $V$ projects generically onto $U$ and $\sigma(U)$ (i.e.
      $U$ is the closure of $\pr_1(V)$ and $\sigma(U)$ is the closure of
      $\pr_2(V)$), and if $W\subsetneq V$ is proper closed in $V$, then there
      exists a point $(a,\sigma(a))\in V\setminus W$.
  \end{enumerate}

  By definability of irreducibility, (ii) is indeed first-order expressible.

  Just as in \cite[(1.1)]{ChatHrushACFA}, we find that $TA$ axiomatises the
  class of existentially closed models of $T_\sigma$; indeed: by uniqueness of
  generic types of irreducible sets, any existentially closed model satisfies
  (ii). For the converse, by quantifier elimination and closedness of equality
  it suffices to see that if $(S,\sigma)\models TA$ and $(S',\sigma)\models
  T_\sigma$ is an extension (so $S'\elres S$), and if $V'$ is closed
  irreducible in $S$ and $W$ is a proper closed subset, then if there exists
  $a\in S'$ such that $(a,\sigma(a))\in V'(S')\setminus W'(S')$, then already
  there exists such an $a\in S$. But indeed, this follows from (ii) on taking
  $V := \locus((a,\sigma(a))/S)$, taking $U := \locus(a/S)$, and taking
  $W:=W'\cap V$.
\end{proof}

In particular, then

\begin{corollary}\label{cor:CCMA}
  Let $T$ be the theory of a compact complex manifold. Then $TA$ exists.
\end{corollary}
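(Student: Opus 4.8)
The plan is to obtain this as an immediate application of Proposition~\ref{prop:TANoeth}, so that the proof reduces entirely to checking its three hypotheses for $T=\Th(X)$, where $X$ is a compact complex manifold regarded as a topological structure whose closed sets are the complex analytic subsets. Proposition~\ref{prop:TANoeth} guarantees the existence of $TA$ as soon as $T$ is the theory of an $\omega_1$-compact Noetherian topological structure with quantifier elimination in which irreducibility is definable; thus no new argument is required beyond verifying that compact complex manifolds fall under its scope.

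First I would note that two of the three hypotheses are already in place. That a compact complex manifold, so construed, is an $\omega_1$-compact Noetherian topological structure is recorded in the discussion immediately preceding the corollary: Noetherianity is the descending chain condition on analytic subsets, and $\omega_1$-compactness follows from compactness of $X$ together with the finite-intersection property for countable families of analytic sets. Quantifier elimination (and, incidentally, finite Morley rank) is precisely Zilber's result \cite[Theorem 3.4.3]{ZilberZariski}. With these two points cited, the sole remaining hypothesis is definability of irreducibility in $T$.

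The hard part — the one genuinely substantive input — is this last condition, and it is supplied by the theorem of Radin \cite{Rad04}, which asserts exactly that topological irreducibility is definable in families of analytic sets. In the terminology of the definition above, this says that for any constructible $C(x;y)$ the set $\{y \mid C(S',y) \text{ is irreducible}\}$ is definable over $S'$, which is verbatim the hypothesis needed. Having thereby confirmed all three conditions, I would invoke Proposition~\ref{prop:TANoeth} to conclude that $TA$ exists. The whole difficulty is concentrated in the definability of irreducibility, which is external to this paper; the remaining verifications are either already stated in the text or routine consequences of compactness and the Noetherian property.
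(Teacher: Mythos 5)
Your proposal is correct and is essentially the paper's own argument: the corollary is stated as an immediate consequence of Proposition~\ref{prop:TANoeth}, with its hypotheses supplied exactly as you say --- the compact complex manifold viewed as an $\omega_1$-compact Noetherian topological structure, quantifier elimination from Zilber's theorem, and definability of irreducibility from Radin's result. Nothing further is needed.
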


Combining with Corollary~\ref{cor:TADMP}, we obtain
\begin{corollary}\label{cor:CCMDMP}
  Let $T$ be the theory of a compact complex manifold, and suppose $T$ is
  almost $\aleph_1$-categorical. Then $T$ has the DMP.
\end{corollary}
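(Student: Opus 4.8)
The plan is to combine the two corollaries just established, Corollary~\ref{cor:CCMA} and Corollary~\ref{cor:TADMP}, so that the whole argument reduces to checking that the hypotheses of the latter are met by the full structure.

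First I would record that $T$ is stable. By Zilber's theorem (\cite[Theorem 3.4.3]{ZilberZariski}), the theory of a compact complex manifold has quantifier elimination and finite Morley rank; finite Morley rank implies $\omega$-stability, so in particular $T$ is stable. By Corollary~\ref{cor:CCMA}, $TA$ exists. Thus both the stability hypothesis and the existence of $TA$ required by Corollary~\ref{cor:TADMP} are already in hand.

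Next I would apply Corollary~\ref{cor:TADMP}, taking $X$ to be the home sort $S$ itself (equivalently the definable set $x=x$). Since the full induced structure on the whole universe is nothing other than $S$ in its original language, we have $\Th(X_{ind})=T$, and this theory is almost $\aleph_1$-categorical by hypothesis. All three hypotheses of Corollary~\ref{cor:TADMP}---that $T$ is stable, that $TA$ exists, and that $\Th(X_{ind})$ is almost $\aleph_1$-categorical---are therefore satisfied, and its conclusion yields that $T=\Th(X_{ind})$ has the DMP.

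There is essentially no obstacle here beyond bookkeeping: the substantive work lies in Corollary~\ref{cor:TADMP} (via Proposition~\ref{prop:finitecover}) and in Corollary~\ref{cor:CCMA} (via Proposition~\ref{prop:TANoeth} together with Radin's definability of irreducibility in families). The only point requiring a moment's care is the identification $\Th(X_{ind})=T$ when $X$ is the full home sort, which is what lets the almost $\aleph_1$-categoricity hypothesis transfer directly from $T$ to $\Th(X_{ind})$. One could alternatively bypass $X_{ind}$ altogether and invoke Corollary~\ref{cor:HaHiHr}: since $T$ is almost $\aleph_1$-categorical and $TA$ exists, the ``only if'' direction stated there delivers the DMP immediately.
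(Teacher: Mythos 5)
Your proof is correct and follows exactly the paper's route: the paper derives Corollary~\ref{cor:CCMDMP} precisely by combining Corollary~\ref{cor:CCMA} (existence of $TA$ for compact complex manifolds) with Corollary~\ref{cor:TADMP} applied to the full structure. Your closing observation that one may equivalently cite Corollary~\ref{cor:HaHiHr} is also fine, since its relevant direction is itself just Corollary~\ref{cor:TADMP}.
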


For the case of unidimensional $T$, this was deduced by a more direct method
from definability of irreducibility by Radin (\cite{Rad04}).

\begin{remark}
  For clarity, in this section we have worked one sort at a time; but it is
  entirely straightforward to generalise to many-sorted topological
  structures, and in particular to the structure $\mathfrak{CCM}$ which has a
  sort for each compact complex manifold and predicates for complex analytic
  subsets of powers of the sorts.
\end{remark}

\begin{question}
  Let $(G;\cdot)$ be a Zariski group, i.e. a Noetherian Zariski structure (in
  the sense of \cite{ZilberZariski}) with a group operation $\cdot$ whose
  graph is closed in $G^3$. Does $G$ necessarily have definability of
  irreducibility (and hence, by Proposition~\ref{prop:TANoeth} and
  Corollary~\ref{cor:HaHiHr}, Morley degree)? What if $G$ is assumed to be
  presmooth?
\end{question}

\section{Kummer genericity in divisible abelian groups of finite Morley}
  \label{S:fMR}

In this section, let $S$ be a definable divisible abelian group of finite
Morley rank $d$ (in some stable theory $T$). In particular, since $S$ is
divisible, it is connected, and so $\DM(S)=1$.

By rank considerations, the $n$-torsion subgroup $S[n]$ is finite for every
$n$. We continue to denote by $T = \invlim_n S[n]$ the projective limit of the
torsion subgroups.

Note that in general there might be a proper definable subgroup of $S$ that
contains all torsion elements of $S$. By the DCC for definable subgroups in
$S$, there is a smallest such group, the \emph{definable hull} of the torsion
subgroup of $S$. We denote it by $d(S_{\mathrm{tors}})$.

We use the following version of Zilber's Indecomposability Theorem for
types in commutative groups of finite Morley rank.

\begin{lemma}\label{lem:typeZIndec}
  Let $p$ be a strong type over $A$ extending $S$. Let $p^{(d)}$ be the type
  of an independent $d$-tuple of realisations of $p$, and let $q := \Sigma_*
  p^{(d)}$ be its image under the summation map $\Sigma : S^d\maps S$. Then
  $q$ is the generic type of an $\acleq(A)$-definable coset of a connected
  definable subgroup $H=H(p)$ of $S$. In particular, $H(p)$ is equal to the
  stabiliser of $q$.
\end{lemma}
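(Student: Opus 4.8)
The plan is to establish this as an application of Zilber's Indecomposability Theorem to the (strong) type $p$, in the commutative finite Morley rank setting. The key observation is that the sum of a large number of translates of the locus of a generic element of $p$ should stabilise to a coset of a subgroup: the indecomposable pieces here are the translates of $p$ (or rather the connected component of the ``type-definable subgroup generated by $p-p$''), and summing $d$ of them, where $d=\RM(S)$, is enough to reach the generated subgroup, since each genuine increase in rank costs at least one unit and $\RM(S)=d$ bounds the total.

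First I would pass to a suitable base: replacing $A$ by its algebraic closure $\acleq(A)$ does not change strong types, so I may assume $A=\acleq(A)$ and work with the stationary type $p$. I would let $a_1,\ldots,a_d$ be independent realisations of $p$ over $A$ and set $c=\Sigma(a_1,\ldots,a_d)$, so $q=\tp(c/A)$. The heart of the argument is to show that $q$ is the generic type of a coset of a connected definable subgroup. For this I would invoke the type-definable/indecomposable-set version of Zilber's theorem: the set $I$ of realisations of $p$ (or the family of translates $p-p$) is indecomposable, and the group generated by finitely many such indecomposables is a connected definable subgroup $H$, reached after finitely many summations. The rank bound $\RM(S)=d$ guarantees that $d$ summands already generate $H$: writing $X_k=\Sigma_1^k$ of the locus data, the sequence of stabilisers (equivalently, the ranks of the $k$-fold sums) is non-decreasing and bounded by $d$, so it must stabilise within $d$ steps — this is the exact analogue of Claim~\ref{claim:dom}, transported to the model-theoretic setting. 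Hence $q$ is the generic type of a single coset $c_0+H$, and $H$ is connected, so $\DM$ of this coset is $1$ and $q$ is stationary as required.

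Next I would identify the parameters and the stabiliser. The coset is defined over $\acleq(A)$ because $c$ together with the data defining $p$ lies in $\acleq(A)$ up to the summation, and the canonical parameter of the coset is fixed (setwise) by all $A$-automorphisms, hence lies in $\acleq(A)$; thus the coset is $\acleq(A)$-definable. For the final ``in particular'' clause, I would argue that $H=\Stab(q)$: since $q$ is the generic type of a coset of $H$, translation by any $h\in H$ fixes $q$, giving $H\leq\Stab(q)$; conversely $\Stab(q)$ must preserve the coset $c_0+H$ setwise and fix its generic type, and connectedness together with $\DM(q)=1$ forces $\Stab(q)$ to be contained in (the connected) $H$, giving equality. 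This is the standard identification of the stabiliser of a coset-generic type with the subgroup itself.

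The main obstacle I anticipate is correctly invoking the version of the Indecomposability Theorem for \emph{types} (or type-definable indecomposables) rather than for definable sets, and verifying that summing exactly $d=\RM(S)$ copies suffices to generate $H$ rather than merely some smaller subgroup. The rank-counting that controls this (each nontrivial increase in the generated subgroup raises Morley rank by at least one, and $\RM(S)=d$ caps the process) is the crux; I would make this precise by tracking $\RM(X_k)$ exactly as in the proof of Claim~\ref{claim:dom}, replacing Zariski closures by loci and dimensions by Morley ranks. The remaining points — connectedness of $H$ (so $\DM=1$), definability over $\acleq(A)$, and the stabiliser identification — are routine consequences of finite Morley rank stability theory.
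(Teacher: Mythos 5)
Your opening step --- the monotonicity of $n\mapsto\RM(\Sigma_*p^{(n)})$, the once-equal-always-equal observation, and the bound $\RM(S)=d$, so that $\RM(\Sigma_*p^{(d)})$ is already maximal --- coincides with the first half of the paper's proof. The gap is in what you do next. The ``type-definable version of Zilber's Indecomposability Theorem'' that you invoke as a black box is essentially the statement being proved: the paper itself introduces this lemma as its ``version of Zilber's Indecomposability Theorem for types in commutative groups of finite Morley rank'', so quoting such a theorem is circular unless you prove it or cite a precise published statement. The classical theorem applies to \emph{definable} indecomposable sets, whereas the realisation set of $p$ is only type-definable; its indecomposability, which you assert, is true for strong types but itself requires an argument (intersect the finitely many relevant $A$-conjugates of a given definable subgroup, using the chain condition, and use that a strong type determines the coset of any $\acleq(A)$-definable subgroup). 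More seriously, even granting a sumset version for type-definable sets in superstable groups (Berline--Lascar, Wagner), your conclusion does not follow from it plus rank stabilisation: such a theorem produces a connected subgroup $H$ expressed as a finite \emph{sumset} of differences of realisations, where the sums range over arbitrary, possibly dependent, tuples, while the lemma concerns the type $q=\Sigma_*p^{(d)}$ of an \emph{independent} sum. The inference ``hence $q$ is the generic type of a single coset $c_0+H$'' needs $\RM(q)=\RM(H)$, equivalently $\RM(\operatorname{Stab}(q))\geq\RM(q)$, and nothing in your sketch delivers that inequality. Your fallback --- redoing Claim~\ref{claim:dom} with ``Zariski closures replaced by loci'' --- does not transport either: that argument rests on a sum-stable irreducible Zariski closure being a subgroup, and in an abstract finite Morley rank theory there is no closure operation playing this role; the correct substitute is exactly the stabiliser statement you are missing.

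The paper closes this gap with Ziegler's theorem: since $\RM(\Sigma_*p^{(2d)})=\RM(\Sigma_*p^{(d)})$, taking independent realisations $a,b$ of $q$ and setting $c:=-a-b$, additivity of Morley rank gives that $a,b,c$ are pairwise independent over $A$, and \cite[Theorem 1]{Zie04} states precisely that the strong type of $a$ is then the generic type of a coset of a connected (type-)definable subgroup --- definable here by the DCC in groups of finite Morley rank --- whose identification with $\operatorname{Stab}(q)$ is standard. If you prefer not to cite Ziegler, you can run the stabiliser computation by hand: from $\RM(a+b/A)=\RM(q)$ and translation-invariance of rank, $a+b$ is independent from each of $a,b$ over $A$; then for independent $b,b'\models q$ and auxiliary $a\models q$ independent from $bb'$, the types of $a+b$ and $a+b'$ over $Abb'$ are both nonforking extensions of $\Sigma_*q^{(2)}$ and are the translates of $q$ restricted to $Abb'$ by $b$, respectively $b'$, which forces $b-b'\in\operatorname{Stab}(q)$ and hence $\RM(\operatorname{Stab}(q))\geq\RM(q)$. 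Either way, this pairwise-independence/stabiliser step is the crux of the lemma, and it is the step your proposal leaves unproved.
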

\begin{proof}
  It is easy to see that $n\mapsto \RM(\Sigma_* p^{(n)})$ is an increasing
  function, and that $\RM(\Sigma_* p^{(n)})=\RM(\Sigma_* p^{(n+1)})$ implies
  that $\RM(\Sigma_* p^{(n)})=\RM(\Sigma_* p^{(n+m)})$ for all $m\in\N$. Thus,
  the Morley rank of $q := \Sigma_* p^{(d)}$ is maximal among $\RM(\Sigma_*
  p^{(n)})$.

  Now let $a,b$ be independent realisations of $q$, and let $c:=-a-b$. Using
  additivity of Morley rank, we get that $a,b,c$ is pairwise independent. The
  result then follows from \cite[Theorem 1]{Zie04}.
\end{proof}

\begin{definition}
  Let $p$ be a strong type extending $S$, with $H(p)$ as in
  Lemma~\ref{lem:typeZIndec}.
  \begin{itemize}
    \item $p$ is called \emph{free} if and only if $H(p)=S$;
    \item $p$ is called \emph{Kummer-generic} if there is only one
      strong completion of the partial long type $\{ [m]x_{mn}=x_n  \mid
      m,n\in\N \}\cup p(x_1)$, and $p$ is called \emph{almost Kummer-generic}
      if this type has only finitely many strong completions.
    \item for $X$ some definable subset of $S$ of Morley degree 1, we say $X$
      is \emph{free} (respectively \emph{(almost) Kummer-generic}), if the
      unique generic type of $X$ is free (respectively (almost) Kummer-generic).
  \end{itemize}
\end{definition}

It follows from Lemma \ref{lem:typeZIndec} that $p$ is free iff the sum of $d$
independent realisations of $p$ is generic in $S$. But note that in contrast
to the case where $X$ is an irreducible subvariety of a semiabelian variety
$S$, for $X$ some definable subset of $S$ of Morley degree 1,
$\RM(\Sigma(X^d))=d$ does not in general imply that $X$ is free.

\begin{lemma}\label{lem:free-Kg}
  \begin{enumerate}
    \item Let $X\subseteq S$ be definable such that $\RDM(X)=(k,1)$.
      \begin{enumerate}
	\item $X$ is free iff $\{a\in
	  S\,\mid\,\RM(\Sigma^{-1}(a)\cap X^d)=dk-d\}$ is a generic subset of
	  $S$.
	\item $X$ is Kummer-generic iff $\DM([n]^{-1}(X))=1$ for
	  all $n\in\N$.
      \end{enumerate}
    \item Let $p$ be a strong type extending $S$.
      \begin{enumerate}
	\item If $p'$ is a translate of a non-forking extension of $p$, then $p'$ is free
	  (respectively (almost) Kummer-generic) iff $p$ is free
	  (respectively (almost) Kummer-generic).
	\item There exists a translate $p'$ of a non-forking extension of
	$p$ such that $p'$ extends $H(p)$ and $\Sigma_* p'^{(d)}$ is the
	  generic type of $H(p')=H(p)$. Moreover, the following are
	  equivalent:
	  \begin{itemize}
	    \item $p$ is (almost) Kummer-generic;
	    \item $p'$ is (almost) Kummer-generic;
	    \item $H(p)\geq S_{tors}$ and $p'$ is (almost) Kummer-generic in
	      the group $H(p)$.
	      \end{itemize}
      \end{enumerate}
  \end{enumerate}
\end{lemma}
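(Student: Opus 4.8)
The plan is to prove the items of Lemma~\ref{lem:free-Kg} in the stated order, reducing each assertion to Lemma~\ref{lem:typeZIndec} and to the definitions of free and (almost) Kummer-generic. For part (1)(a), I would observe that $X$ is free iff $\RM(\Sigma(X^d))=d$, which by additivity of Morley rank and the fibre dimension theorem is equivalent to saying that the generic fibre $\Sigma^{-1}(a)\cap X^d$ over a generic $a\in S$ has dimension $\dim(X^d)-\dim(S)=dk-d$. The content is just that $\Sigma_*p^{(d)}$ is generic in $S$ exactly when the generic fibre has the expected codimension; I would phrase this via the formula $\RM(\Sigma^{-1}(a)\cap X^d)=\RM(X^d)-\RM(\Sigma_*p^{(d)})$ for $a$ generic in the image, so the fibre has rank $dk-d$ on a generic subset of $S$ iff the image is all of $S$ up to rank. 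For part (1)(b), Kummer-genericity of the generic type $p$ of $X$ means the long type $\{[m]x_{mn}=x_n\}\cup p(x_1)$ has a unique strong completion; I would translate uniqueness of this completion for each finite level $n$ into the condition that $[n]^{-1}(X)$ has a single generic type, i.e. $\DM([n]^{-1}(X))=1$, using that the strong completions correspond to the components of the preimage permuted by the torsion $S[n]$ exactly as in the proof of Lemma~\ref{lem:KgZ}.

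For part (2)(a), the key point is that all the relevant invariants---$H(p)$, freeness, and the number of strong completions of the Kummer long type---are preserved under non-forking extension and under translation. Non-forking extension does not change the strong type up to parallelism, and $H(p)$ is defined from $\Sigma_*p^{(d)}$, whose stabiliser (a connected definable subgroup) is invariant under passing to a non-forking extension by Lemma~\ref{lem:uniformMD1}-style parallelism arguments. Translation by a fixed element $t$ sends the long type for $p$ to the long type for $p+t$ via $x_n\mapsto x_n+t_n$ where $[n]t_n=t$; since the torsion needed to witness distinct completions is unchanged, the number of strong completions is preserved, and $H(p+t)=H(p)$ because the stabiliser of a coset is translation-invariant. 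I would spell out the bijection between strong completions before and after translation/extension, which gives both the free and the (almost) Kummer-generic equivalences simultaneously.

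For part (2)(b), the existence of the translate $p'$ follows from Lemma~\ref{lem:typeZIndec}: $q=\Sigma_*p^{(d)}$ is the generic type of an $\acleq(A)$-definable coset $c+H(p)$, so choosing $t$ with $[d]t$ in that coset and translating $p$ by $-t/d$ (after a non-forking extension to the parameters defining the coset), I arrange that $\Sigma_*p'^{(d)}$ is the generic type of $H(p)$ itself, so that $H(p')=H(p)$ and $p'$ extends $H(p)$ (i.e. concentrates on $H(p)$). The equivalence of Kummer-genericity for $p$ and $p'$ is then immediate from part (2)(a). The substantive remaining equivalence is that $p'$ being (almost) Kummer-generic forces $H(p)\geq S_{\mathrm{tors}}$, and that (almost) Kummer-genericity of $p'$ in $S$ is the same as (almost) Kummer-genericity of $p'$ viewed inside the subgroup $H(p)$. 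Here the divisibility and the torsion of the ambient group enter: the strong completions of the long type live over the product of Tate modules $T=\invlim S[n]$, and for $p'$ concentrated on $H(p)$ the image subgroup $Z$ of the Kummer map lands in $T_{H(p)}$; if $H(p)\not\geq S_{\mathrm{tors}}$ then some torsion $\zeta\in S[n]\setminus H(p)$ produces infinitely many (indeed a positive-rank family of) distinct completions, contradicting almost Kummer-genericity, which is exactly what forces $H(p)\geq S_{\mathrm{tors}}$.

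The main obstacle I expect is the last equivalence in (2)(b): correctly matching the strong completions of the long type computed in the ambient $S$ against those computed inside $H(p)$, and showing that the containment $H(p)\geq S_{\mathrm{tors}}$ is both necessary and exactly the condition that makes the two notions coincide. This requires care because the inverse limit $T_{H(p)}$ sits inside $T_S$ and one must check that the torsion of $S$ not already in $H(p)$ is precisely what obstructs finiteness of the number of completions; identifying $T_{H(p)}$ as a direct summand (using $p$-divisibility and finiteness of torsion at each level, as in the splitting into primary components in Section~\ref{S:Kummer}) and tracking the Kummer image $Z$ under the inclusion is the technical heart of the argument.
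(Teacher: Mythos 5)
Your outline reproduces the paper's overall architecture (rank additivity for (1)(a), finite-to-one-ness of $[n]$ for (1)(b), Lemma~\ref{lem:typeZIndec} plus a torsion analysis for (2)(b)), and parts (1)(b) and (2)(a) are sound, as is your argument that $S_{tors}\not\leq H(p)$ produces infinitely many completions of the long type. But two steps fail, and they fail on exactly the subtleties this lemma exists to handle. In (1)(a), your opening claim ``$X$ is free iff $\RM(\Sigma(X^d))=d$'' is false, and the paper warns against precisely this in the paragraph immediately preceding the lemma. Freeness is a condition on the pushforward type $\Sigma_*p^{(d)}$, not on the definable image set: for $S=S_1\times S_2$ with $\RM(S_1)=2$, $\RM(S_2)=1$ and $X=(S_1\times\{0\})\cup(\{0\}\times S_2)$, one has $\RDM(X)=(2,1)$ and $\Sigma(X^3)=S$, yet $H(p)=S_1\times\{0\}$, so $X$ is not free. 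Correspondingly, your fibre formula $\RM(\Sigma^{-1}(a)\cap X^d)=\RM(X^d)-\RM(\Sigma_*p^{(d)})$ is valid for $a\models\Sigma_*p^{(d)}$ but not for $a$ generic in the set $\Sigma(X^d)$: in this example the fibre over a generic point of $S$ has rank $2$, not $4$. So the implication ``fibre condition generic $\Rightarrow$ free'' cannot pass through your intermediate claim. The paper's argument works with a generic $a\in S$ directly: by additivity of Morley rank ($\RM=\U$ by Fact~\ref{fact:Lascar}), the fibre over generic $a\in S$ has rank $dk-d$ iff it contains a point generic in $X^d$, iff $a\models\Sigma_*p^{(d)}$, iff $\Sigma_*p^{(d)}$ is the (unique) generic type of $S$.

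In (2)(b), your construction of $p'$ is also flawed: choosing $t$ with $[d]t$ in the coset $c+H(p)$ and setting $p'=p-t$ only guarantees that realisations of $p'$ lie in a coset $\xi+H(p)$ with $[d]\xi\in H(p)$, i.e.\ $\xi$ is $d$-torsion modulo $H(p)$; since $S/H(p)$ has nontrivial torsion exactly in the problematic case $S_{tors}\not\leq H(p)$, your $p'$ need not extend $H(p)$, and then the third bullet does not even make sense for it. The paper's choice $p':=\tp(a_2-a_1/Ma_1)$ with $(a_1,a_2)\models p^{(2)}|M$ avoids this, because a difference of two realisations of $p$ automatically lies in $H(p)$. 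Finally, the converse direction that you defer as the ``technical heart'' (that $H(p)\geq S_{tors}$ together with Kummer-genericity of $p'$ in $H(p)$ gives Kummer-genericity in $S$) requires no Tate-module splitting or direct-summand analysis: since $H(p)$ is connected, hence divisible, and $S[n]\leq H(p)$ for all $n$, one has $[n]^{-1}(b)\subseteq H(p)$ for every $b\in H(p)$, so the completions of the long type computed in $S$ and in $H(p)$ coincide level by level. This also transfers almost-Kummer-genericity; there your direct infinitude argument is in fact a mild simplification of the paper's route, which reduces the almost-Kummer-generic case to the Kummer-generic one via the completions of $p([N]x)$ and the observation that these have the same stabiliser $H(p)$.
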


\begin{proof}
  1.a) Let $p$ be the generic type of $X$. Then $p^{(d)}$ is the unique
  generic type of $X^d$, and $X$ is free iff $\Sigma_*p^{(d)}$ is the generic
  type of $S$ iff for generic $a$ in $S$, $\RM(\Sigma^{-1}(a)\cap X^d)=dk-d$.
  (This uses the additivity of the Morley rank.) The result follows.

  1.b) Note that since $[n]$ is finite-to-one, an element $a\in [n]^{-1}(X)$
  is generic iff $na$ is generic in $X$ iff $na\models p$. From this, one may
  conclude.

  2.a) Left to the reader.

  2.b) Let $(a_1,a_2)\models p^{(2)}\mid M$. It is routine to check that
  $p':=\tp(a_2-a_1/Ma_1)$ is as claimed. Since $p'$ is a translate of (a
  non-forking extrension of) $p$, the first two items are equivalent by 2.a).

  Now suppose $p'$ is Kummer-generic in $S$. Then, clearly $p'$ is
  Kummer-generic in $H(p')=H(p)$ as well. Moreover, if $\zeta\in
  S_{tors}\setminus H(p)$, with $n\zeta=0$, and $a'\models p'$, there is
  $c'\in H(p)$ such that $[n]c'=a'$, since $H(p)$ is connected and thus
  divisible. Then, $\tp(c'+\zeta/M)\neq \tp(c'/M)$, since $c'+\zeta\not\in
  H(p)$. This shows that $H(p)\geq S_{tors}$.

  Conversely, assume that  $H(p)\geq S_{tors}$ and $p'$ is Kummer-generic in
  $H(p)$. Then, for every $b\in H(p)$ and every $n\geq1$,
  $[n]^{-1}(b)\subseteq H(p)$, as $S[n]\subseteq H(p)$. So Kummer-genericity
  of $p'$ in $S$ follows.

  We now give the argument for almost Kummer-genericity. If $N\geq1$ and $q$
  is a completion of $\pi(x):=p([N]x)$, then $H(q)=H(p)$. (Indeed, in a
  totally transcendental divisible abelian group, if $p=[N]_*q$, it is easy to
  see that $\Stab(p)=N\Stab(q)$. Moreover, $\Stab(q)=N\Stab(q)$ since
  $\Stab(q)$ is connected. Thus, $\Stab(p)=\Stab(q)$.) Now $p(x)$ is almost
  Kummer generic iff there is $N\geq1$ such that every completion $q$ of the
  partial type $\pi(x):=p([N]x)$ is Kummer-generic. So we conclude by the
  result for Kummer-genericity.
\end{proof}

\begin{theorem}\label{thm:fmr1}
  Let $p$ be a strong type over $A$ extending $S$.

  Assume $p$ is free. Then $p$ is almost Kummer-generic.

  Moreover, $p$ is almost Kummer-generic if and only if $H(p)\geq S_{tors}$.
\end{theorem}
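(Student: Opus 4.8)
The plan is to prove this theorem as the abstract, model-theoretic analogue of Proposition~\ref{prp:main}, mirroring the Galois-theoretic argument of Section~\ref{S:Proof2.3} but with types and automorphisms in place of function-field Galois groups. The key conceptual dictionary is: the rôle of the generic point $a\in S$ is played by a realisation of the generic type of $S$; the rôle of the summation map $\Sigma:X^d\maps S$ being dominant (Claim~\ref{claim:dom}) is played by the freeness hypothesis $H(p)=S$, which by Lemma~\ref{lem:typeZIndec} says precisely that $\Sigma_*p^{(d)}$ is the generic type of $S$; and the rôle of the set $P$ of irreducible components of $\Sigma^{-1}(a)$, on which the absolute Galois group acts transitively, is played by the (finitely many) strong types over $\acleq(a)$ extending the fibre $\Sigma^{-1}(a)\cap(\text{locus of }p^{(d)})$ through a generic realisation $a$ of $S$.

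First I would set up the Kummer pairing abstractly. Fix $a\models q_S$ the generic type of $S$ over a model $M$, and choose a compatible system $(a_n)$ with $[m]a_{nm}=a_n$, $a_1=a$, realising the long partial type. The map $\thetat:\sigma\mapsto(\sigma(a_n)-a_n)_n$ on $\Aut(\acleq(Ma_\bullet)/Ma)$ lands in $T=\invlim_n S[n]$, since the torsion is algebraic over $\emptyset$. Because $S$ is Kummer-generic in itself (trivially: $[n]^{-1}(S)=S$ has a unique generic type), this pairing is onto $T$. Now let $Z=Z_p\leq T$ be the image of the corresponding pairing attached to $p$ itself, so that by the abstract analogue of Lemma~\ref{lem:KgZ} the index $[T:Z]$ governs the number of strong completions of the long type over $p$, and $p$ is Kummer-generic iff $Z=T$, almost Kummer-generic iff $[T:Z]<\infty$.

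The heart of the argument is the analogue of Claim~\ref{clm:maingal}. Take $\btup$ a realisation of $p^{(d)}$, chosen (using freeness, via Lemma~\ref{lem:typeZIndec}) so that $\Sigma\btup=a$ is generic in $S$ over $M$ and $\btup$ is generic over $Ma$ in one of the finitely many components of the fibre; finitely many because $\RM(p^{(d)})-\RM(q_S)$ bounds the fibre dimension and $\omega$-stability bounds the number of top-rank strong types (using Fact~\ref{fact:Lascar}, that Morley rank is finite, definable, and equals $\U$-rank, so additivity applies). For $\sigma$ fixing the strong type of $\btup$ over $Ma$, lift to $\tau$ and compute, exactly as in Claim~\ref{clm:maingal},
\[\thetat(\sigma)=\bigl(\Sigma(\tau\btup_n-\btup_n)\bigr)_n\in\underbrace{Z+\cdots+Z}_{d}=Z.\]
Thus $\thetat$ induces a surjection $\Aut(\acleq(Ma)/Ma)\twoheadrightarrow T/Z$ whose kernel contains the stabiliser of the chosen component, so $[T:Z]$ divides the (finite) number of components. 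This proves almost Kummer-genericity under freeness. The main obstacle I anticipate is not this computation but the bookkeeping of strong types versus components in $\acleq$ — making precise that ``irreducible component of the fibre'' translates to ``strong type over $\acleq(Ma)$ of maximal rank'', and that the finitely-many count is exactly $|\Aut/\Stab|$ for the stabiliser of a component — since in the model-theoretic setting one must work in $T^{eq}$ and be careful that the relevant Galois group (the group of elementary permutations of $\acleq(Ma)$ over $Ma$) acts transitively on these strong types and finitely.

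Finally, for the ``moreover'' equivalence, one direction is immediate: if $p$ is almost Kummer-generic then $H(p)\geq S_{tors}$ by the contrapositive of the argument already recorded in the proof of Lemma~\ref{lem:free-Kg}(2.b) — if some torsion $\zeta\notin H(p)$ existed, dividing $a\models p'$ by $[n]$ inside the connected (hence divisible) group $H(p)$ and translating by $\zeta$ produces a distinct completion, and iterating yields infinitely many, contradicting almost Kummer-genericity. For the converse, assuming $H(p)\geq S_{tors}$, I would pass via Lemma~\ref{lem:free-Kg}(2.b) to the translate $p'$ of a non-forking extension of $p$ which extends $H(p)=H(p')$ and is \emph{free} inside the group $H(p)$; the first part of the theorem, applied internally to the finite-Morley-rank divisible abelian group $H(p)$, shows $p'$ is almost Kummer-generic in $H(p)$, and since $S_{tors}\subseteq H(p)$ gives $[n]^{-1}(b)\subseteq H(p)$ for $b\in H(p)$, Lemma~\ref{lem:free-Kg}(2.b) transfers this to almost Kummer-genericity in $S$, whence for $p$ itself.
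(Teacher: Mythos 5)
Your proposal is correct and takes essentially the same route as the paper's own proof: realise $\btup\models p^{(d)}$, use freeness to make $a=\Sigma(\btup)$ generic, identify the finitely many strong types over $\acleq(Ma)$ extending $\tp(\btup/Ma)$ (the generic types of $X^d\cap\Sigma^{-1}(a)$, finite by Morley degree via Fact~\ref{fact:Lascar} and additivity of rank), and rerun the Kummer-pairing argument of Claim~\ref{clm:maingal} with $\operatorname{Stab}_{\Gt}(V)$ replaced by the stabiliser of one such strong type, so that $[T:Z]$ divides their number. Your treatment of the ``moreover'' clause—forward direction from Lemma~\ref{lem:free-Kg}(2)(b), backward direction by applying the first part internally to the divisible group $H(p)$ and transferring back via that lemma—is precisely what the paper's one-line citation of Lemma~\ref{lem:free-Kg}(2)(b) unpacks to.
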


\begin{proof}
  Assume $p$ is free. Replacing $p$ by a non-forking extension if necessary,
  we may assume that $A=M\models T$, and that $S$ is defined over $M$.

  Let $\btup\models p^{(d)}$, and put $a=\Sigma(\btup)$. Since $p$ is free,
  $a$ is generic in $S$ over $M$. Let $P$ be the set of types over
  $\acleq(Ma)$ extending $\tp(\btup/Ma)$. Let $X$ be an $M$-definable set with
  unique generic type $p$. By Fact \ref{fact:Lascar}, Morley rank is finite
  and additive. It follows that $\tp(\btup/Ma)$ is the unique type in
  $X^d\cap\Sigma^{-1}(a)$ over $Ma$ of maximal Morley rank, and so $P$ is
  equal to the set of generic types in $X^d\cap\Sigma^{-1}(a)$ over
  $\acleq(Ma)$. In particular, $P$ is a finite set, say
  $P=\{q_1,\ldots,q_m\}$, where $m$ is the Morley degree of
  $X^d\cap\Sigma^{-1}(a)$.

  Let $\Gt=\Gal(Ma)$ be the (absolute) Galois group of $Ma$, i.e.\ the set of
  elementary permutations of $\acleq(Ma)$ fixing $Ma$ pointwise. The group
  $\Gt$ acts transitively on $P$. The proof of Claim \ref{clm:maingal} in the
  proof of Proposition \ref{prp:main} goes through in this context (with
  $\Stab_{\Gt}(V)$ replaced by $\Stab_{\Gt}(q_1)$), and it yields the theorem
  exactly as in the semiabelian case.

  The ``moreover'' clause follows by Lemma~\ref{lem:free-Kg}(2)(b).
\end{proof}

We obtain an analogous result in the type-definable case. We refer to
\cite[Section 2.3]{BeBoPiSSharp} for the definition of relative Morley rank.

\begin{theorem}\label{thm:type-definableThm1}
  Theorem \ref{thm:fmr1} also holds in the case that $S$ is a type-definable
  divisible abelian group of finite relative Morley rank.
\end{theorem}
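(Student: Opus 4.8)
The plan is to adapt the proof of Theorem~\ref{thm:fmr1} to the type-definable setting, carefully replacing absolute Morley rank by relative Morley rank throughout. The key structural inputs for Theorem~\ref{thm:fmr1} were: finiteness and additivity of Morley rank (via Fact~\ref{fact:Lascar}), the indecomposability machinery of Lemma~\ref{lem:typeZIndec}, and the Galois-theoretic argument of Claim~\ref{clm:maingal} carried out over $\acleq(Ma)$. So first I would check that each of these has a relative analogue. For relative Morley rank on a type-definable group of finite relative Morley rank, the reference \cite{BeBoPiSSharp} provides the corresponding rank theory; the crucial facts I would need are that relative Morley rank is finite, additive along the summation map, and that it computes genericity of the sum of independent realisations in the appropriate sense.

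Concretely, I would reduce to $A=M\models T$ with $S$ (that is, its defining partial type) over $M$, take $\btup\models p^{(d)}$ and set $a=\Sigma(\btup)$; freeness of $p$ (now meaning $H(p)=S$, with $H(p)$ produced by the relative version of Lemma~\ref{lem:typeZIndec}) gives that $a$ is generic in $S$ over $M$. The set $P$ of types over $\acleq(Ma)$ extending $\tp(\btup/Ma)$ should again be exactly the set of generic types, in the relative-rank sense, of the relatively definable set $X^d\cap\Sigma^{-1}(a)$, and hence finite. Then $\Gt=\Gal(Ma)$ acts transitively on $P$, and the computation in Claim~\ref{clm:maingal} — expressing $\thetat(\sigma)$ for $\sigma\in\Stab_{\Gt}(q_1)$ as a sum of $d$ elements of $Z$ — is purely group-theoretic once $\sigma$ is extended to an element of $\Gal(Ma\btup)$, so it transfers verbatim. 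This yields that $[T:Z]$ is finite, giving almost Kummer-genericity; the ``moreover'' clause follows from the relative analogue of Lemma~\ref{lem:free-Kg}(2)(b).

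The main obstacle I anticipate is verifying that the rank-theoretic lemmas genuinely go through with relative Morley rank rather than just asserting it. In particular, Lemma~\ref{lem:typeZIndec} relied on Ziegler's theorem \cite{Zie04} on pairwise-independent triples summing to zero, and on additivity of rank; I would need the relative versions of indecomposability and of the stabiliser-equals-$H(p)$ conclusion to hold in the type-definable group, which is where \cite{BeBoPiSSharp} must be invoked carefully. The finiteness of the Galois orbit $P$ also hinges on the relatively definable set $X^d\cap\Sigma^{-1}(a)$ having finite relative Morley degree, so I would confirm that the relative rank theory supplies a well-behaved notion of relative Morley degree and that generic types correspond to it exactly as in the finite-rank case. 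Once these foundational transfers are in place, the Galois argument itself requires essentially no change, so the substance of the work lies entirely in the relative rank bookkeeping.
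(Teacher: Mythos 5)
Your proposal is correct and is essentially the paper's own argument: reduce to $A=M$, observe that $Y=X^d\cap\Sigma^{-1}(a)$ is relatively $Ma$-definable and hence has finite relative Morley degree, so the set $P$ of generic types of $Y$ over $\acleq(Ma)$ is finite, and then run the Galois computation of Claim~\ref{clm:maingal} unchanged. The only divergence is that what you flag as your main obstacle --- establishing a relative version of Lemma~\ref{lem:typeZIndec} via Ziegler's theorem --- is sidestepped in the paper for the principal clause: freeness is simply read in its equivalent form ``$\Sigma_*p^{(d)}$ is the generic type of $S$'', so genericity of $a=\Sigma(\btup)$ holds by hypothesis rather than via any indecomposability argument, and that machinery is only implicitly needed for the ``moreover'' clause involving $H(p)\geq S_{\mathrm{tors}}$.
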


\begin{proof}
  Say $S$ is of finite relative Morley rank $d$, and let $p$ be a strong type
  over $A=M$ extending $S$. Assume $p$ is free (i.e. $\Sigma_*p^{(d)}$ is the
  generic type of $S$).

  Let $X\subseteq S$ be a relatively definable subset of $S$ such that $p$ is
  the unique generic type of $X$. Then, for $a$ generic in $S$ over $M$,
  $Y=X^d\cap\Sigma^{-1}(a)$ is a relatively $Ma$-definable subset of $S^d$, so
  in particular the set $P$ of generic types in $Y$ over $\acleq(Ma)$ is
  finite (equal to the relative Morley degree of $Y$). We finish as in the
  proof of Theorem \ref{thm:fmr1}.
\end{proof}

We now state a uniform version of Theorem \ref{thm:fmr1}.

\begin{theorem}\label{thm:uniformThm1}
  Suppose $T$ is almost $\aleph_1$-categorical and $(S_t)_{t\in\cal T}$ is a
  uniformly definable family of divisible abelian groups in $T$. Let
  $(X_t)_{t\in\cal T}$ be a definable family of Morley degree 1 sets, with
  $X_t$ a free subset of $S_t$ for every $t\in\cal T$. Let $p_t$ be the
  generic global type of $X_t$.

  Then there is $N\in\N$ such that for every $t\in\cal T$, the partial long
  type $\{ [m]x_{mn}=x_n  \mid m,n\in\N \}\cup p_t(x_1)$ has at most $N$
  completions to global types.
\end{theorem}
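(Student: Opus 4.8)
The plan is to reduce the statement to a uniform bound on Morley degrees and then invoke Lemma~\ref{lem:bddDegree}. Recall that the proof of Theorem~\ref{thm:fmr1}, via the transferred Claim~\ref{clm:maingal}, establishes the following for a single free $p$: fixing a model $M$ over which everything is defined and taking $\btup\models p^{(d)}$ with $a=\Sigma(\btup)$ generic in $S$ (this genericity being exactly the content of freeness), the set $P$ of generic types of $X^d\cap\Sigma^{-1}(a)$ over $\acleq(Ma)$ is finite of size $\DM(X^d\cap\Sigma^{-1}(a))$, and the number of strong completions of the long type is bounded by $|P|$. Thus it suffices to bound $\DM(X_t^d\cap\Sigma_t^{-1}(a))$ uniformly in $t$ (for $a$ generic in $S_t$, though a bound valid for all $a$ will do just as well).

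Next I set up the relevant definable family. Writing $\chi(x;t)$ for the formula defining $X_t\subseteq S_t$ and $\Sigma_t$ for the summation map of $S_t$, the sets
\[
  Y_{(t,a)} := X_t^d\cap\Sigma_t^{-1}(a) = \{\xtup \mid \textstyle\bigwedge_{i\leq d}\chi(x_i;t)\wedge\Sigma_t(\xtup)=a\}
\]
form a single uniformly definable family with parameter $(t,a)$. Since $T$ is almost $\aleph_1$-categorical, Lemma~\ref{lem:bddDegree} applies to this family and yields $N\in\N$ with $\DM(Y_{(t,a)})\leq N$ for all values of the parameter $(t,a)$; in particular this holds for $a$ generic in $S_t$.

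Finally I conclude. Fix $t\in\mathcal{T}$. As $X_t$ is free by hypothesis, the argument of Theorem~\ref{thm:fmr1} applies to $p_t$: taking $a$ generic in $S_t$, the number of completions of $\{[m]x_{mn}=x_n\mid m,n\in\N\}\cup p_t(x_1)$ to global types is bounded by $|P_t|=\DM(Y_{(t,a)})\leq N$. Since $N$ does not depend on $t$, this is the desired uniform bound. The one point requiring care is that the passage to a non-forking extension and the choice of a generic $a$ in the proof of Theorem~\ref{thm:fmr1} do not disturb uniformity; this is automatic, because Lemma~\ref{lem:bddDegree} bounds the Morley degree over \emph{all} parameter values, while the Galois-theoretic bound of Claim~\ref{clm:maingal} is the identical argument for every $t$, its only $t$-dependent input being the now uniformly bounded Morley degree $|P_t|$. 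I expect the main (and essentially bookkeeping) obstacle to be making this last uniformity argument precise rather than any genuinely new mathematical idea, since all the structural work has already been done in Theorem~\ref{thm:fmr1} and Lemma~\ref{lem:bddDegree}.
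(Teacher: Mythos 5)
Your proposal is correct and is essentially the paper's own proof: reduce to a uniform bound on the Morley degree of the fibres $X_t^d\cap\Sigma_t^{-1}(a)$ via Lemma~\ref{lem:bddDegree}, then conclude by the (parameter-independent) Galois-theoretic argument of Theorem~\ref{thm:fmr1}. The one point the paper makes explicitly that you elide is that $\RM(S_t)$ could a priori vary with $t$, so that your family $Y_{(t,a)}$ would not have a fixed arity; by definability of Morley rank (Fact~\ref{fact:PillayPong}) one may assume $\RM(S_t)=d$ is constant on $\cal T$, after which your argument goes through verbatim.
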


\begin{proof}
  Since Morley rank is definable in $T$ (Fact \ref{fact:PillayPong}), we may
  assume that $\RM(S_t)=d$ for all $t\in\cal T$. Let
  $\Sigma_t:S_t^d\rightarrow S_t$ be the summation map. We infer from Lemma
  \ref{lem:bddDegree} that $\DM(X_t^d\cap\Sigma_t^{-1}(a))$ is bounded. The
  statement follows, by the proof of Theorem \ref{thm:fmr1}.
\end{proof}

\begin{theorem}\label{thm:fmr2}
  Suppose $T$ is $\omega$-stable with the DMP and $(S_t)_{t\in\cal T}$ is a
  uniformly definable family of divisible abelian groups of finite Morley rank
  in $T$. Let $(X_t)_{t\in\cal T}$ be a definable family of Morley degree 1
  sets, with $X_t$ a subset of $S_t$ for every $t\in\cal T$. Let $p_t$ be the
  generic global type of $X_t$.
  Then
  \begin{enumerate}[(i)]
    \item $\{ t\in\cal T \,|\, \textrm{$p_t$ is free and Kummer-generic in
      $S_t$} \}$ is definable;
    \item the set of $t$ such that some translate of $p_t$ is free and
      Kummer-generic in $H(p_t)$ is definable.
    \item Suppose that the family $(S_t)_{t\in\cal T}$ is constant (equal to
      $S$). Then the set $\{ t\in \cal T \,|\, \textrm{$p_t$ is Kummer-generic
      in $S$} \}$ is definable.
  \end{enumerate}
\end{theorem}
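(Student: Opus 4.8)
The plan is to prove (iii) by first establishing (i), then deriving (ii) from (i), and finally reading (iii) off (ii) together with the structural description in Lemma~\ref{lem:free-Kg}(2). Throughout I use that $T$ is $\omega$-stable with the DMP, so that Morley rank and Morley degree are both definable on the formulas defining subsets of powers of the finite Morley rank groups $S_t$. Partitioning $\mathcal{T}$ according to the (definable, finitely many) values of $\RM(S_t)$ and $\RM(X_t)$ and taking a finite union at the end, I may assume $\RM(S_t)=d$ and $\RM(X_t)=k$ are constant, so that $\RDM(X_t)=(k,1)$, and I work with the summation maps $\Sigma_t:S_t^d\maps S_t$ supplied by the uniform group structure.

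For (i), freeness is definable: by Lemma~\ref{lem:free-Kg}(1)(a), $X_t$ is free iff $\{a\in S_t \mid \RM(\Sigma_t^{-1}(a)\cap X_t^d)=dk-d\}$ is generic in $S_t$, which is a definable condition on $t$ by definability of Morley rank. It then remains to decide Kummer-genericity uniformly over the free $t$, and here I reduce to finitely many primes. The proof of Lemma~\ref{lem:Kgprimes} uses only that $T_{S_t}\isom\prod_l\Z_l^{r_l}$; this persists, since for $S_t$ divisible of finite Morley rank the $l$-primary torsion is a finite sum of Pr\"{u}fer groups and $\invlim_k S_t[l^k]\isom\Z_l^{r_l}$. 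Hence $X_t$ is Kummer-generic iff $Z_{X_t}+lT_{S_t}=T_{S_t}$ for every prime $l$, which by Lemma~\ref{lem:free-Kg}(1)(b) is exactly $\DM([l]^{-1}(X_t))=1$; and, writing $m_t:=[T_{S_t}:Z_{X_t}]$, the computation in Lemma~\ref{lem:Kgprimes} gives $Z_{X_t}+lT_{S_t}=T_{S_t}$ iff $l\nmid m_t$. Thus a free $X_t$ is Kummer-generic iff $m_t=1$ iff it is $l$-Kummer-generic for all primes $l\leq m_t$.

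The crux is therefore a uniform bound $m_t\leq N$ over the free $t$; given such an $N$, Kummer-genericity of a free $X_t$ reduces to the finite conjunction $\bigwedge_{l\leq N\text{ prime}}\DM([l]^{-1}(X_t))=1$, each conjunct definable in $t$ by the DMP, and intersecting with the freeness locus yields (i). To obtain $N$, recall from the proof of Claim~\ref{clm:maingal} and Theorem~\ref{thm:fmr1} that $m_t$ divides $|P|$, the Morley degree of the generic fibre $X_t^d\cap\Sigma_t^{-1}(a)$, where $P$ is the set of its generic types over $\acleq(Ma)$ and $\Gt$ acts transitively on $P$. This Morley degree is bounded uniformly: ``$\RM(X_t^d\cap\Sigma_t^{-1}(a))=dk-d$'' is a definable condition on $(t,a)$, on that locus ``$\DM\leq n$'' is a finite disjunction of DMP-conditions and so is definable, and were $\DM$ unbounded there, compactness would produce a fibre of infinite Morley degree, contradicting $\omega$-stability. (This is the analogue of Lemma~\ref{lem:bddDegree}, obtained here directly from the DMP.) I expect this uniform-bound step to be the main obstacle, as it is precisely what forces a \emph{single} finite set of primes to work for all $t$ simultaneously.

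Finally I derive (ii) and (iii). By Lemma~\ref{lem:typeZIndec}, $H(p_t)=\Stab(\Sigma_{t,*}p_t^{(d)})$ is a connected, hence divisible, definable subgroup of $S_t$, uniformly definable in $t$ by definability of stabilisers of definable families of types in $\omega$-stable theories; and by Lemma~\ref{lem:free-Kg}(2)(b) the translate $p'_t$ is the generic type of a uniformly definable Morley degree $1$ set $X'_t\subseteq H(p_t)$ which is free in $H(p_t)$. Applying part (i) to the family $\bigl(H(p_t),X'_t\bigr)$ makes $\{t\mid X'_t\text{ is Kummer-generic in }H(p_t)\}$ definable; since $X'_t$ is always free, this is exactly the set in (ii), Lemma~\ref{lem:free-Kg}(2)(a) guaranteeing independence from the choice of translate. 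For (iii), with $S_t=S$ constant, Lemma~\ref{lem:free-Kg}(2)(b) gives that $p_t$ is Kummer-generic in $S$ iff $H(p_t)\geq S_{tors}$ and $p'_t$ is Kummer-generic in $H(p_t)$; the latter is the definable set from (ii), and the former is definable because $H(p_t)$ is definable and so contains $S_{tors}$ iff it contains the fixed definable subgroup $d(S_{\mathrm{tors}})$, the condition $\forall x\,(x\in d(S_{\mathrm{tors}})\maps x\in H(p_t))$ being first-order in $t$. Thus (iii) is an intersection of two definable sets.
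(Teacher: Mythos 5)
Your proposal is correct and takes essentially the same route as the paper's own proof: for (i), make $\RM(S_t)$ constant and freeness definable via Lemma~\ref{lem:free-Kg}(1)(a), bound the Morley degree of the generic fibre $X_t^d\cap\Sigma_t^{-1}(a)$ using the DMP, feed this into the Galois-theoretic index argument of Theorem~\ref{thm:fmr1} to get a single $N$, and reduce Kummer-genericity to the definable conditions $\DM([l]^{-1}(X_t))=1$ for primes $l\leq N$; then (ii) follows by applying (i) to the translated family $(s+_tX_t)\cap H(p_t)$ inside the uniformly definable stabilisers $H(p_t)$, and (iii) from (ii) via $H(p_t)\geq S_{\mathrm{tors}}\Leftrightarrow H(p_t)\geq d(S_{\mathrm{tors}})$. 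The only differences are expository: you spell out the compactness argument behind the uniform degree bound and the $T_{S_t}\isom\prod_l\Z_l^{r_l}$ structure underlying the reduction to primes dividing the index, details the paper leaves implicit by citing Lemma~\ref{lem:bddDegree}, Theorem~\ref{thm:uniformThm1} and the proof of Claim~\ref{clm:maingal}.
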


\begin{proof}
  Adding parameters to the language if necessary, we may assume the two
  families are defined over $\emptyset$.
  \begin{enumerate}[(i)]
    \item Using the DMP, we may assume that $\RM(S_t)$ is constant, say equal
      to $d$. We infer from Lemma \ref{lem:free-Kg} that freeness of $X_t$ (in
      $S_t$) is a definable condition in $t$, and so we may in addition assume
      that all $X_t$ are free in $S_t$. By the DMP,
      $\DM(X_t^d\cap\Sigma_t^{-1}(a))$ is bounded. As in the proof of Theorem
      \ref{thm:uniformThm1}, we find $N\in\N$ such that for every $t\in\cal
      T$, the partial long type $\{ [m]x_{mn}=x_n \mid m,n\in\N \}\cup
      p_t(x_1)$ has at most $N$ completions to global types. It follows that
      $X_t$ is Kummer generic in $S_t$ iff $\DM([l]^{-1}(X_t))=1$
      for all primes $l\leq N$. The latter is a definable condition by the
      DMP.

    \item By Lemma \ref{lem:typeZIndec}, $H(p_t)$ is equal to the stabiliser
      of $\Sigma_{*}(p_t^{(d)})$, so $(H(p_t))_{t\in\cal T}$ is a definable
      family of (divisible) subgroups of  the $S_t$. It is clear that one
      translate of $p_t$ in $H(p_t)$ is Kummer-generic in $H(p_t)$ if and only
      if all translates of $p_t$ in $H(p_t)$ are Kummer-generic in $H(p_t)$. We
      are done by part (i), since we may consider the family
      $\{Y_{s,t}=(s+_tX_t)\cap H(p_t) \,|\, \,\RM(X_t)=\RM(Y_{s,t}\}$.

    \item By Lemma~\ref{lem:free-Kg}(2)(b), $p_t$ is Kummer generic (in $S$)
      iff a translate of $p_t$ is Kummer generic in $H(p_t)$ and
      $H(p_t)\geq S_{\mathrm{tors}}$. The latter condition is equivalent
      to $H(p_t)\geq d(S_{\mathrm{tors}})$, and so we conclude by (ii).
      \qedhere
  \end{enumerate}
\end{proof}


\section{Interesting examples}
\providecommand{\Ssharp}{S^{\#}}

Finally, using the results of previous sections, we give examples of groups to
which the results of Section~\ref{S:fMR} apply:

\begin{examples}\label{examples}
  \begin{enumerate}[(i)]
    \item Commutative divisible algebraic groups: As mentioned in the
      introduction, Theorems~\ref{thm:fmr1} and \ref{thm:fmr2} slightly
      generalise Theorems~\ref{thm:ag1} and \ref{thm:ag2}, giving results for
      arbitrary commutative divisible algebraic groups rather than just
      semiabelian varieties.
    \item Divisible abelian groups of finite Morley rank in $DCF_0$:
      groups definable in $DCF_0$ have the DMP - this follows from
      Corollary~\ref{cor:TADMP} and the result of Hrushovski that $DCF_0A$
      exists (see \cite{Bus07}). Therefore both Theorems~\ref{thm:fmr1} and
      \ref{thm:fmr2} apply.
    \item Divisible abelian groups interpretable in compact complex
      manifolds: any such group is of finite MR, and by
      Corollary~\ref{cor:CCMDMP} it has the DMP. So again,
      Theorems~\ref{thm:fmr1} and \ref{thm:fmr2} apply.
    \item
      Type-definable groups in the theory $SCF_{p,e}$ of separably closed
      fields of a fixed characteristic $p$ and inseparability degree $e$:
      by \cite{BoDe}, the commutative divisible type-definable groups in
      $SCF_{p,e}$ are (up to definable isomorphism) precisely those of the
      form $\Ssharp := \bigcap_n p^n S$ for $S$ a semiabelian variety.

      \cite[Proposition 3.23]{BeBoPiSSharp} shows that for certain semiabelian
      varieties $S$, in particular those which are split (i.e. a product of a
      torus and an abelian variety), $\Ssharp$ has finite relative MR. Hence
      Theorem~\ref{thm:type-definableThm1} applies.
  \end{enumerate}
\end{examples}

It is known that $SCF_{p,e}A$ exists (\cite{ChatzSCFA}), so it is tempting in
the context of the last example to try to deduce DMP for $\Ssharp$ by
appeal to Corollary~\ref{cor:TADMP}, for those $\Ssharp$ having relative
quantifier elimination; however, it is not clear that existence of
$SCF_{p,e}A$ implies axiomatisability of the generic automorphism for the
induced structure on $\Ssharp$. In an earlier version of the paper, we put
a general version of this as a question, asking:

\begin{question*}
  If $T$ is a stable theory for which $TA$ exists, and if $X$ is a
  type-definable set with relative QE, does $Th(X_{ind})A$ necessarily
  exist?
\end{question*}

The general answer to this question is no, as is shown by the following
example due to Nick Ramsey.

\begin{example}
  Let $\L$ be the language consisting of a binary relation $E$ and constant
  symbols $c_{i,n}$ for $1\leq i\leq n<\omega$. Let $T$ be the $\L$-theory
  axiomatised by:
  \begin{itemize}
    \item $E$ is an equivalence relation with infinitely many classes, and
      every class is infinite;
    \item the $c_{i,n}$ are pairwise distinct, and $E(c_{i,n},c_{j,m})$ holds
      iff $n=m$.
  \end{itemize}
  $T$ is complete with QE, has finite additive Morley rank and the DMP. So
  $TA$ exists (see \cite{ChPi98}).

  For $n\geq1$, consider $\phi_n(x):= E(x,c_{n,n})\rightarrow \bigvee_{i\leq
  n}x=c_{i,n}$, and let $X$ be the type-definable set given by
  $\{\phi_n\,\mid\, n\geq1\}$. The induced structure on $X$ is that of an
  equivalence relation with exactly one equivalence class for every $n\geq1$
  whose elements are named by constants. In particular, $X$ has relative QE
  and $X_{ind}$ is stable with the finite cover property. By an observation of
  Kudaibergenov (see \cite[Fact 3.5]{Kik00}), it follows that $Th(X_{ind})A$ does not exist.

  \smallskip
  In a similar fashion, Ramsey obtains an example where $X$ has relative QE,
  $X_{ind}$ is nfcp but $Th(X_{ind})A$ does not exist.
\end{example}

Finally, let us mention (see the following remark) that there is another way
of viewing the notion of Kummer-genericity for subvarieties of a semiabelian
variety in a more abstract manner. This reformulation shows that the main
ideas of the present paper are not bound to the presence of an underlying
(commutative) group. It is then reasonable to ask whether our results extend
to more general expansions of $\omega$-stable (finite rank) theories, e.g. in
the context of Shimura varieties. In the model-theoretic study of the
$j$-function, the relationship between affine $n$-space (with its structure as
an algebraic variety) and its reduct given by the Hecke correspondences plays
an important role, and there are strong similarities to the semiabelian
context. Forthcoming work of Adam Harris is expected to be relevant to this
case.

\begin{remark}
  Suppose $S$ is a semiabelian variety defined over $K=K^{alg}$. Let $T_1$ be
  the theory of the structure $S_1$ with underlying set $S(K)$, and with a
  predicate for every algebraic subvariety (defined over $K$) of some
  cartesian power of $S$. Let $T_0$ be the reduct of $T_1$, with predicates
  only for algebraic subgroups of cartesian powers of $S$. Let $T_i$ be ${\cal
  L}_i$-theories, for $i=0,1$, and put $S_0=S_1\res_{{\cal L}_0}$. (Note that
  $S_0$ is an abelian structure, so in particular $T_0$ is one-based.)

  Let $X\subseteq S$ be an irreducible variety defined over
  $L=L^{alg}\supseteq K$, and let $p_1$ be the generic ${\cal L}_1$-type of
  $X$ over $S(L)$. Then $X$ is free iff $p_1\res_{{\cal L}_0}$ is the (unique)
  generic ${\cal L}_0$-type over $S(L)$, and $X$ is Kummer-generic iff it is
  free and for $b$ realising $p_1$, the natural map of absolute Galois groups
  $\Gal_{T_1}(Lb)\maps \Gal_{T_0}(Lb)$ is a surjection.
\end{remark}

\bibliography{kg}

\end{document}